\theoremstyle{plain}
\newtheorem{theorem}{Theorem}[section]
\newtheorem{corollary}[theorem]{Corollary}
\newtheorem{lemma}[theorem]{Lemma}
\newtheorem{proposition}[theorem]{Proposition}
\newtheorem{conjecture}[theorem]{Conjecture}
\newtheorem{fact}[theorem]{Fact}
\newtheorem*{claim}{Claim}
\newtheorem{nclaim}{Claim}
\newtheorem*{theorem*}{Theorem}
\theoremstyle{definition}
\newtheorem{definition}[theorem]{Definition}
\newtheorem{example}[theorem]{Example}
\theoremstyle{remark}
\newtheorem*{remark}{Remark}
\numberwithin{equation}{section}
\newcommand{\forkindep}[1][]{%
  \mathrel{
    \mathop{
      \vcenter{
        \hbox{\oalign{\noalign{\kern-.3ex}\hfil$\vert$\hfil\cr
              \noalign{\kern-.7ex}
              $\smile$\cr\noalign{\kern-.3ex}}}
      }
    }\displaylimits_{#1}
  }
}
\newenvironment{claimproof}[1][\proofname]
  {%
    \proof[#1]%
  }
  {%
    \endproof%
  }
\newcounter{step}                   
    {\hfill $\clubsuit$             
     \vspace{7pt}\par}
\newcommand{\CL}{{\mathcal L}}
\newcommand{\CK}{{\mathcal K}}
\newcommand{\CO}{{\mathcal O}}
\DeclareMathOperator{\aut}{Aut}
\newcommand{\per}[1]{#1^{1/p^{\infty}}}
\newcommand{\chr}[1][K]{\mathrm{char}(#1)}
\DeclareMathOperator{\Spec}{Spec}
\date{\today}
\author[Y. Halevi]{Yatir Halevi$^*$}
\thanks{$^*$Partially supported by the European Research Council grant 338821, by ISF grant No. 181/16 and ISF grant No. 1382/15.}
\address{$^*$Department of mathematics\\
	Ben Gurion University of the Negev\\
	Be'er Sehva\\
	Israel}
\email{yatirbe@post.bgu.ac.il}
\urladdr{http://ma.huji.ac.il/\textasciitilde yatirh/}
\author[A. Hasson]{Assaf Hasson$^\dagger$}
\thanks{$^\dagger$ Supported by ISF grant No. 181/16}
\address{$^\dagger$Department of mathematics\\
	Ben Gurion University of the Negev\\
	Be'er Sehva\\
	Israel} \email{hassonas@math.bgu.ac.il} \urladdr{http://www.math.bgu.ac.il/\textasciitilde hasson/}
\author[F. Jahnke]{Franziska Jahnke$^\ddagger$}
\address{$^\ddagger$ Westf\"{a}lische Wilhelms-Universit\"{a}t M\"unster \\
	Institut f\"ur Mathematische Logik und Grundlagenforschung \\
	Einsteinstr. 62, 48149 M\"unster\\ 
	Germany}\email{franziska.jahnke@uni-muenster.de}
\thanks{$^\ddagger$ Funded by the Deutsche Forschungsgemeinschaft (DFG, German Research Foundation) under Germany's Excellence Strategy
EXC 2044--390685587, Mathematics M\"unster: Dynamics--Geometry--Structure and the CRC 878}
\begin{document}
\title{Definable V-topologies, Henselianity and NIP}
\begin{abstract}
We initiate the study of definable V-topolgies and show that there is at most one such V-topology on a t-henselian NIP field.  Equivalently, we show that if $(K,v_1,v_2)$ is a bi-valued NIP field with $v_1$ henselian (resp. t-henselian) then $v_1$ and $v_2$ are comparable (resp. dependent). 

\noindent As a consequence Shelah's conjecture for NIP fields implies the henselianity conjecture for NIP fields. Furthermore, the latter conjecture is proved for any field admitting a henselian valuation with a dp-minimal residue field.  

\noindent We conclude by showing that Shelah's conjecture is equivalent to the statement that any NIP field not contained in the algebraic closure of a finite field is $t$-henselian. 
\end{abstract}

\maketitle

\section{Introduction}
The class of first order theories without the independence property (NIP, for short) is arguably the most important class of theories studied by model theorists at the present time. We study the model theory of NIP fields admitting a henselian valuation (i.e., admitting a non-trivial henselian valuation). This class of fields covers many natural theories of fields, e.g. separably closed fields, real closed fields, $p$-adically closed fields and Hahn fields over any such field. Our work is motivated by the conjecture, attributed to Shelah\footnote{Shelah stated a closely related conjecture for strongly NIP fields. The conjecture stated here is, apparently, folklore.}, that every infinite NIP field is either separably closed, real closed or henselian.

Shelah's conjecture, if true, has far reaching consequences (see \cite{HaHaJa} and references therein for a detailed discussion). Among others, it implies that a wide class of NIP fields admits quantifier elimination in a reasonable language, and that their theories are decidable and explicitly axiomatisable in the spirit of Ax-Kochen \cite{axkochen} and Er\v{s}ov \cite{ersov}. In fact, by an unpublished work of Anscombe and the third author \cite{AnJa}, Shelah's conjecture implies that up to elementary equivalence, the above list of NIP fields is almost exhaustive.

In its full generality Shelah's conjecture is considered our of reach of present techniques. The only instance of the conjecture known is due to Johnson, \cite{johnsonp}, in the special case of NIP fields of dp-rank one (i.e. dp-minimal fields). Johnson's proof brings forth the importance of understanding the definable valuations in NIP fields and the topology they induce on the field. In fact, one of the keys to his proof, which was also proven independently by Jahnke-Simon-Walsberg in \cite{JSW}, is showing that in a dp-minimal valued field, which is not algebraically closed, the topology induced by the valuation is the same as the one induced by any henselian valuation.

Recall that a V-topology on a field is a topology which is induced by either a valuation or an archimedean absolute value. Such a topology is \emph{definable} if there exists a bounded (with repsect to the valuation or absolute value) definable neighbourhood of $0$. Such a set generates the V-topology (see Section \ref{ss:def-V}). A V-topology is t-henselian if it satisfies an appropriate variant of the inverse function theorem, see Section \ref{ss:V-topAG}. 

With this terminology in mind, Johnson's key result from above translates to the fact that every dp-mininal field, which is not algebraically closed, admits a canonical V-topology coinciding with the unique t-henselian topology (i.e. the topology induced by any henselian valuation on the field). The topology constructed by Johnson is, assuming the field is not real closed nor algebraically closed, canonical in the sense that it is the unique definable V-topology on the field.

The main result of the present paper is that the same is true for all t-henselian NIP fields:
\begin{theorem*}[Theorem \ref{T:indep-val=ip}]
Let $K$ be a t-henselian NIP field, possibly with additional structure), in some language expanding the language of rings. Then $K$ admits at most one definable V-topology.
\end{theorem*}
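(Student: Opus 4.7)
The plan is to prove the contrapositive: suppose $K$ is a field on which two distinct definable V-topologies $\tau_1,\tau_2$ coexist, with $\tau_1$ t-henselian, and derive the independence property. Fix definable bounded $\tau_i$-neighborhoods $U_i$ of $0$; these generate the topologies under translates and multiplicative scalings. A standard fact about V-topologies is that two distinct V-topologies on a field share no non-trivial coarsening, so $\tau_1$ and $\tau_2$ are \emph{independent}, and the approximation theorem applies: the image of the diagonal $K\hookrightarrow (K,\tau_1)\times (K,\tau_2)$ is dense in the product topology. It is also harmless to pass to an elementary extension in which $\tau_1$ is witnessed by an actual henselian valuation, since NIP, definability of the two V-topologies, and the independence of $\tau_1,\tau_2$ all transfer.

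The use of t-henselianity is via the topological implicit function theorem: if $f(x,\bar y)\in K[x,\bar y]$ satisfies $f(a,\bar b)=0$ and $\partial_x f(a,\bar b)\ne 0$, then $f(x,\bar c)=0$ has a $\tau_1$-unique root close to $a$ for every $\bar c$ sufficiently $\tau_1$-close to $\bar b$. This furnishes a large supply of definable $\tau_1$-open subsets of $K$ coming from quantifier-free polynomial formulas, and guarantees that membership in $U_1$ (and in its scalar multiples) persists under small $\tau_1$-perturbations.

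The core step is to combine approximation with t-henselianity to shatter an arbitrary finite tuple. Given $(c_i)_{i<n}\subseteq K$ and $S\subseteq \{0,\dots,n-1\}$, the objective is to construct $a_S$ realising the quantifier-free pattern $\{a_S-c_i\in U_1:i\in S\}\cup\{a_S-c_i\notin U_1:i\notin S\}$. Approximation handles the ``in $U_1$'' constraints by placing $a_S$ $\tau_1$-close to each $c_i$ with $i\in S$, while simultaneously keeping $a_S$ in a specified $\tau_2$-ball so that the $\tau_2$-side of the argument does not collapse. For the ``not in $U_1$'' constraints, I would invoke t-henselianity to produce, for each $i\notin S$, a polynomial witness $f_i$ whose roots perturb coordinate-wise under the implicit function theorem, allowing $a_S-c_i$ to be pushed \emph{outside} the $\tau_1$-bounded set $U_1$ independently of what is happening at the other indices. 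The formula $\varphi(x,y)\equiv(x-y\in U_1)$ then shatters $\{c_i:i<n\}$ for every $n$, giving IP and contradicting the NIP hypothesis.

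The main obstacle will be the last ingredient. Approximation is a density statement: it easily accommodates the ``close to'' conditions, but the ``far from $c_i$ in $\tau_1$'' conditions are one-sided and can interact globally across indices. The delicate task is arranging things so that t-henselianity decouples the $i\notin S$ constraints, for instance by choosing a polynomial equation whose solution set is $\tau_1$-open and whose distinct branches witness membership or non-membership in $U_1$ separately at each $c_i$. Making this independence rigorous — essentially realising all $2^n$ truth patterns simultaneously without the conditions interfering — is the technical heart of the theorem and the place where t-henselianity, rather than mere topological independence, is indispensable.
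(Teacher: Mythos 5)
There is a genuine gap, and it is located exactly where you place the ``technical heart'': the combinatorial scheme you propose cannot work, even in principle. The formula you intend to shatter, $\varphi(x,y)\equiv (x-y\in U_1)$, mentions only the topology $\tau_1$. If $U_1$ is sandwiched between $v_1$-balls (as any bounded neighbourhood of $0$ is), then $a_S-c_i\in U_1$ and $a_S-c_j\in U_1$ force $c_i-c_j\in U_1-U_1$, a bounded set; so any two indices placed simultaneously in $S$ must have $\tau_1$-close parameters, and you cannot realise all $2^n$ patterns on a fixed tuple $(c_i)$. Worse, when $\tau_1$ is induced by a valuation the relation $v_1(x-y)>\gamma$ is an equivalence relation, hence stable and in particular NIP, so no amount of cleverness in choosing witnesses will make $\varphi$ have IP. The formula witnessing IP must genuinely compare the two topologies. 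This is what the paper's formula does: it quantifies over the $q$-th roots $z$ of $u+v$ and asserts that every root which is $v_1$-close to $1$ is also $v_2$-close to $1$. The point is that a root of $z^q=a_i+x$ carries two independent ``tags'' --- which $q$-th root of unity it is near in the $\tau_1$-sense, and which it is near in the $\tau_2$-sense --- and only the mismatch between the two tags encodes a subset of $\{1,\dots,n\}$.

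Your instinct about where the difficulty lies (decoupling the $n$ conditions so that all $2^n$ patterns are realised simultaneously) is correct, but the mechanism is not a perturbation of individual polynomial witnesses via the implicit function theorem. In the paper the decoupling is achieved by a strong approximation theorem on the curve $C$ defined by $y_1^q=a_1+x,\dots,y_n^q=a_n+x$: one exhibits a $\tau_1$-open subset $U_1(C)$ (all $y_i$ near $1$ for $v_1$) and, for each $S$, a $\tau_2$-open subset $U_{2,S}(C)$ (where $y_i$ is near $1$ for $v_2$ iff $i\in S$), and proves that these intersect in $C(K)$. That intersection statement (Proposition \ref{P:approx-for-curves}) is where t-henselianity of $\tau_1$ is really used --- via the implicit function theorem on the curve, Riemann--Roch to produce a degree-$m$ function with simple zeros, and passage to the V-closure of $\tau_2$ --- together with the classical approximation theorem for independent V-topologies on the base field. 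Your proposal correctly handles the reduction steps (contrapositive, independence of distinct V-topologies, passing to an elementary extension, and the role of the implicit function theorem in producing definable open sets such as an open neighbourhood of $1$ inside $K^q$), but without replacing $\varphi$ by a formula of the above ``root-tagging'' type and without the curve-level approximation theorem, the argument does not close.
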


In particular if $(K,v_1,v_2)$ is a bi-valued NIP field with $v_1$ t-henselian then $v_1$ and $v_2$ induce the same topology (Corollary \ref{C:hens-incompIP}). On a field which is not separably closed, and assuming $\aleph_0$-saturation, this t-henselian topology is induced by an externally definable valuation (Proposition \ref{P:defV=extdef}). If, in addition, it is not real closed, it is always induced by a definable valuation (without any saturation assumption), see \cite[Theorem 5.2]{JahKoe} or \cite[Theorem 6.19]{dupont}.

Thus, our main result shows, in particular, that Shelah's conjecture implies the existence of a canonical V-topology on any infinite NIP field which is not separably closed. One consequence of the above is that for a real closed field $R$, NIP isolates precisely the henselian valuations on $R$. Namely, for any valuation $v$ on $R$, $(R,v)$  has NIP if and only if $v$ is henselian if and only if the valuation ring corresponding to $v$ is convex with respect to the order (Proposition \ref{P:nip-val-rcf}).

The generalization of the last statement to arbitrary NIP fields is known as the \emph{henselianity conjecture}. Explicitly it states that any NIP \emph{valued field} is henselian (i.e., that if $(K,v)$ has NIP as a valued field then $v$ is henselian). We use the main result to show that if $K$ is a NIP field admitting a henselian valuation $v$ with dp-minimal residue field then $K$ satisfies the henselianity conjecture (Corollary \ref{C:hen-con-dpmin-res}). In particular, any Hahn field over such fields satisfy the henselianity conjecture. We settle the following heuristic common belief.

\begin{theorem*}[Proposition \ref{P:shelah-implies-hens}]
Shelah's conjecture for NIP fields implies the henselianity conjecture.
\end{theorem*}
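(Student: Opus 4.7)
The plan is to reduce to the Shelah trichotomy. Let $(K,v)$ be an NIP valued field; I will show $v$ is henselian. Its reduct $K$ as a pure field is still NIP, so by Shelah's conjecture $K$ is either separably closed, real closed, or admits a non-trivial henselian valuation. If $K$ is separably closed, every valuation on $K$ is henselian (the extension of any valuation to the purely inseparable $K^{alg}$ is unique). If $K$ is real closed, Proposition \ref{P:nip-val-rcf} applies directly. So I may assume $K$ is neither separably closed nor real closed, but carries some non-trivial henselian valuation, and hence is t-henselian.

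In this remaining case, \cite[Theorem 5.2]{JahKoe} provides a $\emptyset$-definable (in the language of rings) henselian valuation on $K$; I take $v_1$ to be the canonical henselian valuation $v_K$. Since $v_1$ is already definable from the pure field structure, $(K, v_1, v)$ is a definitional expansion of $(K,v)$, hence still NIP, with $v_1$ henselian. Corollary \ref{C:hens-incompIP} then forces $v$ and $v_1$ to be comparable. If $v$ coarsens $v_1$ we are done, since any coarsening of a henselian valuation is henselian. The delicate case is when $v = w \circ v_1$ strictly refines $v_1$, with $w$ a non-trivial valuation on $k_{v_1}$; by the composition theorem for henselian valuations, henselianity of $v$ reduces to henselianity of $w$ on $k_{v_1}$.

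To handle this last case I iterate the argument on the NIP pair $(k_{v_1}, w)$, which is interpretable in $(K,v,v_1)$. The key point is that $v_1$ was chosen to be canonical: either $k_{v_1}$ is separably closed (and then $w$ is automatically henselian), or $v_1 = v_K^1$ is the finest henselian valuation of $K$ with non-separably-closed residue field, in which case the maximality of $v_K^1$ ensures that every henselian valuation appearing on $k_{v_1}$ has separably closed residue field (otherwise composition with $v_1$ would contradict the maximality). Running the Shelah trichotomy on $k_{v_1}$, at most one further application of Corollary \ref{C:hens-incompIP} combined with this maximality property lets the recursion bottom out: either one lands on a separably-closed or real-closed residue field and concludes as in the base cases, or one extracts a canonical henselian valuation on $k_{v_1}$ whose residue is separably closed, so that in either sub-case of the comparability dichotomy $w$ is forced to be henselian. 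The main obstacle is exactly this bounding of the recursion; without the canonical choice of $v_1$, \emph{a priori} one would be facing a potentially transfinite descent down residue fields of valuations of arbitrary rank.
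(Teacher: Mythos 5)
Your overall strategy is the same as the paper's: reduce via the trichotomy to the t-henselian case, pin down a distinguished henselian valuation $v_1$, force comparability with $v$ via Corollary \ref{C:hens-incompIP}, and then pass to the residue field, where a canonicity/maximality property stops the descent. However, there is a genuine gap at the first step of the main case: you take $v_1$ to be the canonical henselian valuation $v_K$ and assert that \cite[Theorem 5.2]{JahKoe} makes it $\emptyset$-definable in the ring language. That theorem produces a definable valuation \emph{inducing the t-henselian topology}; such a valuation need not be henselian and in particular need not be $v_K$. Worse, $v_K$ itself is in general not even externally definable: \cite[Theorem A]{JahNIP} yields $K^{sh}$-definability of a henselian valuation only when its residue field is \emph{not} separably closed, and precisely in the case $Kv_K$ separably closed no definability is available. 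Without (external) definability of $v_1$ in $(K,v)$ you cannot conclude that $(K,v,v_1)$ is NIP, so Corollary \ref{C:hens-incompIP} --- the engine producing comparability --- cannot be invoked. You also cannot appeal to ``coarsenings of $v$ are externally definable'' at this point, since comparability of $v_1$ with $v$ is exactly what is to be proved.

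The paper circumvents this by a different choice of distinguished valuation: $\mathcal{O}_w$ is the intersection of all henselian valuation rings that are \emph{externally definable in $(K,v)$} (this family is nonempty by Proposition \ref{P:jahnke}, which under Shelah's conjecture supplies a ring-language-definable henselian valuation); it is henselian because the intersectands form a chain by Corollary \ref{C:hens-incompIP}, its comparability with $v$ is established by a separate contradiction argument, and its residue field provably carries no non-trivial definable henselian valuation, so Proposition \ref{P:jahnke}(2) forces $Kw$ to be separably or real closed, whence $\bar v$ is henselian. Your idea for terminating the recursion (maximality of the canonical valuation leaves no further henselian structure on the residue field to descend into) is the right one and is morally what the paper does; but as written it inherits the same definability gap one level down, since the ``canonical henselian valuation on $k_{v_1}$ with separably closed residue field'' that you want to compare with $w$ is again not known to be externally definable in $(k_{v_1},w)$.
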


The proof of the main result generalizes methods used by Johson \cite[Chapter 11]{johnson}, Montenegro \cite{montenegro} and Duret \cite{duret} by proving the following strong approximation theorem on curves.

\begin{theorem*}[Proposition \ref{P:approx-for-curves}]
Let $K$ be a perfect field and $\tau_1,\tau_2$ two distinct V-topologies on $K$ with $\tau_1$ t-henselian. Let $C$ be a geometrically integral separated curve over $K$, $X\in \tau_1$ and 
$Y\in \tau_2$ 
infinite subsets of $C(K)$. Then $X\cap Y$ is nonempty.
\end{theorem*}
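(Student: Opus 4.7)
The plan is to adapt the strong-approximation-on-curves template of Johnson, Montenegro and Duret: use the t-henselianity of $\tau_1$ to parametrize locally a piece of $X$ by a $\tau_1$-open ball of $K$, and then apply the classical approximation theorem for distinct V-topologies on $K$ in order to hit the $\tau_2$-open set $Y$.

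First, I would reduce to the case where $C$ is smooth affine: deleting the finite singular locus and fixing an affine chart removes only finitely many points from $X$ and $Y$, so both remain infinite. Fix a closed embedding $C \hookrightarrow \mathbb{A}^N$, choose a smooth point $p_0 \in X$, and, after a generic linear change of coordinates on $\mathbb{A}^N$, arrange that the first coordinate function $t$ is a local parameter at $p_0$, so that $t \colon C \to \mathbb{A}^1$ is a dominant separable morphism unramified at $p_0$. The t-henselianity of $\tau_1$ then provides a $\tau_1$-continuous section $\sigma \colon B \to C(K)$ of $t$ through $p_0$, where $B \subseteq K$ is a $\tau_1$-open ball around $t(p_0)$ and, after shrinking $B$, we have $\sigma(B) \subseteq X$. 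Symmetrically, since $Y$ is infinite and the ramification locus of $t$ is finite, pick $p_1 \in Y$ with $t$ unramified at $p_1$, and a $\tau_2$-open box $W = \prod_j W_j \subseteq K^N$ around $p_1$ with $W \cap C(K) \subseteq Y$. The task then reduces to finding $s \in B$ with $\sigma(s) \in W$; the first coordinate $\sigma_1(s) = s$ is handled by the classical approximation theorem for the distinct V-topologies $\tau_1,\tau_2$ on $K$, which supplies $s \in B \cap W_1$.

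The main obstacle is matching the remaining coordinates: $\sigma(s)$ is by construction the $\tau_1$-lift of $s$ through $p_0$, whereas membership in $W$ requires it to be the $\tau_2$-lift through $p_1$. To address this I would pass to the $\tau_2$-completion $\hat{K}_2$ of $K$, which (being a complete V-topological field) is t-henselian and therefore carries a $\tau_2$-continuous section $\sigma_2$ of $t$ through $p_1$; both $\sigma(s)$ and $\sigma_2(s)$ then appear among the finitely many roots of the minimal polynomial of $K(C)/K(t)$ specialised at $s$. Following the cited works of Johnson, Montenegro and Duret, one applies approximation not only to the first coordinate but to a refined system of $\tau_2$-conditions (via resolvents or trace-type functions on $K(C)/K(t)$ distinguishing the branches) that singles out the correct one, forcing $\sigma(s) = \sigma_2(s)$ and hence $\sigma(s) \in W$. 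The infiniteness of $Y$ is essential here: it supplies enough candidate target points $p_1$ to ensure that some branch is accessible from the $\tau_1$-parametrised arc $\sigma(B) \subseteq X$. This branch-matching, rather than the approximation step itself, is the technical heart of the argument.
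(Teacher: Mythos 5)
Your proposal correctly isolates the branch-matching step as the crux, but it does not resolve it, and the suggested fix (resolvents or trace-type functions ``singling out the correct branch'') cannot work as described. The obstruction is structural: every condition you can impose via the approximation theorem is a condition on the base point $s\in K$ --- a $\tau_1$-open condition intersected with a $\tau_2$-open condition on $s$ (or on values of further rational functions of $s$). But which point of the fibre $t^{-1}(s)$ is the $\tau_1$-lift through $p_0$ and which is the $\tau_2$-lift through $p_1$ is decided by the two \emph{independent} topologies, not by $s$; a $\tau_2$-condition on $s$ or on a resolvent gives no control whatsoever over the $\tau_1$-continuous section $\sigma$, and conversely. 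Moreover, in your setup the remaining $\deg(t)-1$ branches over $B$ need not meet $X$ at all, so even once $s\in B\cap W_1$ is found there is no reason the fibre point selected on the $\tau_2$-side coincides with $\sigma(s)$.

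The paper sidesteps the matching problem rather than attacking it. Using Riemann--Roch (Lemma \ref{L:divisor}) one chooses the covering map $f\colon C\to\mathbb{P}^1$ so that its \emph{entire} divisor of zeros is simple and contained in $X$, say $f^{-1}(0)=\{P_1,\dots,P_m\}$ with $\deg f\le m$. By t-henselianity of $\tau_1$ (via Proposition \ref{P:v-top-implicit-etale}) there are pairwise disjoint $\tau_1$-neighbourhoods $W_j\subseteq X$ of the $P_j$ mapped homeomorphically onto neighbourhoods of $0$; hence for every $y\in U:=\bigcap_j f(W_j)$ the fibre $f^{-1}(y)$ has one point in each $W_j$, and by the degree bound these exhaust the fibre, so \emph{all} preimages of $y$ lie in $X$. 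On the $\tau_2$-side one passes to the V-closure $\widehat K$, uses infiniteness (hence Zariski density) of $Y$ to find an \'etale point of $f$ in $Y(\widehat K)$, and gets a nonempty $\tau_2$-open $V(K)=f(\widehat W)\cap K$. Strong approximation yields $y\in U\cap V(K)$, and whichever preimage the $\tau_2$-side produces is automatically in $X$ --- no branch needs to be matched. This ``force every branch over $U$ into $X$'' device is the idea missing from your argument; without it, or some substitute of comparable strength, the proof does not go through.
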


The topological theme is recapitulated in the the final section. We show that a well behaved interaction between definable sets and open sets is equivalent, without further model theoretic assumptions, to $t$-henselianity. This allows us to deduce the following new (to the best of our knowledge) general characterization of henselianity in terms of $t$-henselianity.

\begin{theorem*}[Theorem \ref{T:t-h}, Proposition \ref{P:t-hen-algext}]
Let $(K,v)$ be a valued field. Then $(K,v)$ is henselian if and only if $(Kv_\Delta,\bar v)$ is t-henselian for every coarsening $v_\Delta$ of $v$.

Moreover, if $K$ is not real closed then $v$ has a non-trivial henselian coarsening if and only if for every algebraic extension $L/K$, any extension of $v$ to $L$ is t-henselian.
\end{theorem*}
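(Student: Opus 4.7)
The proof splits along the two statements. The forward direction of the main equivalence is standard: if $v$ is henselian then in any decomposition $v = v_\Delta \circ \bar v$ both factors are henselian (a coarsening of a henselian valuation is henselian, and the residue factor of a henselian composition is itself henselian). So each $(Kv_\Delta, \bar v)$ is henselian, hence in particular t-henselian.

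For the reverse direction, I would argue by contradiction. Assume $v$ is not henselian; the hypothesis applied at $\Delta = v(K^\times)$ (so $v_\Delta$ is trivial and $\bar v = v$) gives that $(K, v)$ is t-henselian. If $K$ is separably closed then every valuation on $K$ is automatically henselian via unique extensions to the purely inseparable algebraic closure, a contradiction. Otherwise, invoking the Prestel--Ziegler theorem yields a henselian valuation $w$ on $K$ inducing the same V-topology as $v$; dependence of $w$ and $v$ produces a common non-trivial coarsening $u = v_{\Delta_u}$, which is henselian as a coarsening of $w$. Iterating this procedure on the hypothesis-preserved valued field $(Ku, \bar v)$ produces a strictly decreasing chain of henselian coarsenings of $v$, and the key technical step is to identify its terminus as the finest henselian coarsening $v_{\Delta_0}$ of $v$, with $\Delta_0 = \bigcap \{\Delta : v_\Delta \text{ henselian}\}$. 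Since $v$ was assumed non-henselian, $\Delta_0 \neq \{0\}$; the induced valuation $\bar v$ on $Kv_{\Delta_0}$ is then non-trivial, not henselian (else composing with $v_{\Delta_0}$ would make $v$ henselian), and admits no proper non-trivial henselian coarsening (by minimality of $\Delta_0$). Applying Prestel--Ziegler once more to the t-henselian $(Kv_{\Delta_0}, \bar v)$, with the separably closed subcase handled via automatic henselianity, produces a non-trivial henselian coarsening of $\bar v$, contradicting minimality.

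For the ``moreover'' addendum, the forward direction uses the unique extension property of henselian valuations. Given a non-trivial henselian coarsening $u$ of $v$, it extends uniquely to any algebraic $L/K$ as a henselian valuation $u_L$. For any extension $v'$ of $v$ to $L$, the coarsening of $v'$ whose value group corresponds to the convex hull in $v'(L^\times)$ of the convex subgroup defining $u$ restricts to $u$ on $K$, hence equals $u_L$ by uniqueness. Thus $u_L$ is a non-trivial coarsening of $v'$, making $v'$ and $u_L$ dependent and inducing the same V-topology, which is t-henselian since $u_L$ is henselian. For the converse, specializing the hypothesis to $L = K$ gives that $v$ itself is t-henselian; since $K$ is neither separably closed nor real closed, Prestel--Ziegler supplies a henselian valuation $w$ on $K$ inducing the same topology as $v$, and the common non-trivial coarsening of $w$ and $v$ provides a non-trivial henselian coarsening of $v$.

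The principal obstacle is the existence of the finest henselian coarsening $v_{\Delta_0}$: the chain of convex subgroups yielding henselian coarsenings of $v$ is linearly ordered, but its infimum might a priori fail to yield a henselian coarsening. The argument must leverage the hypothesis that every residue $(Kv_\Delta, \bar v)$ is t-henselian, together with the Prestel--Ziegler theorem at each intermediate stage, to force the intersection to remain in the chain. The second essential ingredient is the precise valuation-theoretic form of Prestel--Ziegler used throughout: a non-archimedean t-henselian V-topology on a non-separably-closed field is induced by some henselian valuation.
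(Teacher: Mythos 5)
Your proposal has a fatal gap: it rests, at every crucial juncture, on the claim that a (non-archimedean) t-henselian V-topology on a field that is neither separably closed nor real closed is induced by some \emph{henselian} valuation. This is false, and the paper itself points to the counterexample: Prestel and Ziegler (p.~338 of their paper) construct a t-henselian field of characteristic $0$ which is neither real closed nor algebraically closed and which admits \emph{no} non-trivial henselian valuation. T-henselianity only yields elementary equivalence to a henselian field; henselianity of a valuation is not an elementary property, so nothing transfers back. Consequently your reverse direction of the main equivalence (producing $w$ henselian inducing the same topology as $v$, then iterating on common coarsenings) cannot get started, and likewise the converse of the ``moreover'' part, which you derive by specializing the hypothesis to $L=K$ alone. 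That specialization cannot suffice: the Prestel--Ziegler field carries a definable t-henselian valuation with no non-trivial henselian coarsening, so the hypothesis about \emph{all} algebraic extensions (respectively, all coarsenings) must be used in an essential way, which your argument never does.

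The paper's actual proofs are structured quite differently. For the main equivalence it routes $(2)\Rightarrow(1)$ through two intermediate ``non-empty interior'' conditions: t-henselianity of each $(Kv_\Delta,\bar v)$ gives that polynomial images of opens at \'etale points have interior (Proposition \ref{P:etale-in-t-hen}), and then, assuming $v$ not henselian, one takes a polynomial $p$ witnessing failure of Hensel's lemma with root $a$ in the henselization, defines $\Delta$ as the convex subgroup generated by $\{v^h(b-a):b\in K,\ v^h(b-a)>0\}$, shows $S$ is cofinal in $\Delta$ via the Taylor-type identity (Fact \ref{F:taylor}), and uses the approximation Proposition \ref{P:some-approx} on $(Kv_\Delta,\bar v)$ to force $av^h_\Delta\in Kv_\Delta$, contradicting $v(p(b))=v^h(b-a)$ on $J$. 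For the ``moreover'' direction, the hypothesis on all algebraic extensions is exploited by passing to the fixed field $L$ of a $p$-Sylow subgroup of $G_K$ (with $p\neq 2$ if $K$ is Euclidean): there $p$-henselianity \emph{is} an elementary property (Koenigsmann), so t-henselianity of $(L,v')$ yields that $L$ is $p$-henselian, hence henselian since $L(p)=L^{sep}$; a descent result of Koenigsmann then restricts a suitable henselian coarsening back down to $K$. Your correct observation in the forward directions (coarsenings and unique extensions of henselian valuations) matches the paper, but the substantive halves of both equivalences are missing.
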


As a final result, we use all of the above to show the following reformulations of Shelah's Conjecture.

\begin{theorem*}[Corollary \ref{C:reform-shelah}]
The following are equivalent:
\begin{enumerate}
\item Every infinite NIP field is either separably closed, real closed or admits a non-trivial henselian valuation.
\item Every infinite NIP field is either separably closed, real closed or admits a non-trivial definable henselian valuation.
\item Every infinite NIP field is either t-henselian or the algebraic closure of a finite field.
\item Every infinite NIP field is elementary equivalent to a henselian field.
\end{enumerate}
\end{theorem*}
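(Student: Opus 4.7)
The plan is to prove the equivalence via the cycle $(1) \Rightarrow (3) \Rightarrow (4) \Rightarrow (2) \Rightarrow (1)$. The first and last links are almost immediate; the inner two are supplied, respectively, by Prestel--Ziegler's theorem that every t-henselian field is elementarily equivalent to a henselian valued field, and by \cite[Theorem 5.2]{JahKoe} (equivalently \cite[Theorem 6.19]{dupont}), which converts a t-henselian topology on a field that is neither separably closed nor real closed into a definable henselian valuation. The NIP assumption enters only via its inheritance by an elementarily equivalent field.

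For $(1) \Rightarrow (3)$ I proceed by trichotomy. A non-trivial henselian valuation is tautologically t-henselian as a V-topology. A real closed $K$ is t-henselian either through the Archimedean absolute value topology, together with the intermediate value theorem for polynomials, or, in the non-Archimedean case, through the convex hull of $\mathbb{Z}$, which furnishes a non-trivial henselian valuation. If $K$ is separably closed and $K \neq \overline{\mathbb{F}_p}$, then Krull's theorem produces a non-trivial valuation $v$; since the henselization of $(K,v)$ is separable algebraic over $K$ and $K$ is separably closed, $(K,v)$ is already henselian, and so $K$ is t-henselian. For $(3) \Rightarrow (4)$, I apply Prestel--Ziegler; the residual case $K = \overline{\mathbb{F}_p}$ is handled by the elementary equivalence $\overline{\mathbb{F}_p} \equiv \overline{\mathbb{F}_p(t)}^{\mathrm{alg}}$, the latter being henselian for the same henselization reason.

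The substantive step is $(4) \Rightarrow (2)$: let $K$ be infinite NIP with $K \equiv L$ and $L$ non-trivially henselian, so $L$ inherits NIP. If $L$ is separably closed or real closed, so is $K$, and (2) holds trivially. Otherwise $L$ is t-henselian, not separably closed, not real closed, and \cite[Theorem 5.2]{JahKoe} yields a formula $\varphi(x)$ in the language of rings defining a henselian valuation ring on $L$. Since ``$\varphi(x)$ defines a henselian valuation ring'' is itself a first-order scheme in the ring language, the same formula defines a henselian valuation ring on $K$, yielding (2). Finally $(2) \Rightarrow (1)$ is tautological. The only mild subtlety is to remember, in the separably closed branch of $(1) \Rightarrow (3)$, that a separably closed field outside $\overline{\mathbb{F}_p}$ always carries a non-trivial henselian valuation; this is precisely what makes the $\overline{\mathbb{F}_p}$ exception in (3) genuinely tight.
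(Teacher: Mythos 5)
Your links $(1)\Rightarrow(3)$ and $(3)\Rightarrow(4)$ are correct (and essentially reverse the paper's $(1)\Rightarrow(4)\Rightarrow(3)$), but the load-bearing step $(4)\Rightarrow(2)$ has a genuine gap. The result you invoke (\cite[Theorem 5.2]{JahKoe}, equivalently \cite[Theorem 6.19]{dupont}) produces, on a t-henselian field that is neither separably closed nor real closed, a definable valuation \emph{inducing the t-henselian topology}; it does not produce a definable \emph{henselian} valuation. These are genuinely different things: Prestel and Ziegler construct a t-henselian field of characteristic $0$, neither real closed nor algebraically closed, which admits no non-trivial henselian valuation whatsoever, so on that field the definable valuation supplied by the theorem cannot even have a henselian coarsening. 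Even when $L$ does admit a non-trivial henselian valuation, the definable valuation you obtain may be a proper refinement of it and fail Hensel's lemma; and under NIP alone a henselian valuation is in general only \emph{externally} definable (\cite{JahNIP}), which does not transfer along elementary equivalence. So the formula $\varphi$ you propose to pull back to $K$ need not define a henselian ring on $L$ in the first place, and the transfer argument collapses. (Had you actually had a parameter-free formula defining a henselian ring on $L$, the transfer itself would be fine, since henselianity of a fixed $\emptyset$-definable ring is an elementary scheme.)

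This is precisely the gap the paper's machinery is built to close, and it cannot be closed field-by-field. The paper routes the hard direction through $(3)\Rightarrow(1)$: starting from the definable t-henselian valuation $v$ on $K$, it applies hypothesis (3) again to the residue field $Kw$ of \emph{every} proper coarsening $w$ of $v$ (each such $Kw$ being itself an infinite NIP field), uses Proposition \ref{P:def-v-top-is-t-hen} --- a consequence of the main Theorem \ref{T:indep-val=ip} --- to identify the t-henselian topology on $Kw$ with the one induced by $\bar v$, and then invokes the characterisation of henselianity in Theorem \ref{T:t-h} ($(K,v)$ is henselian iff $(Kv_\Delta,\bar v)$ is t-henselian for all coarsenings $v_\Delta$) to conclude that $v$ is honestly henselian. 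The further passage from (1) to (2) then goes through Proposition \ref{P:jahnke}, which again applies the hypothesis to residue fields to force the canonical henselian valuation to have finite, real closed or separably closed residue field, whence definability follows from \cite{JahKoe2015}. Your argument applies the hypothesis only to $K$ itself; no such single-field argument can bridge the gap between t-henselianity and henselianity.
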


\subsubsection*{Acknowledgments}
We would like to thank Vincenzo Mantova for suggesting the present formulation of Proposition \ref{P:approx-for-curves}. Furthermore, we would like to thank Itay Kaplan for allowing us to use Lemma \ref{L:itay}. 
\section{Preliminaries}

\subsection{Valued Fields}
We will be using standard valuation theoretic terminology and notation. For ease of reference, we list below the main definitions and notations. 
See \cite{EnPr} for more background on valued fields. 

A valued field $(K,v)$ is a field $K$ together with a group homomorphism $v:K^\times\to \Gamma$ satisfying $v(x+y)\geq \min\{v(x),v(y)\}$, where $\Gamma$ is an ordered abelian group. We may extend $v$ to $K$ by defining $v(0):=\infty$. The set $\mathcal{O}_v:=\{x:v(x)\geq 0\}$ is called the valuation ring and $\mathcal{M}_v:=\{x:v(x)>0\}$ is its unique maximal ideal. The field $Kv:=\mathcal{O}_v/\mathcal{M}_v$ is called the residue field and the canonical projection $res:\mathcal{O}_v\to Kv$ is called the residue map. The value group $\Gamma$ is also denoted by $vK$. Note that $vK\simeq K^\times/\mathcal{O}_v^\times$ (as ordered groups) implying that giving a valuation $v$ on $K$ is 
(up to an isomorphism of the value group) 
the same as specifying its valuation ring.

A valuation $w:K^\times\to \Gamma'$ is a \emph{coarsening} of $v$ if $\mathcal{O}_v\subseteq \mathcal{O}_w$. There is a natural bijection between valuations coarsening $v$ and convex subgroups of the valuation group of $v$ (see \cite[Lemma 2.3.1]{EnPr}).
Two valuation $v_1$ and $v_2$ on $K$ are \emph{comparable} if $\mathcal{O}_{v_1}\subseteq \mathcal{O}_{v_2}$ or $\mathcal{O}_{v_2}\subseteq \mathcal{O}_{v_1}$, they are \emph{dependent} if their minimal common coarsening $\mathcal{O}_{v_1}\cdot \mathcal{O}_{v_2}$ is not equal to $K$. The topology induced by a valuation $v$ on a field $K$ is the field topology given by the neighbourhood base 
$\{x \in K : v(x)>\gamma \}_{\gamma\in vK}$ of open balls
of radius $\gamma$ around zero. More generally, we write
$B_\gamma(b,K)=\{x \in K : v(x-b) > \gamma\}$ for the open ball of radius 
$\gamma$ around $b$ in a valued field $(K,v)$. We write $B_\gamma(b)$ if 
the underlying field is obvious from the context, and $B_\gamma(K)$ in case
$b=0$.
%

\subsection{V-topological fields}\label{ss:V-topAG}
Recall that a $V$-topological field $(K,\tau)$ is a non-discrete Hausdorff topological field satisfying that for every $W\in \tau$ there exists $U\in \tau$ such that $xy\in U$ implies that either $x\in W$ or $y\in W$, see, e.g., \cite[Appendix B]{EnPr}. By a theorem of D\"urbaum and Kowalsky (see \cite[Theorem B.1]{EnPr}), a field supports a V-topology if and only if there exists either an (archimedean) absolute value or a (non-archimedean) valuation on $K$ inducing this topology. Every ordered field is also a V-topological field: if the order is archimedean the absolute value inherited from its embedding into the reals gives the desired topology. Otherwise, the natural valuation (which has the convex hull of 
$\mathbb Z$ as its valuation ring) is non-trivial. 

\begin{definition}\label{D:ift}
The \emph{V-closure} of a V-topological field $(K,\tau)$ is the completion of $(K,\tau)$ if $\tau$ is induced by an absolute value, and its henselization if 
$\tau$ is induced by a valuation.
A valued field $(K,\tau)$ is \emph{V-closed} if it is equal to its closure.
\end{definition}

We view a valued field as a substructure (in the language of valued fields) of its completion. In general, open sets are not $\bigvee$-definable, but nevertheless they are still a union of definable open balls, and thus any such open set $U$ in $K$ can also be interpreted in its $V$-closure, $\widehat K$, as a union of definable open balls. Because the completion is an immediate extension and using the ultra-metric triangle inequality, this goes through also in the other direction, as explained in the following simple lemma,  allowing us -- for certain arguments of an analytic nature -- to move back and forth between a field and its $V$-closure: 

\begin{lemma}\label{L:ift-"denseness"}
Let $(K,\tau)$ be a V-topological field with V-closure $(\widehat K,\widehat\tau)$. Then for every $\widehat W\in \widehat\tau$, $\widehat W\cap K$ is a $\tau$-open set and $\widehat{\widehat W\cap K}$, the set $\widehat W\cap K$ defines in $\widehat K$, is a $\widehat \tau$-open subset of $\widehat W$.
\end{lemma}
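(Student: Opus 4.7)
The first claim is immediate: by construction of the $V$-closure, the topology $\tau$ coincides with the subspace topology on $K$ inherited from $\widehat\tau$, so $\widehat W\cap K$ is $\tau$-open as the preimage of a $\widehat\tau$-open set under the continuous inclusion $K\hookrightarrow\widehat K$.

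For the second claim, the plan is first to present $\widehat W$ itself as a union of definable open balls whose centres lie in $K$ and whose radii lie in $vK$ (respectively, in $|K^\times|$ in the archimedean case), and then to verify that the formula obtained by intersecting those balls with $K$ recovers $\widehat W$ when reinterpreted in $\widehat K$. Concretely, in the valuation case, fix $\hat x\in\widehat W$ and choose $\gamma$ with $B_\gamma(\hat x,\widehat K)\subseteq \widehat W$; since $\widehat K/K$ is an immediate extension one has $v\widehat K=vK$, so we may insist that $\gamma\in vK$. By density of $K$ in $\widehat K$, pick $b\in K$ with $v(b-\hat x)>\gamma$; the ultra-metric equality of open balls sharing a point then yields $B_\gamma(b,\widehat K)=B_\gamma(\hat x,\widehat K)\subseteq\widehat W$. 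Ranging over all $\hat x\in\widehat W$ produces a presentation $\widehat W=\bigcup_{i\in I} B_{\gamma_i}(b_i,\widehat K)$ with $b_i\in K$ and $\gamma_i\in vK$.

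Given such a presentation, intersecting each ball with $K$ gives $\widehat W\cap K=\bigcup_{i\in I}B_{\gamma_i}(b_i,K)$, and interpreting the disjunction $\bigvee_i\, v(x-b_i)>\gamma_i$ in $\widehat K$ — which is by definition $\widehat{\widehat W\cap K}$ — returns $\bigcup_{i\in I}B_{\gamma_i}(b_i,\widehat K)=\widehat W$, trivially a $\widehat\tau$-open subset of $\widehat W$. The archimedean case is handled in the same spirit, replacing the ultra-metric equality by the standard triangle inequality (shrinking the radius slightly to absorb the perturbation when moving the centre into $K$) and using density of $|K^\times|$ in $|\widehat K^\times|$. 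The only mild obstacle is this initial transfer of parameters from $\widehat K$ into $K$, and in either case it is supplied by density of $K$ in $\widehat K$ together with the matching of value (respectively absolute value) groups.
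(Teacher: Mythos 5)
Your argument is correct and follows essentially the same route as the paper's: decompose $\widehat W$ into definable balls, use density of $K$ in $\widehat K$ together with the ultra-metric (resp.\ triangle) inequality to move the parameters into $K$, and then reinterpret the resulting union of $K$-balls back in $\widehat K$. The only cosmetic difference is that you re-centre the balls of $\widehat W$ before intersecting with $K$ (and thereby get the slightly stronger conclusion $\widehat{\widehat W\cap K}=\widehat W$ in the valuation case), whereas the paper re-centres at the points of $\widehat W\cap K$ afterwards.
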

\begin{proof}
Let \[\widehat W=\bigcup_i \widehat B_{\gamma _i}(b_i),\] 
where $\widehat B_{\gamma_i}(b_i)\subseteq \widehat K$ is the ball defined by $|x-b_i|<\gamma_i$ if $\tau$ is induced by an absolute value and 
$v(x-b_i)>\gamma_i$ if $\tau$ is induced by a valuation. In the former case,
we have 
\[
\widehat W\cap K=\bigcup_i (\widehat B_{\gamma _i}(b_i)\cap K)=\bigcup_{a\in \widehat W\cap K} B_{\delta_i(a)}(a)
\]
where $B_{\delta_i(a)}(a)\subseteq K$ is the ball $|x-a|<\gamma_i-|a-b_i|$. In the latter case, by the ultra-metric inequality
\[
\widehat W\cap K=\bigcup_i (\widehat B_{\gamma _i}(b_i)\cap K)=\bigcup_{a\in \widehat W\cap K} B_{\gamma_i}(a).
\]
The rest is clear. 
%

\end{proof}

Following Prestel and Ziegler, a V-topological field is 
topologically henselian (t-henselian) if 
it satisfies a topological version of Hensel's lemma. This is made precise in the next definition. Here, for $K$ a field and $U \subseteq K$, we write
$U[X]^m$ to denote the set of polynomials with coefficients in $U$ of degree
at most $m$.
\begin{definition}
Let $(K,\tau)$ be a V-topological field. 
We say that $(K, \tau)$ is t-henselian, if for every $n \geq 1$ there is $U \in \tau$ such that every polynomial $f \in X^{n+1}+X^n+U[X]^{n-1}$ has a zero 
in $K$.
\end{definition}

Just like with Hensel's lemma for valued fields, there are many equivalent ways
of defining t-henselianity. The following characterization is the one which
will be of most use to us:
\begin{fact}[The Implicit Function Theorem]\cite[Section 4]{FVKimplicit}\label{F:implicit}
Let $(K,\tau)$ be a V-topological field.  $(K,\tau)$ is t-henselian if the implicit function theorem is true in $(K,\tau)$ in the following form:\\

\noindent Let $f: K^{n+m}\to K^m$ be a polynomial map. Assume that $(\bar a,\bar b)\in K^{n+m}$ satisfies $f(\bar a,\bar b)=0$ and that the matrix $\left( \partial f_i/\partial y_j (\bar a,\bar b)\right)_{1\leq i,j\leq m}$ is invertible.  Then there exist $\tau$-open neighbourhoods $0\in U\subseteq K^n$ and $0\in V\subseteq K^m$, such that for all $\bar a'\in \bar a + U$ there is exactly one solution $\bar b'\in \bar b+V$ for $f(\bar a',\bar b')=0$. Moreover the map $\bar a'\mapsto \bar b'$ from $\bar a+U$ to $K^m$ is continuous.
\end{fact}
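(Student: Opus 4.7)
The plan is to prove both directions of the equivalence (although the excerpt writes ``if'', the Fact is introduced as a characterization of t-henselianity, and both directions are typically needed in applications).

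For the direction IFT $\Rightarrow$ t-henselian, I would apply the implicit function theorem directly to the polynomial map $F\colon K^n\times K\to K$ defined by $F(a_0,\dots,a_{n-1},y)=y^{n+1}+y^n+\sum_{i=0}^{n-1}a_iy^i$ at the base point $(\bar 0,-1)$. A direct computation gives $F(\bar 0,-1)=(-1)^{n+1}+(-1)^n=0$ and $\partial F/\partial y(\bar 0,-1)=(-1)^n\neq 0$, so IFT yields a $\tau$-open neighbourhood $U$ of $\bar 0$ in $K^n$ such that each choice of coefficients in $U$ produces a root near $-1$; this is exactly the condition in the definition of t-henselianity (possibly after shrinking $U$).

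For the direction t-henselian $\Rightarrow$ IFT, the plan is to first reduce to the case $m=1$ by induction on $m$. After a linear change of coordinates in the $\bar y$-variables one may assume that the Jacobian $(\partial f_i/\partial y_j)$ at $(\bar a,\bar b)$ is the identity matrix, so in particular $\partial f_m/\partial y_m(\bar a,\bar b)=1$; apply the case $m=1$ to solve $f_m=0$ for $y_m$ as a continuous function of the other variables, substitute into $f_1,\dots,f_{m-1}$, check that the resulting reduced Jacobian is still invertible, and invoke the induction hypothesis. For the base case $m=1$, translate $(\bar a,b)$ to $(\bar 0,0)$ and normalise so that $\partial f/\partial y(\bar 0,0)=1$; then $f(\bar a',Y)=\epsilon+(1+\eta)Y+\sum_{i\geq 2}c_iY^i$ with $\epsilon:=f(\bar a',0)$ and $\eta$ small, and the $c_i$ uniformly bounded for $\bar a'$ in a suitable neighbourhood. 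The aim is to perform a rescaling $Y=\lambda Z+\mu$ (with $\lambda,\mu$ chosen depending on $\bar a'$) transforming the equation into one of the form $Z^{n+1}+Z^n+g(Z)$ with $g\in U[Z]^{n-1}$, so that t-henselianity produces a $K$-root of $f(\bar a',Y)$ close to $0$. Continuity of the resulting implicit map then follows by repeating the argument over a shrinking system of neighbourhoods, and uniqueness follows from the derivative condition together with a further appeal to t-henselianity at nearby points.

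The hard part is the direction t-henselian $\Rightarrow$ IFT, and specifically the technical rescaling bringing $f(\bar a',Y)$ into the precise normalised form $Z^{n+1}+Z^n+g(Z)$ demanded by the definition. An alternative route is to perform the analytic construction in the V-closure $(\widehat K,\widehat\tau)$, where standard Hensel-type arguments apply without fuss, and then use t-henselianity in $K$ together with Lemma \ref{L:ift-"denseness"} to descend the solution back to $K$; this shifts but does not avoid the technical step, since one must still identify the actual $K$-rational root via an appropriate t-henselian reduction.
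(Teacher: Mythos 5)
The paper offers no proof of this Fact at all: it is quoted verbatim from the cited reference \cite[Section 4]{FVKimplicit} (combined with the Prestel--Ziegler equivalences), so there is no internal argument to compare yours against. Your easy direction (IFT $\Rightarrow$ t-henselian) is correct and is essentially the content of the Remark following the Fact: applying the IFT to $F(\bar a,y)=y^{n+1}+y^n+\sum a_iy^i$ at $(\bar 0,-1)$, where $\partial F/\partial y(\bar 0,-1)=(n+1)(-1)^n+n(-1)^{n-1}=(-1)^n\neq 0$ in every characteristic, does produce the required neighbourhood of coefficients. Note, though, that the Fact as printed literally asserts only this direction; the converse (t-henselian $\Rightarrow$ IFT) is the one the paper actually uses in Propositions \ref{P:etale-in-t-hen} and \ref{P:v-top-implicit-etale}, and it is there that your sketch has real gaps.

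Two concrete problems in your hard direction. First, the proposed rescaling $Y=\lambda Z+\mu$ cannot in general bring $f(\bar a',Y)$ into the form $Z^{n+1}+Z^n+g(Z)$ with $g\in U[Z]^{n-1}$: you must match $n+1$ coefficient conditions (leading coefficient $1$, subleading coefficient $1$, all lower coefficients small) with only the two parameters $\lambda,\mu$. The reduction that works is the reciprocal-polynomial substitution: after normalising to $f=\epsilon+(1+\eta)Y+\sum_{i\geq 2}c_iY^i$ with $\epsilon$ small, substitute $Y=-\epsilon W/(1+\eta)$, divide by $\epsilon$, and reverse the polynomial ($W\mapsto 1/W$ followed by clearing denominators) to reach the normal form; this is how Prestel--Ziegler relate the form $X^{n+1}+X^n+U[X]^{n-1}$ to the form $1+X+U[X]$ with perturbations in degrees $\geq 2$. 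Second, and more seriously, the definition of t-henselianity only guarantees that the resulting polynomial has \emph{some} zero in $K$, with no control over its location: the reversed polynomial has one root near $-1$ and $n$ roots near $0$, and the axiom might hand you one of the latter, in which case undoing the substitutions produces a root of $f$ that is not small. Locating the root, proving its uniqueness in a neighbourhood, and establishing continuity of $\bar a'\mapsto \bar b'$ all require more than the bare existence axiom. The standard (and cleanest) route is the one your ``alternative'' gestures at but does not quite reach: by \cite[Theorem 7.2]{PrZ1978} a t-henselian field is elementarily equivalent (in the topological language) to a henselian valued field; the IFT holds for henselian valued fields by the classical argument of the cited reference; and for each fixed degree and each $n,m$ the IFT is an elementary scheme over the uniformly definable neighbourhood base, so it transfers back. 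Merely working in the V-closure and invoking Lemma \ref{L:ift-"denseness"} does not suffice, since density does not by itself force the $\widehat K$-rational solution to lie in $K$.
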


\begin{remark}
By \cite[Theorem 7.4]{PrZ1978} $t$-henselianity of $(K,\tau)$ is already implied by the special case of the implicit function theorem where $m=1$. 
\end{remark}

Examples of t-henselian V-topological fields are V-closures of V-topological fields \cite[Section 7]{PrZ1978}. \\
%

%

The implicit function theorem is formally equivalent to the inverse function theorem, which will be more useful for our needs: 

\begin{lemma}[The Inverse Function Theorem]\label{L:PZ}
Let $(K,\tau)$ be a V-topological field. Then $(K,\tau)$ is t-henselian if and only if the following holds: 
for any polynomial map $r(\bar x): K^n\to K^n$ and $\bar d\in K^n$ such that  
\[|J_r(\bar d)|:=\mathrm{det}\left (\left( \partial r_i/\partial x_j (\bar d)\right)_{1\leq i,j\leq n}\right )\neq 0\]
  there are $\tau$-open subsets $U,V\subseteq K^n$ with $(\bar d,r(\bar d))\in U\times V$  and a continuous map $g:V\to U$ such that $(r\restriction U)\circ g=\textrm{Id}_V$ and $g\circ (r\restriction U)=\textrm{Id}_U$.
\end{lemma}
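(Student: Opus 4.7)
The plan is to prove the equivalence by the standard formal interchange between the implicit and the inverse function theorems, using Fact~\ref{F:implicit} for one direction and the remark following it for the other.

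For the forward implication, assume $(K,\tau)$ is t-henselian, and let $r : K^n \to K^n$ be polynomial with $|J_r(\bar d)| \neq 0$. I would apply Fact~\ref{F:implicit} to the polynomial map $F : K^{2n} \to K^n$ defined by $F(\bar y, \bar x) := r(\bar x) - \bar y$, at the point $(r(\bar d), \bar d)$. One has $F(r(\bar d), \bar d) = 0$, and the matrix of partials of $F$ in the $\bar x$-variables at this point is precisely $J_r(\bar d)$, hence invertible. Fact~\ref{F:implicit} then yields open neighbourhoods $0 \in V' \subseteq K^n$ and $0 \in U' \subseteq K^n$ together with a continuous map $h : r(\bar d) + V' \to \bar d + U'$ sending each $\bar y'$ to the unique solution in $\bar d + U'$ of $r(\bar x) = \bar y'$. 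Setting $V := r(\bar d) + V'$ and $U := r^{-1}(V) \cap (\bar d + U')$ (open by continuity of $r$), the uniqueness clause of Fact~\ref{F:implicit} forces $h$ to be a two-sided inverse of $r \restriction U$, producing the $g$ required by the statement.

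For the converse, by the remark following Fact~\ref{F:implicit} it suffices to verify the $m = 1$ case of the implicit function theorem. Given $f(\bar x, y) \in K[\bar x, y]$ with $f(\bar a, b) = 0$ and $(\partial f/\partial y)(\bar a, b) \neq 0$, I would consider the polynomial map $r : K^{n+1} \to K^{n+1}$ defined by $r(\bar x, y) := (\bar x, f(\bar x, y))$. Its Jacobian at $(\bar a, b)$ is block lower-triangular with $I_n$ in the upper-left block and $(\partial f/\partial y)(\bar a, b)$ in the lower-right, so $|J_r(\bar a, b)| \neq 0$. The inverse function theorem yields open $U \ni (\bar a, b)$ and $V \ni (\bar a, 0)$ in $K^{n+1}$ and a continuous $g = (g_1, g_2) : V \to U$ inverting $r \restriction U$. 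The identity $r \circ g = \textrm{Id}_V$ then forces $g_1(\bar x, z) = \bar x$ and $f(\bar x, g_2(\bar x, z)) = z$. Shrinking to a product neighbourhood $U_1 \times U_2 \subseteq V$ of $(\bar a, 0)$ and specialising $z = 0$, the map $\bar x \mapsto g_2(\bar x, 0)$ is the required continuous implicit function, and uniqueness on a sufficiently small neighbourhood follows from $g \circ (r \restriction U) = \textrm{Id}_U$.

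Neither direction presents a serious obstacle: the arguments are formal duals of one another. The only real bookkeeping concerns matching the open neighbourhoods produced and carrying the uniqueness clause across each translation, since without it the candidate maps might fail to be single-valued.
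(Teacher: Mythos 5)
Your proof is correct and follows exactly the paper's route: the forward direction applies the implicit function theorem (Fact \ref{F:implicit}) to $f(\bar x,\bar y)=r(\bar y)-\bar x$, and the converse recovers the implicit function theorem from the inverse one via $r(\bar x,\bar y)=(\bar x,f(\bar x,\bar y))$, which is precisely the paper's (much terser) argument. Your version just fills in the neighbourhood bookkeeping that the paper leaves as "standard and formal".
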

\begin{proof}
We use Fact \ref{F:implicit}. The inverse function theorem is a standard, formal, consequence of the the implicit function theorem by considering $f(\bar x,\bar y)=r(\bar y)-\bar x$. For the other direction, the implicit function theorem is a formal consequence of the inverse function theorem by considering $r(\bar x ,\bar y)=(\bar x,f(\bar x,\bar y))$. 
\end{proof}

The open mapping theorem is also a formal consequence of the inverse function theorem. For applications, a seemingly weaker version is handy: 
\begin{proposition}\label{P:etale=non-empty interior}
Let $(K,\tau)$ be a V-topological field. If for all open $U\subseteq K^n$, polynomial maps $f:U\to K^n$, and $\bar a\in U$ with $|J_f(a)|\neq 0$ there exists an open $f(\bar a)\in W\subseteq  f(U)$ then $(K,\tau)$ is t-henselian.
\end{proposition}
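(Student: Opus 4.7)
The plan is to verify $t$-henselianity directly from the definition. Fix $n\geq 1$; the goal is to produce an open neighborhood $U$ of $0$ in $K$ such that every polynomial of the form $X^{n+1}+X^n+\sum_{i=0}^{n-1}a_iX^i$ with $a_0,\dots,a_{n-1}\in U$ admits a root in $K$. The starting observation is that the unperturbed polynomial $p(X)=X^{n+1}+X^n$ has the simple root $X=-1$: a short calculation gives $p'(-1)=(-1)^n$, which is nonzero in every characteristic.

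The idea is to recast ``existence of a root of a parametrized family of polynomials'' as ``the image of a single polynomial map has nonempty interior at a designated point'', so that the hypothesis applies directly. I would consider
\[
\Phi\colon K^{n+1}\to K^{n+1},\qquad \Phi(\bar a,X)=(\bar a,\, f_{\bar a}(X)),
\]
where $f_{\bar a}(X)=X^{n+1}+X^n+\sum_{i=0}^{n-1} a_iX^i$. Then $\Phi(0,-1)=(0,0)$, and the Jacobian matrix of $\Phi$ at $(0,-1)$ is block triangular with diagonal blocks $\mathrm{Id}_n$ and the scalar $p'(-1)$, so $|J_\Phi(0,-1)|=(-1)^n\neq 0$. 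Applying the hypothesis to $\Phi$ on the open set $K^{n+1}$ at the point $(0,-1)$ then produces an open $W\ni(0,0)$ contained in the image of $\Phi$.

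From $W$ I would extract $U$ as follows. The continuous section $\iota\colon K^n\to K^{n+1}$, $\bar a\mapsto(\bar a,0)$, makes $\iota^{-1}(W)$ an open neighborhood of $0$ in $K^n$. Since $K^n$ carries the product topology, $\iota^{-1}(W)$ contains a basic box $U_0\times\cdots\times U_{n-1}$ around $0$; setting $U:=\bigcap_{i<n}U_i$ yields an open neighborhood of $0$ in $K$ with $U^n\subseteq \iota^{-1}(W)$. For any $\bar a\in U^n$ we then have $(\bar a,0)\in W\subseteq\Phi(K^{n+1})$, so some $X\in K$ satisfies $\Phi(\bar a,X)=(\bar a,0)$, i.e.\ $f_{\bar a}(X)=0$. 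This is exactly the condition required for $t$-henselianity.

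I do not anticipate a substantive obstacle. The only nontrivial ingredients are the identification of the auxiliary map $\Phi$ and of the base point $(0,-1)$, both of which are essentially forced by the shape of the $t$-henselianity definition together with the Jacobian nonvanishing required by the hypothesis; the remainder is formal manipulation with the product topology.
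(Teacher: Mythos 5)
Your proof is correct, but it takes a genuinely different route from the paper's. The paper verifies the inverse function theorem criterion of Lemma \ref{L:PZ}: given $f:K^n\to K^n$ with $|J_f(\bar a)|\neq 0$, it passes to the V-closure $(\widehat K,\widehat\tau)$ (already known to be t-henselian) to find a neighbourhood of $\bar a$ on which $f$ is injective, uses the non-empty-interior hypothesis only to upgrade injectivity to openness of $f$ on $U=\widehat U\cap K$, concludes that $f\restriction U$ is a homeomorphism, and then invokes the converse direction of Lemma \ref{L:PZ}. You instead verify the Prestel--Ziegler definition of t-henselianity directly, packaging the whole parametrized family $X^{n+1}+X^n+\sum_{i<n}a_iX^i$ into the single self-map $\Phi(\bar a,X)=(\bar a,f_{\bar a}(X))$ of $K^{n+1}$, whose block-triangular Jacobian at $(\bar 0,-1)$ has determinant $p'(-1)=(-1)^n\neq 0$ precisely because $-1$ is a simple root of the unperturbed polynomial; one application of the hypothesis at that single point, followed by a routine product-topology extraction of a box neighbourhood, yields the required $U\in\tau$. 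Your argument is more elementary and self-contained: it avoids both the V-closure and the equivalence with the inverse/implicit function theorems. What the paper's route buys is that it establishes the inverse function theorem for $(K,\tau)$ itself, which is the form of t-henselianity the rest of the paper actually uses, and it fits the surrounding development in which Lemma \ref{L:PZ} is the working characterization. Both proofs are sound.
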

\begin{proof}
We use Lemma \ref{L:PZ}. Let $f:K^n\to K^n$ be a polynomial map and $\bar a \in K^n$ such that $|J_f(\bar a)|\neq 0$. Let $U_0\subseteq K^n$ be the $\tau$-open subset of points where the Jacobian determinant is non-zero and let $\widehat U_0$ be the set it defines in $(\widehat K,\widehat \tau)$, the V-closure of $(K,\tau)$. Note the following crucial observation:

\begin{claim}
For any $W\subseteq U_0$ , if $W$ is open so is $f(W)$.
\end{claim}
\begin{claimproof}
By assumption, for every $\bar b\in W$ there is a $\tau$-open subset of $f(W)$ containing $f(a)$. This implies that $f(W)$ is open.
\end{claimproof}

Applying one direction of Lemma \ref{L:PZ} to the henselian field $(\widehat{K}, \widehat{\tau})$ there exists an open subset $\bar a\in \widehat U\subseteq \widehat U_0 \subseteq \widehat K^n$  such that $f\restriction \widehat U$ is a homeomorphism onto $\widehat V:=f(\widehat U)$. So $f\restriction U$ is injective, open -- by the above claim -- and continuous (being a polynomial map), where $U:=\widehat{U}\cap K$. So it is a homeomorphism between $U$ and $f(U)$, and the result follows by applying the
other direction of Lemma \ref{L:PZ} to $(K,\tau)$.

\end{proof}

\subsection{Some standard algebraic geometry over perfect fields}

For the sake of completeness, we present the main algebro-geometric notions applied in the sequel.  The results of this subsection are certainly not new. The algebraic geometry needed here can be found in \cite{Lorenzini} and \cite{gortz}.

Many of the results below can probably be generalized to a non-perfect setting. One reason for favouring perfect fields is that the geometric notion of a variety over a given field coincides, when the field is perfect, with definability of the variety over that field in the model theoretic sense. 

By a variety over a perfect field $K$ we mean a geometrically integral scheme of finite type over $K$. When $V$ is affine there are finitely many polynomials over $K$ such that for any extension $L/K$ the set of $L$-rational points of $V$ is the zero-set of these polynomials. If $V$ is not affine, by elimination of imaginaries, the set of $K^{alg}$-rational points of $V$ is definable over $K$, see e.g., \cite[Section 7.4]{marker}. 

We note also that as V-topologies are field topologies, any $V$-topology on the field can be extended to a topology on any algebraic variety over the field by gluing affine patches, see, e.g., \cite[Proposition 2.3]{HiKaRi}.


There are several equivalent definitions of a morphism between varieties being \emph{\'etale} at a point, see \cite[Tag 02GU]{stacks-project}.  For ease of presentation we give the following definition, which will be most convenient for our needs:

\begin{definition}\cite[Tag 02GU, $(8)$]{stacks-project}\label{D:etale}
Let $X$ and $Y$ be varieties over a perfect field $K$. We say that $f:X\to Y$ is \emph{\'etale at $a\in X$} if there exist affine open neighbourhoods $U=\Spec(A)$ of $a$ and $V=\Spec(R)$ of $f(a)$, such that there exists a presentation 
\[A=R[x_1,\dots,x_n]/(f_1,\dots, f_n)\]
with 
$\det (\partial f_j/\partial x_i)_{1\leq i,j\leq n}$ mapping to an element of $A$ not in the prime ideal corresponding to $a$.
%
\end{definition}

Recall that if $X=\Spec\left(K[x_1,\dots,x_n]/(f_1,\dots,f_m)\right)$ is an affine variety over $K$, then the set of $K$-rational points, denoted by $X(K)$, is the set of morphisms $\mathrm{Hom}_K (\Spec (K),X)$. This set may be identified with the set of tuples $\bar a=(a_1,\dots,a_n)\in K^n$ satisfying $f_1(\bar a)=\dots=f_m(\bar a)=0$ and thus corresponding to the prime ideal $\mathfrak{q}_{\bar a}\triangleleft K[x_1,\dots,x_n]/(f_1,\dots,f_m)$ of all polynomials vanishing on $\bar a$. If $X$ is a variety over $K$ then a $K$-rational point of $X$ is a $K$-rational point of one of its open affine subsetes. We may thus use the following simplification of the definition of an \'etale morhpism at $K$-rational points:

Let $X$ and $Y$ be varieties over a perfect field $K$. Then $f:X\to Y$ is \emph{\'etale at $a\in X(K)$} if there exist affine open neighbourhoods $U=\Spec\left(K[x_1,\dots, x_{k+n}]/(g_1,\dots,g_l,f_1,\dots,f_n)\right)$ of $\mathfrak{q}_a$ (the prime ideal corresponding to $a$) and $V=\Spec\left(K[x_1,\dots,x_k]/(g_1,\dots,g_l)\right)$ of $f(\mathfrak{q}_a)$, with \[g_1,\dots,g_l\in K[x_1,\dots, x_k],\, f_1,\dots,f_n\in K[x_1,\dots,x_{k+n}]\] such that $\det (\partial f_j/\partial x_i)_{1\leq j\leq n,k+1\leq i\leq k+n}(a)\neq 0$ and the morphism $f$ restricted to $U$ is the projection onto the first $k$-coordinates.

We recall that if $f_1,\dots,f_m\in K[x_1,\dots,x_n]$ then $V(f_1,\dots,f_n)\subseteq K^n$ is the Zariski closed subset of $K^n$ defined by these polynomials.

\begin{proposition}\label{P:etale-in-t-hen}
Let $(K,\tau)$ be a t-henselian V-topological field and assume that $K$ is perfect. Let $h:X\to Y$ be a morphism between two varieties over $K$. If $h$ is \'etale at $p\in X(K)$, then there exist a $\tau$-open subset $p\in U'\subseteq X(K)$ such that $h\restriction U'$ is a $\tau$-homeomorphism onto $h(U')\subseteq Y(K)$.
\end{proposition}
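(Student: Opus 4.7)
The plan is to reduce the statement to the implicit function theorem (Fact \ref{F:implicit}) via the simplified description of \'etaleness at a $K$-rational point given in the paragraph immediately preceding the proposition. Using that description, I would fix affine open neighbourhoods $U=\Spec(K[x_1,\dots,x_{k+n}]/(g_1,\dots,g_l,f_1,\dots,f_n))$ of (the prime ideal corresponding to) $p$ and $V=\Spec(K[x_1,\dots,x_k]/(g_1,\dots,g_l))$ of $h(p)$ such that $h\restriction U$ is the projection onto the first $k$ coordinates and $\det(\partial f_j/\partial x_i)_{1\le j\le n,\,k+1\le i\le k+n}(p)\neq 0$. Writing $p=(\bar p_1,\bar p_2)$ with $\bar p_1\in K^k$ and $\bar p_2\in K^n$, the key structural observation is that the defining equations of $U(K)$ split into the $g_i$, which involve only the first $k$ variables (and thus cut out $V(K)$ inside $K^k$), and the $f_j$, to which the implicit function theorem applies at $(\bar p_1,\bar p_2)$.

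Concretely, applying Fact \ref{F:implicit} to the polynomial map $f=(f_1,\dots,f_n)\colon K^{k+n}\to K^n$ at $(\bar p_1,\bar p_2)$ produces $\tau$-open neighbourhoods $0\in U_0\subseteq K^k$ and $0\in V_0\subseteq K^n$, together with a continuous map $\phi\colon \bar p_1+U_0\to \bar p_2+V_0$ with $\phi(\bar p_1)=\bar p_2$, such that for each $\bar a\in\bar p_1+U_0$ the element $\phi(\bar a)$ is the unique $\bar b\in\bar p_2+V_0$ with $f(\bar a,\bar b)=0$. I would then set
\[
U':=\bigl((\bar p_1+U_0)\times(\bar p_2+V_0)\bigr)\cap U(K),
\]
which is a $\tau$-open neighbourhood of $p$ in $X(K)$ (the box is open in $K^{k+n}$, and $U(K)$ is open in $X(K)$ since $U$ is an affine open subscheme of $X$).

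To finish, I would verify that $h\restriction U'$ is a $\tau$-homeomorphism onto $h(U')=(\bar p_1+U_0)\cap V(K)$, which is $\tau$-open in $Y(K)$. Any $(\bar a,\bar b)\in U'$ satisfies the $f_j$-equations with $\bar b\in\bar p_2+V_0$, so $\bar b=\phi(\bar a)$ by uniqueness, while the $g_i$-equations force $\bar a\in V(K)$; conversely, for $\bar a\in(\bar p_1+U_0)\cap V(K)$ the point $(\bar a,\phi(\bar a))$ manifestly lies in $U'$. Hence $h\restriction U'$ is a bijection onto the stated image, continuous as a coordinate projection, and its inverse $\bar a\mapsto(\bar a,\phi(\bar a))$ is continuous by the conclusion of Fact \ref{F:implicit}. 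I do not anticipate a real obstacle here: the only mildly delicate point is the observation that the $g_i$, being independent of the last $n$ coordinates, pass transparently through the implicit solution $\phi$, so no analytic input beyond the implicit function theorem is required.
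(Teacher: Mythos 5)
Your proposal is correct and follows essentially the same route as the paper: reduce to the standard affine presentation of an \'etale morphism at a $K$-rational point, apply Fact \ref{F:implicit} to $(f_1,\dots,f_n)$, and invert the projection via $\bar a\mapsto(\bar a,\phi(\bar a))$. If anything, your version is slightly more careful than the paper's, which takes $(W\times K^n)\cap X(K)$ as the domain rather than restricting the fibre coordinate to the neighbourhood where the implicit solution is unique, as you do.
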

\begin{proof}

It will suffice to prove the proposition for any affine open subvariety of $X$ containing $p$. Applying the above simplification, we may assume that 
\begin{enumerate}
\item $X$ is given by $V(g_1,\dots,g_l,f_1,\dots,f_n)\subseteq K^{k+n}$, where $g_1,\dots,g_l\in K[X_1,\dots X_k]$ and $f_1,\dots,f_n \in K[X_1,\dots ,X_{k+n}]$;
\item $\det(\partial f_j/\partial X_i)_{k+1\leq i,j\leq k+n}(p)\neq 0$;
\item $Y$ is given by $V(g_1,\dots, g_l)\subseteq K^k$ and
\item $h$ is the projection on the first $k$-coordinates.
\end{enumerate}

Consider $r:K^{k+n}\to K^n$ given by $r=(f_1,\dots,f_n)$. Note that $r(p)=0$. By Fact \ref{F:implicit}, there exists a $\tau$-open subset $W\subseteq K^k$ and a $\tau$-continuous function $s:W\to K^n$ such that $r(x,s(x))=0$. Consequently, $h\restriction (W\times K^n)\cap V(g_1,\dots,g_l,f_1,\dots, f_n)$ is a $\tau$-homeomorphism onto $W\cap V(g_1,\dots g_l)$, where the inverse function is $x\mapsto (x,s(x))$.

\end{proof}

The correspondence sending an algebraic curve $C$ over $K$ to its field of regular functions is an equivalence of categories between  geometrically integral nonsingular projective curves over a perfect field $K$ (with dominant rational maps as morphisms) and the category  of regular finitely generated field extensions of transcendence degree $1$. See \cite[Sections V.10, VII]{Lorenzini} for more details. 


Let $C$ be a geometrically integral nonsingular projective curve over a perfect field $K$ and $L/K$ a finite algebraic extension. The local ring $\mathcal O_{P,C}$ for $P\in C(L)$ is a discrete valuation ring of $K(C)$ (which when the context is clear we will denote by $\mathcal O_P$). It induces a valuation $v_P$ on $K(C)$, which is trivial on $K$, with residue field $L$.
An element $f\in\mathcal{O}_P$ is called a function on $C$ defined at $P$, the integer $v_P(f)$ is called the order of vanishing of $f$ at $P$, and if $v_P(f)>0$ then $f$ has a zero at $P$. If $f\in K(C)\setminus \mathcal{O}_P$ then $f$ has a pole of order $|v_P(f)|$ at $P$. 

Given a morphism $\varphi:X\to Y$ between two geometrically integral nonsingular projective curves over a perfect field $K$, its pullback induces a $K$-algebra homomorphism $\varphi^*:K(Y)\to K(X)$ with the obvious property that to $P\in X(L)$ corresponds a valuation $v_{f(P)}=v_p\circ \varphi^*$ on $K(Y)$. Note that $K(X)/\varphi^*(K(Y))$ is a finite algebraic extension.

\begin{proposition}\label{P:v-top-implicit-etale}
Let $(K,\tau)$ be a t-henselian V-topological field and assume that $K$ is perfect. Let $C$ be a geometrically integral nonsingular projective curve over $K$, $f\in K(C)^\times$ and $p\in C(K)$. If $v_p(f)=1$ then there is a $\tau$-open subset $p\in U\subseteq C(K)$ with $f\restriction U$ a $\tau$-homeomorphism.
\end{proposition}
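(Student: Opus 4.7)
The plan is to interpret $f$ as an étale morphism near $p$ to the affine line and then invoke Proposition \ref{P:etale-in-t-hen}. Since $v_p(f) = 1 > 0$, the function $f$ lies in $\mathcal{O}_{p,C}$ and vanishes at $p$, so it defines a morphism $\varphi_f : U_0 \to \mathbb{A}^1_K = \Spec K[X]$ on some affine open neighbourhood $U_0 \subseteq C$ of $p$, with $\varphi_f(p) = 0$.

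The heart of the argument is to verify that $\varphi_f$ is étale at $p$ in the sense of Definition \ref{D:etale}. The pullback of $X$ under $\varphi_f$ is precisely $f$, and the hypothesis $v_p(f) = 1$ says that $f$ generates the maximal ideal of the DVR $\mathcal{O}_{p,C}$, i.e., $f$ is a uniformizer of $C$ at $p$. For a non-constant morphism between smooth curves over a perfect field, flatness is automatic (the source is a torsion-free module over the target), and having ramification index one at a $K$-rational point is equivalent to being unramified --- and hence étale --- at that point. Invoking the equivalence of the various characterizations of étaleness alluded to just before Definition \ref{D:etale}, we conclude that $\varphi_f$ is étale at $p$. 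I expect this to be the main obstacle: one has to bridge the classical uniformizer condition with the explicit presentation-based definition favoured in the paper. However, once any of the standard equivalences is granted, the step is routine and uses nothing about the V-topology $\tau$.

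With étaleness established, Proposition \ref{P:etale-in-t-hen} furnishes a $\tau$-open subset $p \in U \subseteq C(K)$, which we may shrink so as to lie inside $U_0(K)$, such that $\varphi_f \restriction U$ is a $\tau$-homeomorphism onto its image in $\mathbb{A}^1(K) = K$. Since $\varphi_f$ and $f$ agree on $U_0$, this is exactly the desired conclusion. All of the topological and analytic content, and in particular the t-henselianity of $\tau$, is packaged into Proposition \ref{P:etale-in-t-hen}, so the present statement is essentially a corollary of it combined with the algebro-geometric translation of $v_p(f)=1$ into étaleness.
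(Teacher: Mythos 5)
Your proposal is correct and follows essentially the same route as the paper: reduce to showing that $f$, viewed as a morphism from a neighbourhood of $p$ to the line, is \'etale at $p$, and then apply Proposition \ref{P:etale-in-t-hen}. The paper verifies \'etaleness via the local-ring characterization (trivial residue extension at $K$-rational points plus $\mathcal{M}_{f(p)}\mathcal{O}_p=\mathcal{M}_p$, which is exactly your uniformizer/ramification-index-one observation), so the only cosmetic difference is which standard equivalence of \'etaleness one cites.
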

\begin{proof}
By Proposition \ref{P:etale-in-t-hen}, the proof boils down to showing that $f$ is \'etale at $p$.
Since we could not find any good source for this fact we add the (easy) proof.

To any $f$ as above corresponds a non-constant morphism $f:C\to \mathbb{P}^1_K$ where $\mathbb{P}^1_K$ is the projective line over $K$. There is a corresponding $K$-algebra embedding of fields $f^*:K(\mathbb{P}^1_K)\to K(C)$. Assume that $K(C)\cong K(c)$ for some tuple $c$. Replacing $K(\mathbb{P}^1_K)$ with the isomorphic subfield of $K(C)$, we may assume we have a field extension $K(c)/K(f(c))$. 

By \cite[Tag 02GU $(6)\Leftrightarrow (8)$]{stacks-project}, the rational map $f$ is \'etale at $p$ if and only if
\begin{enumerate}
\item $\mathcal{O}_{f(p)}/\mathcal{M}_{f(p)}\hookrightarrow \mathcal{O}_p/\mathcal{M}_p$ is a finite separable field extension and
\item $\mathcal{M}_{f(p)}\mathcal{O}_p=\mathcal{M}_p.$
\end{enumerate}

Since both $p$ and $f(p)$ are $K$-rational points, $\mathcal{O}_{f(p)}/\mathcal{M}_{f(p)}= \mathcal{O}_p/\mathcal{M}_p=K$. For the second condition, since the valuation $v_p$ of $K(c)$ is an extension of $v_{f(p)}$ on $K(f(c))$, $f(c)$ is a uniformizer of $v_{f(p)}$ and the result follows.

\end{proof}

Let $C$ be as before. A \emph{divisor} on $C$ is an element of the free abelian group generated by the set $\{x_v:v\in \mathcal{V}(K(c)/K)\}$, where $\mathcal{V}(K(c)/K)$ is the set of discrete valuations on $K(C)$ which are trivial on $K$. So each element is a finite sum of elements of the form $a_vx_v$, where $a_v$ are integers. A divisor is called effective if $a_v\geq 0$ for all $v$. The degree of a divisor $D=\sum a_vx_v$ is defined to be $\deg(D):=\sum a_v\cdot [\mathcal{O}_v/\mathcal{M}_v:K]$. Each $f\in K(C)^\times$ corresponds to a divisor $\mathrm{div}(f)=\sum\limits_{v\in \mathcal{V}(K(C)/K)}v(f)x_v$, see \cite[Proposition VII.4.11]{Lorenzini}.

For each divisor $D$ of $C$ the set $H^0(D):=\{f\in K(C): \mathrm{div}(f)+D \text{ is effective}\}$ is a finite dimensional vector space over $K$ with dimension $h^0(D)$. The Riemann-Roch theorem implies that $h^0(D)\geq \mathrm{deg}(D)+1-g$, where $g$ is the genus of $C$, see \cite[Section IX.3]{Lorenzini}. As an application, we prove:

\begin{lemma}\label{L:divisor}
Let $C$ be a geometrically integral nonsingular projective curve over a perfect field $K$, and $X$ a subset of $C(K)$ of cardinality larger than the genus of $C$. Then there exists a non-constant rational function $f\in K(C)$ such that $f^{-1}(0)$ is a sum of distinct points in $X$ with no multiplicities.
\end{lemma}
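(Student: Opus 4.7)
The plan is to apply Riemann--Roch to a divisor supported on enough points of $X$, and then pass to the reciprocal so that "poles in $X$" becomes "zeros in $X$".

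Denote the genus of $C$ by $g$. Since $|X|>g$, we can choose $g+1$ distinct $K$-rational points $P_1,\dots,P_{g+1}\in X$. Let $D:=P_1+\cdots+P_{g+1}$. Each $P_i$ is a $K$-rational point, so its residue field is $K$ itself and its degree (as a divisor) is $1$; hence $\deg(D)=g+1$. Applying the Riemann--Roch inequality quoted just before the lemma gives
\[
h^{0}(D)\geq \deg(D)+1-g \;=\; 2.
\]
As $H^{0}(D)$ always contains the one-dimensional subspace of constants, this bound guarantees the existence of some $h\in H^{0}(D)\setminus K$.

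By definition of $H^{0}(D)$, the divisor $\mathrm{div}(h)+D$ is effective, which translates into $v_{P_i}(h)\geq -1$ for each $i$ and $v_Q(h)\geq 0$ for every other discrete valuation $Q\in \mathcal{V}(K(C)/K)$. In particular, the only possible poles of $h$ lie in $\{P_1,\dots,P_{g+1}\}\subseteq X$ and each such pole is simple. Now set $f:=1/h$; this is a non-constant element of $K(C)^\times$ with $\mathrm{div}(f)=-\mathrm{div}(h)$, so the zero divisor of $f$ coincides with the pole divisor of $h$. By the previous sentence, this pole divisor is of the form $\sum_{i\in J}P_i$ for some $J\subseteq\{1,\dots,g+1\}$ with each coefficient equal to $1$; and $J$ is nonempty because a non-constant rational function on a projective curve must have at least one pole (equivalently, the constants are the only global regular functions on a projective curve). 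Hence $f^{-1}(0)$ is a sum of distinct points of $X$ with no multiplicities, as required.

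There is no real obstacle here: the only thing to be careful about is that the points $P_i$ are $K$-rational, which is what makes $\deg(D)=g+1$ and forces the inequality $h^{0}(D)\geq 2$; if one worked with general closed points of residue degree $>1$, one would need more of them. The use of $f=1/h$ is the standard trick to convert the Riemann--Roch output (which naturally controls poles) into control of zeros.
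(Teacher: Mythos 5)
Your proof is correct and follows essentially the same route as the paper's: take the divisor $D=P_1+\cdots+P_{g+1}$ of $g+1$ distinct $K$-rational points of $X$, use Riemann--Roch to get a non-constant $h$ with $\mathrm{div}(h)+D$ effective, and pass to $f=1/h$. Your version just spells out a few details (the degree computation for $K$-rational points, the non-emptiness of the pole set) that the paper leaves implicit.
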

\begin{proof}
Let $g$ be the genus of $C$ and let $p_1,\dots,p_{g+1}$ be distinct points in $X\subseteq C(K)$.

Let $D$ be the divisor $\sum_j p_j$ on the curve $C$. By Riemann-Roch $h^0(D)\geq \deg(D)+1-g=2$.

We can thus find a non-constant $g\in K(C)$, with $\mathrm{div}(g)+D$ effective, i.e. the divisor of poles of $g$ is a subset of $D$, so every pole has multiplicity $1$ and is in $X$. Take $f=1/g$.
\end{proof}

\begin{lemma}\label{L:U-infinite}
Let $(K,\tau)$ be a t-henselian V-topological field and assume that $K$ is perfect. Let $C$ be a geometrically integral nonsingular projective curve over $K$. Then every $\tau$-open non-empty subset $U\subseteq C(K)$ is infinite.
\end{lemma}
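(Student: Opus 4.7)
The plan is to reduce the statement to the elementary fact that every nonempty $\tau$-open subset of $K$ itself is infinite, by transporting the question along a uniformizer at a chosen point of $U$ via Proposition \ref{P:v-top-implicit-etale}.

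First I would fix an arbitrary point $p \in U$. Since $C$ is a nonsingular projective curve, the local ring $\mathcal{O}_p \subseteq K(C)$ is a discrete valuation ring, so it admits a uniformizer, i.e., some $f \in K(C)^\times$ with $v_p(f)=1$. By Proposition \ref{P:v-top-implicit-etale}, there is a $\tau$-open neighborhood $W$ of $p$ in $C(K)$ such that $f\restriction W$ is a $\tau$-homeomorphism onto its image in $\mathbb{P}^1(K)$. Since $v_p(f)=1>0$ forces $f(p)=0\in K\subseteq\mathbb{P}^1(K)$, and since $K$ is $\tau$-open in $\mathbb{P}^1(K)$, after replacing $W$ by $W\cap f^{-1}(K)$ we may assume $f(W)$ is a $\tau$-open neighborhood of $0$ in $K$.

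Next I would verify the purely topological claim that every nonempty $\tau$-open subset of $K$ is infinite. By definition of a V-topology, $(K,\tau)$ is a non-discrete Hausdorff topological field. If some nonempty open $V\subseteq K$ were finite, then, finite sets being closed in a Hausdorff space, $V\smallsetminus\{a\}$ would be closed for every $a\in V$, so $\{a\}=V\cap(K\smallsetminus(V\smallsetminus\{a\}))$ would be $\tau$-open. Translation invariance then makes $\{0\}$ open, contradicting non-discreteness.

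Combining the two, $f(U\cap W)$ is a nonempty $\tau$-open subset of $K$ (it contains $f(p)=0$), hence infinite; since $f\restriction W$ is injective (it is a homeomorphism onto its image), it follows that $U\cap W$, and a fortiori $U$, is infinite. The real content has been absorbed into Proposition \ref{P:v-top-implicit-etale}; I do not anticipate any substantive obstacle beyond the bookkeeping above, and in particular no further use of t-henselianity is needed at this step.
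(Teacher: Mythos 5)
Your proof is correct, but it takes a genuinely different and in fact more economical route than the paper. The paper first argues that $C(K)$ is infinite (via the fact that a t-henselian field is elementarily equivalent to a henselian, hence large, field), picks a second point $q\neq p$, and then applies Riemann--Roch to the divisor $p+g\cdot q$ to manufacture a non-constant $f\in K(C)$ with $v_p(f)=1$; only then does it invoke Proposition \ref{P:v-top-implicit-etale}. You observe instead that, since $C$ is nonsingular, $\mathcal{O}_{p,C}$ is already a discrete valuation ring of $K(C)$, so a uniformizer gives the required $f$ with $v_p(f)=1$ for free -- the hypothesis of Proposition \ref{P:v-top-implicit-etale} is purely local at $p$, and the global control on the zero divisor that Riemann--Roch provides (and which is genuinely needed in Lemma \ref{L:divisor} and Proposition \ref{P:approx-for-curves}) is irrelevant here. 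This eliminates both the largeness step and the auxiliary point $q$. Your remaining steps are sound: the image $f(W)$ is an open neighbourhood of $0$ in $K$ by Proposition \ref{P:etale-in-t-hen} (whose proof shows the image is open in $Y(K)$), and your verification that a nonempty $\tau$-open subset of $K$ is infinite -- finite sets are closed in a Hausdorff field topology, so a finite open set would make singletons, hence $\{0\}$ by translation, open, contradicting non-discreteness -- supplies a detail the paper only asserts. The trade-off is negligible: the paper's detour buys nothing for this particular lemma, so your argument is simply the leaner one.
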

\begin{proof}
Since every henselian field is large and being large is an elementary property, see \cite{large}, $K$ is also large. Indeed, every t-henselian V-topological field is elementary equivalent to a henselian field by \cite[Theorem 7.2]{PrZ1978}. Thus $C(K)$ is infinite.

Consider the divisor $p+g\cdot q$, where $p\in U$, $p\neq q\in C(K)$ and $g$ is the genus of the curve. As in the proof of Lemma \ref{L:divisor}, there exists a non-constant rational function $f\in K(C)$ with $v_p(f)=1$. By Proposition \ref{P:v-top-implicit-etale}, and since every $\tau$-open subset of $K$ is infinite, $U$ is infinite.
\end{proof}

%

%

\subsection{Model Theory and NIP}\label{ss:NIP}
We will assume knowledge of some basic model theory, see e.g. \cite{TZ}  and \cite{Sim2015}. We recall some facts regarding NIP theories, i.e. first order theories that do not have the independence property. 
As a matter of notational convenience, below and throughout for a tuple $a$ by ``$a$ is in $M$'' or $a\in M$ we mean  $a\in M^{|a|}$.

\begin{definition}
Let $T$ be a first order theory in a language $\mathcal L$. A first order $\mathcal L$-formula $\varphi(x,y)$ has NIP (is dependent) with respect to $T$ if for some integer $k$, there are no model $M$ of $T$ and families of tuples  
$(a_i: i<k)$ and $(b_J: J\subseteq [k])$ in $M$ with
\[M\models \varphi(a_i,b_J) \Leftrightarrow i\in J.\]
The theory $T$ has NIP if every formula has NIP with respect to $T$. 
\end{definition}

A structure $\mathcal M$ is NIP if its theory is, and if $\mathcal M$ is NIP and $\mathcal N$ is interpretable in $\mathcal M$ then $\mathcal N$ is also NIP. This fact implies that, for example, a valued field has NIP in the three sorted language (with the base field, the value group and the residue field with the obvious maps between them) if and only if it has NIP in the language of rings expanded by a unary predicate for the valuation ring.  
%

Let $M$ be a first order structure, and $M\prec N$ an $|M|^+$-saturated elementary extension. The \emph{Shelah expansion of $M$}, denoted  $M^{sh}$, is $M$ augmented by a predicate for each externally definable subset of $M$ (i.e. subsets of the form $\varphi(M,c)$ for some $c\in N$). A priori, the Shelah expansion depends on $N$ but in reality all $|M|^+$-saturated expansions induce the same definable subsets, see \cite[Section 3.1.2]{Sim2015}.

\begin{fact}\cite[Proposition 3.23, Corollary 3.24]{Sim2015}
Let $M$ be a NIP structure. Then $M^{sh}$ admits quantifier elimination  and consequently, $M^{sh}$ has NIP as well.
\end{fact}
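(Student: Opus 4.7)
The plan is to prove the quantifier elimination and deduce the NIP assertion formally. Since the new predicates of $M^{sh}$ are Boolean-combination-closed by design, QE reduces to eliminating a single existential quantifier: given an $\CL$-formula $\varphi(x,y;\bar z)$ and $\bar c\in N$, one must show that
\[
\pi(X):=\{a\in M^{|x|}:\exists b\in M,\ N\models\varphi(a,b;\bar c)\}
\]
is externally definable in $M$. Note that this is not simply $\exists y\,\varphi(x,y;\bar c)(M)$: the existential ranging over $M$ can be strictly smaller than the one ranging over $N$, and it is precisely the gap between the two that NIP is needed to close.

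The heart of the proof is the \emph{honest definitions} lemma of Chernikov--Simon: for each NIP $\CL$-formula $\psi(x;\bar z)$ there is an $\CL$-formula $\theta(x;\bar w)$ such that, for every $\bar c$ in an elementary extension of $M$ and every finite $A_0\subseteq \psi(N,\bar c)\cap M^{|x|}$, some $\bar d\in M$ satisfies $A_0\subseteq \theta(M,\bar d)\subseteq \psi(N,\bar c)$. This is the only point where NIP is genuinely used; its proof runs through the equivalence of NIP with finite VC-dimension together with the Alon--Kleitman--Matou\v{s}ek $(p,q)$-theorem (equivalently, the existence of $\varepsilon$-nets of bounded size). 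Applying the lemma to $\varphi$ in the joint variable $(x,y)$ yields $\theta(x,y;\bar w)$, and the partial type
\[
\Sigma(\bar w):=\{\exists y\,\theta(a,y;\bar w):a\in \pi(X)\}\cup\{\neg\exists y\,\theta(a,y;\bar w):a\in M^{|x|}\setminus\pi(X)\}
\]
is finitely satisfiable in $M$: given finitely many points, collect witnesses $b_a\in M$ for the ``positive'' ones, apply honest definitions to sandwich these pairs inside $X$, and project. In a sufficiently saturated $N'\succeq N$ some $\bar e\in N'$ realises $\Sigma$, and then $\pi(X)=\{a:N'\models\exists y\,\theta(a,y;\bar e)\}$, which exhibits $\pi(X)$ as externally definable.

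The NIP assertion is then formal: after QE every $M^{sh}$-formula is a Boolean combination of atomic predicates that are traces on $M$ of $\CL$-formulas with parameters in $N$; since $N\equiv M$ has NIP these traces have NIP, and NIP is preserved under Boolean combinations. The main obstacle in the entire argument is the honest-definitions lemma, which packages all of the combinatorial content of NIP into a single usable form; the reduction to eliminating one existential quantifier and the compactness extraction of the external parameter $\bar e$ are, by comparison, bookkeeping.
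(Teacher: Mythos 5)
The paper offers no proof of this statement---it is quoted as a known Fact with a citation to \cite[Proposition 3.23, Corollary 3.24]{Sim2015}---and your argument is essentially the proof given in that reference: reduce quantifier elimination to showing that the projection (with witness ranging over $M$) of an externally definable set is externally definable, use honest definitions as the sole point where NIP enters, extract the external parameter by compactness from a type finitely satisfiable in $M$, and then deduce NIP of $M^{sh}$ formally from quantifier elimination. The only cosmetic difference is that you invoke the uniform, finite-set version of honest definitions (resting on the $(p,q)$-theorem) where the cited proof can also be run with the non-uniform pair-construction version; both work, and your argument is correct.
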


Note that if $(K,v)$ is a valued field then any coarsening $w$ of $v$ is externally definable, implying the following useful fact that  we will use implicitly throughout:

\begin{fact}
Let $K$ be a NIP field (possibly with additional structure) and $v$ a definable valuation.  Then for every coarsening $w$ of $v$, $(K,v,w)$ has NIP.
\end{fact}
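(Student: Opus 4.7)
The plan is to derive the Fact directly from the Shelah-expansion machinery quoted just above, using the fact, announced in the paragraph immediately preceding the statement, that any coarsening is externally definable.

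First I would verify external definability of $\mathcal{O}_w$ in $(K,v)$. A coarsening $w$ of $v$ corresponds to a convex subgroup $\Delta\leq vK$, and one has $\mathcal{O}_w=\{x\in K:v(x)\geq \delta\text{ for some }\delta\in\Delta\}$. Passing to a sufficiently saturated elementary extension $(N,v_N)\succ (K,v)$, by compactness one finds an element $\gamma\in v_N N$ realising the cut above $\Delta$ and below the positive part of $vK\setminus\Delta$; equivalently, choose $c\in N$ with $v_N(c)=-\gamma$. Then
\[
\mathcal{O}_w=\{x\in K:(N,v_N)\models v_N(xc)>0\},
\]
so $\mathcal{O}_w$ is an externally definable subset of $(K,v)$ (with parameter $c$). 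Hence the coarsening $w$ — as a unary predicate for its valuation ring — is present in the Shelah expansion $(K,v)^{sh}$.

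Next I would invoke NIP preservation. Since $v$ is definable in the NIP structure $K$, expanding by a predicate for $\mathcal{O}_v$ does not change the definable sets, so $(K,v)$ is NIP. By the fact quoted from \cite[Proposition 3.23, Corollary 3.24]{Sim2015}, the Shelah expansion $(K,v)^{sh}$ is NIP. The structure $(K,v,w)$ is a reduct of $(K,v)^{sh}$, so it is NIP as well.

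There is no real obstacle: the only point that requires a small check is producing the parameter $\gamma$ in the value group of the elementary extension, and this is a standard saturation/compactness argument using convexity of $\Delta$. Everything else is formal application of the preceding Fact and of the trivial principle that reducts of NIP structures are NIP.
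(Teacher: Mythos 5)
Your proof is correct and follows essentially the same route as the paper's one-line argument (convexity of $\Delta$ gives external definability of the coarsening, hence definability in the NIP Shelah expansion, of which $(K,v,w)$ is a reduct). The only quibble is a sign slip: with $\gamma$ chosen \emph{above} $\Delta$ your displayed set $\{x: v_N(xc)>0\}$ is $\mathcal{M}_w$ rather than $\mathcal{O}_w$; either take $\gamma$ realising the cut just \emph{below} $\Delta$, or note that $\mathcal{O}_w=\{0\}\cup\{x: x^{-1}\notin \mathcal{M}_w\}$, so the conclusion is unaffected.
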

\begin{proof}
Every coarsening of $v$ is uniquely determined by a convex subgroup of $vK$. As a convex subset it is an externally definable set and hence definable in the Shelah expansion of $K$. Thus adding a predicate for the coarsening preserves NIP.
\end{proof}

\begin{remark}
In what follows, we will refer to theories that are ``strongly dependent'' or of ``finite dp-rank''. We will not be using explicitly these notions and it is enough for our needs to note that these are sub-classes of NIP theories (see \cite[Chapter 4]{Sim2015}), which -- like NIP theories -- are stable under interpretations and expansions by externally definable sets. 
\end{remark}

\section{Definable V-topologies}\label{ss:def-V}

There are several languages for studying V-topological fields as first-order structures. Two natural choices for such languages are: adding a sort for the topology (as was done in \cite{PrZ1978}) and adding a symbol for a valuation or an absolute value inducing the V-topology.  In order not to commit ourselves to one such approach, we suggest the following common generalization: 

\begin{definition}
A field $K$ (possibly with additional structure) is \emph{elementarily V-topological} if it admits a uniformly definable 0-neighbourhood base for a V-topology. In this case, we also say that $K$ admits a \emph{definable V-topology}.
\end{definition} 


As we will see below, fairly little is needed for a V-topological field to be elementarily V-topological. For the next lemma we recall that a subset $A\subseteq K$ of a V-topological field is \emph{bounded} if it is contained in an open ball (with respect to a valuation or an absolute value inducing the topology, recall Section \ref{ss:V-topAG}). This definition does not depend on the choice of the valuation (see \cite[Appendix B]{EnPr}).  

\begin{lemma}\label{L:def-top}
The following are equivalent for a V-topological field $(K,\tau)$:
\begin{enumerate}
\item $K$ is elementarily V-topological;
\item $K$ has a definable bounded neighbourhood of $0$;
\item $K$ has a 0-neighbourhood base consisting of definable sets. 
\end{enumerate}

In particular, every $L\equiv K$ has a definable V-topology making it a V-topological field.
\end{lemma}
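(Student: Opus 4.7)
The plan is to prove the cycle $(1)\Rightarrow(3)\Rightarrow(2)\Rightarrow(1)$ and then deduce the ``in particular'' clause by noting that the relevant statements are first-order.

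For $(1)\Rightarrow(3)$ there is nothing to do: a uniformly definable $0$-neighbourhood base is, in particular, a $0$-neighbourhood base consisting of definable sets. For $(3)\Rightarrow(2)$, recall that a V-topology is induced by a valuation or an archimedean absolute value, so any open ball of finite radius around $0$ is a bounded neighbourhood of $0$. If $\{U_i\}$ is a $0$-neighbourhood base of definable sets, pick a bounded open neighbourhood $B$ of $0$ coming from the inducing valuation/absolute value; then some $U_{i_0}\subseteq B$ is definable and bounded.

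The main content is $(2)\Rightarrow(1)$. Given a definable bounded neighbourhood $U$ of $0$, I will show that the family
\[
\bigl\{aU : a\in K^\times\bigr\}
\]
is a uniformly definable $0$-neighbourhood base, via the formula $\varphi(x,y):=y\neq 0\wedge \exists u\,(u\in U\wedge x=yu)$. Since scalar multiplication by $a\neq 0$ is a homeomorphism, each $aU$ is a neighbourhood of $0$. It remains to show that the sets $aU$ can be made arbitrarily small. If $\tau$ is induced by a valuation $v$, boundedness of $U$ gives $\gamma\in vK$ with $U\subseteq\{x:v(x)\geq-\gamma\}$, while $U$ being a neighbourhood of $0$ gives $\delta\in vK$ with $\{x:v(x)\geq\delta\}\subseteq U$. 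Thus $aU\subseteq\{x:v(x)\geq v(a)-\gamma\}$, and letting $v(a)\to\infty$ makes $aU$ arbitrarily small in $\tau$. The argument for an absolute value is identical with $|\cdot|$ in place of $v$. This is the step that requires the most care but is essentially routine.

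Finally, for the ``in particular'' clause: suppose $K$ is elementarily V-topological, witnessed by a formula $\varphi(x,y)$ and a definable index set $A$, so that $\{\varphi(K,a):a\in A\}$ is a $0$-neighbourhood base for a V-topology. The axioms characterising a $0$-neighbourhood base of a V-topology can all be written as first-order sentences involving $\varphi$ and $A$:
\begin{itemize}
\item $0\in \varphi(K,a)$ for all $a\in A$;
\item for all $a_1,a_2\in A$ there is $a_3\in A$ with $\varphi(K,a_3)\subseteq \varphi(K,a_1)\cap \varphi(K,a_2)$;
\item for all $a\in A$ there is $b\in A$ with $\varphi(K,b)+\varphi(K,b)\subseteq \varphi(K,a)$, and similarly for multiplication and for continuity of inverse away from $0$;
\item the V-property: for all $a\in A$ there is $b\in A$ with $xy\in\varphi(K,b)\Rightarrow x\in\varphi(K,a)\vee y\in\varphi(K,a)$.
\end{itemize}
All of these are first-order schemes, hence transfer from $K$ to any $L\equiv K$, so the same formula $\varphi$ witnesses that $L$ is elementarily V-topological.
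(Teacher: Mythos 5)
Your proof is correct and follows essentially the same route as the paper: the implications $(1)\Rightarrow(3)\Rightarrow(2)$ are treated as immediate, $(2)\Rightarrow(1)$ uses the scaled family $\{aU:a\in K^\times\}$ as the uniformly definable base, and the transfer to $L\equiv K$ is obtained by expressing ``this family is a $0$-neighbourhood base for a V-topology'' as a first-order condition. You simply spell out more of the routine verifications than the paper does.
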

\begin{proof}
$(1)\implies (3)\implies (2)$ are obvious. For $(2)\implies (1)$ note that if $U$ is a definable bounded neighbourhood of $0$ then $\{cU:c\in K^\times\}$ is a uniformly definable $0$-neighbourhood base. 

As for the concluding part of the lemma, we apply (1): assume that $\phi(x,\bar y, \bar a)$ is a $K$-definable $0$-neighbourhood base for the topology. Then the statement ``there exists $\bar z$ such that $\phi(x,\bar y, \bar z)$ is a $0$-neighbourhood base for a V-topology'' is elementary. 
%
%
%
\end{proof}
In the arXiv preprint version of \cite{JSW}, V-topologies with
a definable bounded neighbourhood of $0$ were called \emph{definable type 
V topologies}.

The above lemma allows us to replace any given elementarily V-topological field with a sufficiently saturated extension which, in turn, allows us to assume that there is a valuation inducing the V-topology. Though essentially folklore, we add the proof for completeness.

\begin{lemma}\label{L:from absolute-to-valuation}
Every $\aleph_0$-saturated elementarily V-topological field admits a non-trivial non rank-one valuation inducing the V-topology.
\end{lemma}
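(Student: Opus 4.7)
The plan is to use $\aleph_0$-saturation in two steps: first to produce a valuation $v$ on $K$ inducing $\tau$, and then to force $vK$ to have a proper non-trivial convex subgroup, so that $v$ has rank at least two.

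I will fix a definable bounded $0$-neighbourhood $U$, so that $\{cU:c\in K^\times\}$ is a uniformly definable $0$-neighbourhood base (Lemma~\ref{L:def-top}). By the D\"{u}rbaum--Kowalsky theorem (see Section~\ref{ss:V-topAG}), $\tau$ is induced by either a valuation or an absolute value on $K$. In the first case, take $v$ to be the given valuation. In the second case, take $v$ to be the natural valuation
\[
\mathcal{O}_v := \{x\in K : |x|\leq n \text{ for some } n\in\mathbb{N}\},
\]
which is a valuation ring by the standard check that $|x|>n$ for all $n$ forces $|x^{-1}|<1/n$ for all $n$; it is non-trivial because $\aleph_0$-saturation yields $x\in K$ with $|x|>n$ for all $n\in\mathbb{N}$. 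A sandwich argument -- crucially invoking $\aleph_0$-saturation to realise, for each $c\in K^\times$, some $\epsilon\in K_{>0}$ with $n\epsilon<|c|$ for all $n\in\mathbb{N}$ -- then shows that this $v$ induces $\tau$. In either case $v$ is a non-trivial valuation on $K$ inducing $\tau$.

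Next, to show $v$ is not of rank one: boundedness of $U$ gives a neighbourhood $W$ of $0$ with $WU\subseteq U$, and since $W\cap\{x:v(x)>0\}$ is a $\tau$-neighbourhood of $0$, I can pick $a\in K^\times$ with $v(a)>0$ and $aU\subseteq U$. The sets $a^nU$ then form a decreasing sequence of non-trivial $\tau$-neighbourhoods of $0$, and I will consider the partial type, over the finite parameter set $\{a\}\cup\operatorname{params}(U)$,
\[
p(y)\;=\;\{y\neq 0\}\cup\{\, y\in a^nU\,:\, n\in\mathbb{N}\,\}.
\]
Finite consistency is clear since $\bigcap_{n\in F}a^nU=a^{\max F}U$ is a non-trivial $\tau$-neighbourhood of $0$. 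By $\aleph_0$-saturation, $p$ is realised by some $y_0\in K^\times$. Boundedness of $U$ provides $\delta\in vK$ with $U\subseteq\{v(x)>\delta\}$, whence $v(y_0)>\delta+nv(a)$ for every $n\in\mathbb{N}$. Since $v(a)>0$ and $v(y_0)\in vK$, the element $v(y_0)$ lies outside the convex subgroup $\langle v(a)\rangle\leq vK$, while $\langle v(a)\rangle$ is non-trivial. Thus $\langle v(a)\rangle$ is a proper non-trivial convex subgroup of $vK$, so $v$ has rank at least two.

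The main obstacle is the absolute-value case of the first step: verifying via $\aleph_0$-saturation that the natural valuation does indeed induce the topology $\tau$ rather than a strictly finer topology. This is where $\aleph_0$-saturation is essentially used in the first step, supplying the infinitesimal radii required for each $v$-ball around a point to be expressed as a union of $\tau$-balls.
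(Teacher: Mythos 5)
Your second step (realising the type $\{y\neq 0\}\cup\{y\in a^nU : n\in\mathbb{N}\}$ and reading off a proper non-trivial convex subgroup of $vK$) is correct, and is essentially the paper's rank-one argument in contrapositive form. The genuine gap is in the first step, in the absolute-value case. An archimedean absolute value is by definition real-valued, so \emph{every} $x\in K$ satisfies $|x|\leq n$ for some $n\in\mathbb{N}$: your ``natural valuation'' ring $\mathcal{O}_v=\{x:|x|\leq n\text{ for some }n\}$ is all of $K$, i.e.\ $v$ is trivial and induces the discrete topology, not $\tau$. Correspondingly, the claim that $\aleph_0$-saturation yields $x\in K$ with $|x|>n$ for all $n\in\mathbb{N}$ is false -- no real number exceeds every natural number. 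You seem to be importing the natural valuation of a non-archimedean \emph{ordered} field (convex hull of $\mathbb{Z}$ with respect to the field's own, possibly non-archimedean, order), which has no analogue for a real-valued absolute value.

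The correct use of saturation here is the opposite one: the absolute-value case cannot occur at all. Since $\tau$ is definable, choose definable sets $U_n\subseteq\{x:|x|<1/n\}$; by $\aleph_0$-saturation the countable type $\{y\neq 0\}\cup\{y\in U_n:n\in\mathbb{N}\}$ is realised, yet $\bigcap_n\{x:|x|<1/n\}=\{0\}$, a contradiction. (This is exactly the paper's proof; the same argument rules out rank-one valuations, whereas you handle that case by your direct convex-subgroup computation -- either works.) With the absolute-value case eliminated, D\"urbaum--Kowalsky gives a valuation inducing $\tau$ and your second step goes through. One small additional point there: if the $\delta\in vK$ with $U\subseteq\{v(x)>\delta\}$ is negative, the inequality $v(y_0)>\delta+nv(a)$ does not immediately place $v(y_0)$ outside $\langle v(a)\rangle$; but if $v(y_0)\leq mv(a)$ for some $m$, the same inequalities force $\delta<-kv(a)$ for all $k$, so $\langle v(a)\rangle$ is a proper non-trivial convex subgroup in either case.
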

\begin{proof}
Let $K$ be an $\aleph_0$-saturated elementary V-topological field. Assume there exists an absolute value $|\cdot|$ inducing the definable V-topology. Since the V-topology is definable, for any $n\in \mathbb{N}$, we may choose a definable set $U_n$ contained in $\{x\in K: |x|<1/n\}$. By $\aleph_0$-saturation there exists a non-zero element in $\bigcap_{n\in \mathbb{N}} U_n$, contradiction.

The proof for a rank-one valuation is similar.
\end{proof}

At this level of generality, we can not expect elementarily V-topological fields to admit a definable valuation, as witnessed by any non-archimedean real closed field. Below we show, however, that any sufficiently saturated such field admits an externally definable valuation.

The following argument, due to Kaplan, shows that, under reasonable assumptions,  $\bigvee$-definable sets are externally definable.

\begin{lemma}[Kaplan]\label{L:itay}
Let $M\prec N\prec \mathcal{U}$ be first order structures, in any language, with $\mathcal{U}$ a monster model and $M,N$ small. Let $\varphi(x,y)$ be a formula and $\{\varphi(x,a_i):a_i\in N, i<\alpha\}$ a small family of instances of $\varphi(x,y)$ satisfying $\varphi(M,a_i)\neq \emptyset$ for every $i<\alpha$. If for every $i<j<\alpha$, $\varphi(M,a_i)\subseteq (M,a_j)$ then there exists some $c\in \mathcal{U}$ such that 
\[\bigcup_{n<\alpha} \varphi(M,a_n)=\varphi(M,c).\]
\end{lemma}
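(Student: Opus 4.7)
The plan is a straightforward compactness argument, exploiting the increasing chain assumption together with the elementarity $M \prec N \prec \mathcal{U}$. Set
\[
X := \bigcup_{i<\alpha} \varphi(M,a_i) \quad \text{and} \quad Y := M \setminus X,
\]
and consider, in a free variable $y$, the partial type
\[
p(y) := \{\varphi(m,y) : m \in X\} \cup \{\neg\varphi(m',y) : m' \in Y\}.
\]
Since $M$ is small, so is $p(y)$. The goal is to realize $p(y)$ in $\mathcal{U}$: any realization $c$ will satisfy $\varphi(M,c) \cap M = X$, which (restricting to $M$) is exactly the claim, since both sides are subsets of $M$.

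For consistency, I would work with an arbitrary finite subset of $p(y)$, involving parameters $m_1,\dots,m_k \in X$ and $m_1',\dots,m_\ell' \in Y$. Each $m_j$ lies in some $\varphi(M,a_{i_j})$, so, taking $i := \max_j i_j$ and using the monotonicity hypothesis $\varphi(M,a_{i_j}) \subseteq \varphi(M,a_i)$, we get $M \models \varphi(m_j,a_i)$ for every $j \le k$. Meanwhile each $m_s'$ lies outside the entire union $X$, hence in particular outside $\varphi(M,a_i)$, so $M \models \neg\varphi(m_s',a_i)$. By $M \prec N$ these statements lift to $N$, showing that $a_i \in N$ witnesses the finite subset. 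Since $N \prec \mathcal{U}$, the finite subset is satisfiable in $\mathcal{U}$.

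By compactness (and saturation of the monster) the full type $p(y)$ is realized by some $c \in \mathcal{U}$, and then $\varphi(M,c) = X$ by construction. There is no genuine obstacle here: the only slightly subtle point is making sure $p(y)$ really is small (guaranteed because $M$ is small) and that one has the correct elementary chain $M \prec N \prec \mathcal{U}$ so that satisfaction by $a_i \in N$ transfers upward to $\mathcal{U}$. The monotonicity hypothesis is exactly what lets a single $a_i$ handle all finitely many positive conditions at once, which is the crux of the argument.
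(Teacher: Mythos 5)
Your proof is correct, but it takes a more elementary route than the paper. You write down the partial type $p(y)=\{\varphi(m,y):m\in X\}\cup\{\neg\varphi(m',y):m'\in Y\}$ over $M$ and verify finite satisfiability directly: the monotonicity of the chain lets a single $a_i\in N$ (with $i$ the maximum of the finitely many relevant indices) witness any finite fragment, and saturation of $\mathcal{U}$ over the small set $M$ then yields $c$. The paper instead takes an ultrafilter $D$ on $N$ extending the tail filter $\{\{a_i:i>n\}:n<\alpha\}$, forms the global $D$-average type $p_D$ (which is finitely satisfiable in $N$, i.e.\ a coheir), and lets $c\models p_D|N$; the two inclusions are then checked from the definition of $D$. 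The two arguments are morally the same -- both ultimately rest on finite satisfiability in $N$ plus saturation -- but your version is shorter and only controls $\varphi(m,c)$ for $m\in M$, which is all the lemma asks, whereas the paper's $c$ realizes a canonical limit type over all of $N$. One cosmetic slip: since $a_i\in N$ rather than $M$, the positive and negative conditions should be read as $N\models\varphi(m_j,a_i)$ and $N\models\neg\varphi(m_s',a_i)$ (equivalently in $\mathcal{U}$ by $N\prec\mathcal{U}$); no appeal to $M\prec N$ is needed there, only the fact that $\varphi(M,a_i)$ is by definition computed in $N$ (or $\mathcal{U}$). This does not affect the validity of the argument.
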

\begin{proof}
Let $D$ be an ultrafilter on $N$ extending the filter base $\left\{ \{a_i:i>n\}: n<\alpha\right\}$.
By \cite[Example 2.17(1)]{Sim2015} there exists a global type $p_D(y)$, defined by
\[p_D\vdash \psi(b,y) \iff \psi(b,N)\in D.\]

Let $c\models p_D|N$. We claim that \[\bigcup_{n<\alpha} \varphi(M,a_n)=\varphi(M,c).\]
Indeed, if $d\in M$ such that $\varphi(d,c)$ holds, then, by the definition of $p_D$, $\varphi(d,N)\in D$. Consequently, by the definition of $D$, there exists $i<\alpha$ such that $a_i\in \varphi(d,N)$, as needed.

For the other direction, assume that $\varphi(d,a_n)$ holds for some $n<\alpha$. Since the family is increasing, $\{a_i: i\geq n\}\subseteq \varphi(d,N)$ so $\varphi(d,N)\in D$ and thus finally $\varphi(d,c)$.
\end{proof}

Using the above lemma we can prove, following closely \cite[Appendix B]{EnPr}: 


\begin{proposition}\label{P:defV=extdef}
Every $\aleph_0$-saturated elementarily V-topological field admits an externally definable valuation inducing the V-topology.
\end{proposition}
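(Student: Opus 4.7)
The plan is to exhibit $\mathcal{O}_v$, for some valuation $v$ inducing $\tau$, as an externally definable subset of $K$, via Kaplan's Lemma \ref{L:itay} applied in the spirit of the classical construction of \cite[Appendix B]{EnPr}.

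By Lemma \ref{L:from absolute-to-valuation} we may assume $\tau$ is induced by a valuation $v$ on $K$; let $U$ be a definable bounded neighbourhood of $0$ (Lemma \ref{L:def-top}) and fix $\alpha \in vK$ with $B_\alpha(K) \subseteq U$. Mirroring the characterisation of $\mathcal{O}_v$ via bounded elements in \cite[Appendix B]{EnPr}, one sees that for any $c \in K^\times$ with $v(c) > \alpha$,
\[
\mathcal{O}_v \;=\; \bigcap_{n \in \mathbb{N}} \{b \in K : c b^n \in U\}.
\]
Indeed, for $v(b) \geq 0$ we have $v(cb^n) \geq v(c) > \alpha$, so $cb^n \in B_\alpha(K) \subseteq U$; while for $v(b) < 0$, $v(cb^n) = v(c) + nv(b) \to -\infty$, so eventually $cb^n$ leaves the bounded set $U$. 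Dually, $K \setminus \mathcal{O}_v = \bigcup_{N \in \mathbb{N}} B_N$ with
\[
B_N := \{b \in K : \exists n \leq N,\ cb^n \notin U\},
\]
an increasing union of uniformly definable sets.

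To apply Kaplan's Lemma \ref{L:itay} one recasts the chain $\{B_N\}$ as a family of instances of a single formula $\varphi(x,y)$ with parameters from an elementary extension $M \succ K$. The $\aleph_0$-saturation of $K$ enables such an encoding: using the powers $c, c^2, c^3, \ldots$ as ``time-parameters'' and invoking saturation, one produces an auxiliary parameter $a \in M$ whose $v_M$-value exceeds every multiple of $v(c)$, serving as a limiting truncation length. Kaplan's Lemma then delivers $c^* \in \mathcal{U}$ such that $K \setminus \mathcal{O}_v = \varphi(K, c^*)$, whence $\mathcal{O}_v$ is externally definable, as required.

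The principal obstacle is precisely this reparameterisation step: Kaplan's Lemma requires instances of a single first-order formula indexed by parameters from $K$ (or an elementary extension), whereas our natural chain $\{B_N\}$ is indexed by external natural numbers. Bridging this gap via a coding argument, using the $\aleph_0$-saturation together with the multiplicative structure of $K$, is the technical heart of the proof.
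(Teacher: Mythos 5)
There are two genuine gaps here, and they are not cosmetic. First, the claimed identity $\mathcal{O}_v=\bigcap_{n}\{b\in K: cb^n\in U\}$ is false in exactly the setting of the proposition. Your argument for the inclusion $\supseteq$ is ``$v(cb^n)=v(c)+nv(b)\to-\infty$, so $cb^n$ eventually leaves $U$'', which is rank-one (archimedean) reasoning. But Lemma \ref{L:from absolute-to-valuation}, which you invoke, produces a valuation that is necessarily \emph{not} of rank one (that is the whole point of the $\aleph_0$-saturation). So there exist $b$ with $v(b)<0$ but $n|v(b)|<v(c)-\delta$ for all $n$, where $\delta$ is a radius with $U\subseteq B_\delta$; such $b$ lie in your intersection but not in $\mathcal{O}_v$. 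At best your set is squeezed between the valuation rings of two coarsenings of $v$, and since $U$ is merely an arbitrary definable set sandwiched between two balls (not itself a ball), there is no reason for the intersection to be a ring at all. This is why the paper does not try to capture $\mathcal{O}_v$ itself: it first normalizes $U$ (so that $-U=U$, $1\in U$, $UU\subseteq U$) and then verifies by hand that the externally definable set it builds is a bounded valuation ring, which may well be a proper coarsening of the one it started from.

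Second, the step you defer as ``the technical heart'' is precisely the step that is missing, and your chain cannot be repaired by a routine coding. In $B_N=\{b:\exists n\leq N,\ cb^n\notin U\}$ the index $N$ occurs as a bound on an exponent; no first-order formula $\varphi(x,y)$ recovers ``$\exists n\leq N$'' from a field parameter such as $c^N$, so Lemma \ref{L:itay} does not apply to this family. The paper's resolution is to choose a different increasing chain in which the natural number enters only through a field element: with $d\in K$ of sufficiently large positive value it sets $O_n:=\{x: xU\subseteq d^{-n}U\}$, so that the family consists of instances $\varphi(x,d^{-n})$ of the single formula $\varphi(x,y):=\forall u\,(u\in U\to xu\in yU)$ with parameters $d^{-n}\in K$, and Kaplan's lemma applies verbatim to give an externally definable $O=\bigcup_n O_n$; the choice of $d$ (with $v(d)>\max\{2\gamma,\gamma-\alpha\}$) is then what makes $O$ closed under addition and multiplication, bounded, and such that $x\in O$ or $x^{-1}\in O$ for every $x$. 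You would need both the reparametrisation and the ring verification to complete your argument.
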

\begin{remark}
$\aleph_0$-saturation is only needed to assure the existence of a non rank-one valuation inducing the topology.
\end{remark}
\begin{proof}
Let $K$ be an $\aleph_0$-saturated elementarily V-topological field.
By Lemma \ref{L:from absolute-to-valuation} $K$ admits a non rank-one valuation $v$ inducing the definable V-topology. Consequently, for any $d\neq 0$ if $v(d)>0$ then $\{d^{-n}:n<\omega\}$ is a bounded set. 

Let $U$ be a definable bounded neighbourhood of $0$, hence there exists $\gamma\geq \alpha\in vK$ satisfying $B_\gamma\subseteq U\subseteq B_\alpha$, where $B_\gamma$ and $B_\alpha$ are, respectively, the open ball around $0$ of radius $\gamma$ and of radius $\alpha$ (with respect to $v$). By replacing $U$ with $U\cup (-U)$ we may assume that $U$ is closed under additive inverses. Furthermore, by replacing $U$ with $\{x\in K: xU\subseteq U\}$ we may additionally assume that $1\in U$ and $UU\subseteq U$ (see \cite[Lemma B.5]{EnPr}). In particular $1\in B_\alpha$ and thus $\alpha<0$, so we may assume that $\gamma>0>\alpha$ and that $\gamma +\alpha>0$.

Let $d\neq 0$ be an element satisfying $v(d)>\max\{2\gamma,\gamma-\alpha\}$ (in particular $d\in U$).  We claim that $(K\setminus U)^{-1}\subseteq d^{-1}U$. Indeed, if $x\in K\setminus U$ then $v(x)\leq \gamma$ and so $v(x^{-1})\geq -\gamma$ which gives 
\[v(dx^{-1})>2\gamma-\gamma=\gamma.\] Hence $dx^{-1}\in U$ as needed.

For every $n<\omega$, set \[O_n:=\{x\in K:xU\subseteq d^{-n}U\}\] and \[O:= \bigcup_{n<\omega} O_n.\]
Since $U$ is definable, we may apply Lemma \ref{L:itay} after checking that the family $\{O_n:n<\omega\}$ is increasing. For this it is enough to show that $dU\subseteq U$. Let $x\in U$, in particular $v(x)>\alpha$, and thus \[v(dx)>2\gamma+\alpha>\gamma\] so $dx\in B_\gamma\subseteq U$, as needed. Consequently, by Lemma \ref{L:itay}, $O$ is externally definable. 

Note that, since $UU\subseteq U$ we obtain $U\subseteq O_0\subseteq O$, implying
\[(K\setminus O)^{-1}\subseteq (K\setminus U)^{-1}\subseteq d^{-1}U\subseteq d^{-1}O\subseteq O.\]
Consequently, for every $x\in K$ either $x\in O$ or $x^{-1}\in O$. Since $0\in U$ we see that $0,1\in O$ and noting that if $a,b\in O_n$ then $ab\in O_{2n}$ we also get that $OO\subseteq O$.

We show that $O$ is closed under addition.
First observe that by the choice of $d$, \[d(U+U)= dU+dU\subseteq dB_\alpha+dB_\alpha\subseteq B_\gamma+B_\gamma=B_\gamma\subseteq U\] and thus $U+U\subseteq d^{-1}U$. Now, let $w_1+w_2\in O+O$ thus there exists $n<\omega$ with $w_1,w_2\in d^{-n}U$ and so \[w_1+w_2\in d^{-n}U+d^{-n}U\subseteq d^{-n}(U+U)\subseteq d^{-n-1}U\subseteq O.\]

Note that $O$ is closed under additive inverses since it is true for each of the $O_n$. Finally, since $O$ is bounded (recall $\{d^{-n}:n<\omega\}$ is bounded) it is a bounded neighbourhood of $0$ and hence defines the same V-topology as $v$. Combining everything together $O$ is an externally definable valuation ring defining the same topology as $v$, as needed.

\end{proof}
A version of this proposition 
(albeit with a different proof) is also contained
in the arXiv preprint version of \cite{JSW}.

Building on an argument of Johnson's (\cite[Lemma 9.4.8]{johnson}), we finish this section with a lemma that will not be used in the rest of this note. It allows extending definable V-topologies to finite extensions.

\begin{lemma}\label{L:finite-extension-def-V}
	Let $K$ be an elementarily $V$-topological field, $L\ge K$ a finite extension. Then $L$ with its induced structure from $K$ is elementarily $V$-topological, and the topology on $L$ can be chosen to induce the topology on $K$. 
\end{lemma}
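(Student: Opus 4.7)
By Lemma~\ref{L:def-top}(2), fix a definable bounded $0$-neighborhood $U = \varphi(K, \bar{a})$ of $\tau$, and choose a $K$-basis $e_1 = 1, e_2, \ldots, e_n$ of $L$; the basis identifies $L$ with $K^n$ and makes $L$ interpretable in $K$. Set
\[
V := \Big\{ \sum_{i=1}^n x_i e_i \in L : x_i \in U \text{ for all } i \Big\},
\]
a $K$-definable subset of $L$ in its induced structure. I plan to show that $\{cV : c \in L^\times\}$ is a uniformly definable $0$-neighborhood base of a $V$-topology $\tau_L$ on $L$ with $\tau_L|_K = \tau$. This assertion is expressible by a first-order sentence in the language of $K$ (with $L$ interpreted via the chosen basis), so by elementary equivalence it suffices to verify it in an $\aleph_0$-saturated elementary extension $K^* \succeq K$, taking $L^*$ to be the corresponding finite extension of $K^*$ of degree $n$ (irreducibility of the minimal polynomial of a primitive element of $L/K$ being first-order).

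In $K^*$, Lemma~\ref{L:from absolute-to-valuation} provides a non-trivial valuation $v$ inducing $\tau^*$. Extend $v$ to a valuation $w$ on $L^*$ via Chevalley's theorem; then the $w$-topology $\tau_w$ on $L^*$ is a $V$-topology restricting to $\tau^*$, and the whole argument reduces to checking that $V^* \subseteq L^*$ is a bounded $w$-neighborhood of $0$. Boundedness is immediate from the ultra-metric inequality together with $U^* \subseteq B_v(0, \beta)$ for some $\beta$. Once one also knows $V^*$ contains a $w$-ball around $0$, the argument in the proof of Lemma~\ref{L:def-top} shows $\{cV^* : c \in L^{*\times}\}$ is a $0$-neighborhood base for $\tau_w$, and elementary equivalence transfers the conclusion back to $L$.

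\textbf{Main obstacle.} The delicate point is showing $V^* \supseteq B_w(0, \gamma)$ for some $\gamma$, which amounts to the $w$-topology on $L^*$ being at least as fine as the product topology on $L^* \cong (K^*)^n$. When $v$ admits a unique extension to $L^*$ (e.g.\ whenever $(K^*, v)$ is henselian), this comparison is automatic and the coordinate projections $y \mapsto y_i$ are $(w,v)$-continuous at $0$. In general, however, $v$ may admit several extensions $w_1, \ldots, w_g$ to $L^*$, and since $V^*$ is $K^*$-definable it is Galois-invariant, so the topology generated by its translates is the join $\bigvee_i \tau_{w_i}$, which strictly refines each $\tau_{w_i}$ and is typically not a $V$-topology. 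Following \cite[Lemma~9.4.8]{johnson}, one circumvents this either by exploiting the non-rank-one freedom in the choice of $v$ to pass (in a further saturated model) to a henselian coarsening that renders the extension $w$ unique, or by replacing $V$ with a $K$-definable set built from the norm or the characteristic polynomial of $y \in L$, which is adapted to a single choice of extension; once this is in place, the preceding transfer argument goes through verbatim.
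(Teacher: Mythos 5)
You have correctly set up the transfer argument (pass to a saturated model, replace the absolute value or rank-one valuation by a valuation $v$ via Lemma \ref{L:from absolute-to-valuation}, fix an extension $w$ of $v$ to $L$, and reduce to producing a single definable bounded $w$-neighbourhood of $0$ by Lemma \ref{L:def-top}), and you have correctly diagnosed the central obstacle: the coordinate-wise set $V$ is $\aut(L/K)$-invariant, so when $v$ has several extensions $w_1,\dots,w_g$ to $L$ the translates of $V$ generate the join $\bigvee_i \tau_{w_i}$, which is not a V-topology. The gap is that neither of your two proposed escapes actually works. (a) A valuation on a saturated NIP-free field need not have any non-trivial henselian coarsening, and more to the point a coarsening with a \emph{unique} extension to $L$ need not exist at all: if already the coarsest non-trivial coarsening of $v$ splits in $L$, every coarsening does. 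Saturation only buys you that $v$ is not of rank one; it says nothing about henselianity or about uniqueness of extensions. (b) Any set defined from the norm $N_{L/K}$ or the characteristic polynomial using only parameters from $K$ is again $\aut(L/K)$-invariant, so it suffers from exactly the same defect; indeed $v(N_{L/K}(y))$ is essentially $\sum_i e_if_i\, w_i(y)$, so $\{y: v(N(y))>\gamma\}$ is not even $w$-bounded when $g>1$.

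The resolution in the paper (following Johnson, \cite[Lemma 9.4.8]{johnson}) necessarily uses parameters from $L$ that are \emph{not} Galois-invariant: after reducing to $L/K$ normal, one picks $a_1,\dots,a_k\in\mathcal O_w$ with $w(a_i)=0$ such that for every $\sigma\in\aut(L/K)$ some $a_i$ lies in $\mathcal O_w\setminus\sigma(\mathcal O_w)$, i.e.\ the tuple $\bar a$ separates $\mathcal O_w$ from each of its conjugates. Johnson's criterion then gives a degree bound $d$ such that $x\in\mathcal O_w$ iff no polynomial $P$ of degree $\le d$ with coefficients in $\mathcal M_v$ satisfies $P(x,\bar a)=1$; replacing $\mathcal M_v$ by a definable bounded $U\supseteq\mathcal M_v$ yields a definable set $U_1\subseteq\mathcal O_w$ (hence bounded) which a short valuation-theoretic estimate shows still contains a $w$-ball around $0$. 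It is precisely the use of the non-invariant tuple $\bar a$ that singles out the one chosen extension $w$ and breaks the symmetry your $V$ cannot escape; without this (or an equivalent device) the proof does not go through.
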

\begin{proof}
	We need to show that there is, in $L$ (with the induced structure from $K$), a $V$-topology $\tau$ extending the topology on $K$ and a definable bounded $0$-neighbourhood $U\in \tau$. 	Since $L$ is interpretable in $K$, for every elementary extension, $\CK\succ K$ there is $\CL\succ L$ such that $(\CL, \CK)\equiv (L,K)$. By Lemma \ref{L:def-top}, $\CK$ is elementarily $V$-topological and if we show that $\CL$ is $V$-topological then so is $L$. Of course, if the topology on $\CL$ extends the topology on $\CK$ then, as $(\CL, \CK)\equiv (L,K)$ and all topologies concerned are definable, the same remains true in $(L,K)$. So we may assume that $K$ is sufficiently saturated and that the $V$-topology on $K$ is induced by a valuation $v$, and fix some extension $v'$ of $v$ to $L$.

	Next, we note that we may assume that $L$ is a normal extension of fields. Indeed, let $\tilde L\ge L\ge K$ be the normal closure of $L$. Because $\tilde L$ is a finite extension of $L$ it is interpretable in $L$, and if we find a topology in $\tilde L$ satisfying the requirement,
	(which is equivalent to what we want by Lemma \ref{L:def-top}), 
	so will its restriction to $L$. 
	
	Denote by $\mathcal{M}\subseteq\CO\subseteq K$ (respectively $\mathcal{M}'\subseteq\mathcal{O}'\subseteq L$) the maximal ideal and valuation ring of $v$ (respectively of $v'$). Let $a_1\dots, a_k\in \mathcal{O}'$ be such that for all $\sigma\in \aut(L/K)$ there exists some $a_i\in \CO'\setminus \sigma(\CO')$. We may assume that $v'(a_i)=0$. Indeed, replacing $a_i$ with $a_i^n$ for a suitable $n$ we may assume that $v'(a_i)\in vK$ (because $v'L/vK$ is torsion). So let $c_i\in K$ be such that $v'(c_i)=-v'(a_i)$ and it is immediate that $\{c_ia_i\}_{i=1}^k$ satisfies the requirements. Denote $\bar a:=(a_1,\dots, a_k)$.
	
	Fix some $K$-definable bounded $U\supseteq \mathcal{M}$, (e.g., 
$U=cW$ for any definable bounded $0$-neighbourhood $W$ and $c$ of suitable valuation). By \cite[Claim 9.4.9 and proof of Lemma 9.4.8]{johnson}  there exists a natural number $d$ such that $x\in \CO'$ if and only if there is no polynomial $P$ of degree at most $d$ with parameters in $\mathcal{M}$ such that $P(x,\bar a)=1$. 
	So the set $U_1$ of all $x$ not satisfying any equation $P(x,\bar a)=1$ for $P$ a polynomial of degree at most $d$ with parameters in $U$ is contained in $\CO'$, i.e. $U_1$ is a definable bounded set in $L$. 
	
	It remains to show that $U_1$ is a $0$-neighbourhood. Because $U$ is bounded there exists some $\gamma$, necessarily non-positive, such that $U\subseteq B_\gamma(K)$ (the open ball of radius $\gamma$ around $0$ in $K$). The set of $x\in L$ such that $P(x,\bar a)=1$ for some non-constant $P$ over $B_{-(d+1)\gamma}(K)$ of degree at most $d$ does not contain any element of $B_{\gamma}(L)$.  To see this, observe that if $x\in B_{\gamma}(L)$ then for some $1\leq n\leq d$, \[0=v'(1)=v'(P(x,\bar a))\geq -(d+1)\gamma+v'(x^n)> -(d+1)\gamma +n\gamma\geq 0,\] which is impossible. So the set $U_1$ contains the open ball $ B_{\gamma}(L)$.
\end{proof}

\section{Strong Approximation on Curves}
The arguments here generalise ideas used by Johnson in his thesis (\cite[Section 11.3]{johnson}) as part of the proof that certain models of valued fields are existentially closed. In fact, as we show below, his arguments give a specific case of a strong approximation result for curves. Though this can, probably, be extended further,  there is no hope of finding strong approximation result for all varieties over a V-topological field $(K, \tau)$. For example, if a curve has only finitely many $K$-points then -- because $V$-topologies are Hausdorff -- strong approximation must fail.

\begin{fact}[The Strong Approximation Theorem for V-topological Fields]\label{F:strong-approx-ift}\cite[Corollary 4.2]{PrZ1978} 
Let $K$ be a field with two independent (i.e. defining different topologies) V-topologies $\tau_1$ and $\tau_2$. Then for every non-empty $U_1\in \tau_1$ and $U_2\in \tau_2$ there exists $x\in K$ with $x\in U_1\cap U_2$.
\end{fact}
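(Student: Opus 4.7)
The plan is to decompose the argument into three steps: a reduction to separating two points, production of a ``separation element,'' and a partition-of-unity computation.

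First I would reduce to the following: given $x_1,x_2 \in K$ and $\tau_i$-open $0$-neighbourhoods $V_i$ for $i=1,2$, find $x \in K$ with $x - x_i \in V_i$ for both $i$. This is immediate from the hypothesis, as each $\tau_i$ is a field topology: pick any $x_i \in U_i$ and shrink to a translated basic neighbourhood $x_i + V_i \subseteq U_i$.

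The central obstacle is to produce an element $b \in K^\times$ which is simultaneously ``$\tau_1$-infinitesimal'' and ``$\tau_2$-infinite,'' in the precise sense that $b^n \to 0$ in $\tau_1$ and $b^{-n} \to 0$ in $\tau_2$ as $n\to\infty$ (after possibly swapping $\tau_1,\tau_2$). This is the only place the hypothesis $\tau_1 \neq \tau_2$ is really used, and I expect it to be the main difficulty. Concretely, since the topologies differ, one can find a $\tau_1$-open $0$-neighbourhood $W$ that fails to be a $\tau_2$-neighbourhood of $0$; hence one locates elements $c \in K^\times$ that are $\tau_2$-small yet lie outside $W$. The V-axiom for $\tau_1$ (for every $W \in \tau_1$ there is $U \in \tau_1$ such that $xy \in U$ implies $x \in W$ or $y \in W$) then forces $c^{-1}$ to be genuinely $\tau_1$-small, since $c \cdot c^{-1} = 1$ cannot lie in arbitrarily small $\tau_1$-neighbourhoods of $0$. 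Iterating this balancing argument---playing the V-axioms for $\tau_1$ and $\tau_2$ off one another and refining at each stage---delivers a single $b$ as required.

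Once $b$ is in hand, the rest is the standard partition-of-unity trick. Set $a_n := (1 + b^n)^{-1}$. In $\tau_1$, $b^n \to 0$ gives $a_n \to 1$; in $\tau_2$, rewriting $a_n = b^{-n}(1 + b^{-n})^{-1}$ together with $b^{-n} \to 0$ gives $a_n \to 0$. Hence $x_n := a_n x_1 + (1-a_n) x_2$ converges to $x_1$ in $\tau_1$ and to $x_2$ in $\tau_2$, so for $n$ sufficiently large $x_n \in (x_1 + V_1) \cap (x_2 + V_2) \subseteq U_1 \cap U_2$. The same template applies uniformly whether each $\tau_i$ is induced by a valuation or by an archimedean absolute value, so the argument concludes in all cases.
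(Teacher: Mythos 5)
The paper offers no proof of this statement---it is quoted as Corollary 4.2 of Prestel--Ziegler---so I am judging your argument on its own terms. Your overall architecture is the standard one for weak/strong approximation: reduce to simultaneous approximation of two points, produce an element $b$ with $b^n\to 0$ in $\tau_1$ and $b^{-n}\to 0$ in $\tau_2$, and interpolate with $a_n=(1+b^n)^{-1}$. Steps 1 and 3 are fine (modulo noting $1+b^n\neq 0$ for large $n$, which Hausdorffness gives you). The problem is that Step 2, which you correctly identify as the only place the hypothesis $\tau_1\neq\tau_2$ enters, is where essentially all of the content of the theorem lives, and your sketch of it does not work as written.

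Two concrete issues. First, the V-axiom does not ``force $c^{-1}$ to be genuinely $\tau_1$-small.'' From $c\notin W$ the axiom (in contrapositive form: $x,y\notin W$ implies $xy\notin U$) yields only that $\{c^{-1}: c\notin W\}$ is a \emph{bounded} set, not that its elements lie in small neighbourhoods of $0$; your stated justification via $c\cdot c^{-1}=1$ proves nothing, since $1$ need not lie in the witnessing set $U$ once $W$ is small. A counterexample to the claimed deduction: in $\mathbb{Q}_p$ with $W=p\mathbb{Z}_p$, any unit $c$ has both $c\notin W$ and $c^{-1}\notin W$. Second, even granting that for each pair of neighbourhoods one can find some $c$ that is $\tau_2$-small with $c^{-1}$ $\tau_1$-small, this does not produce a single $b$ with $b^n\to 0$: when $\tau_1$ is induced by a valuation of rank greater than one, $v_1(b)>\gamma$ for a fixed $\gamma$ does not imply that $nv_1(b)$ is cofinal in the value group, so $b$ need not be topologically nilpotent. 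The phrase ``iterating this balancing argument'' is covering precisely the hard part; the known proofs get the element $b$ by first invoking the D\"urbaum--Kowalsky representation of each $\tau_i$ by a valuation or absolute value and then exploiting independence at the level of value groups (or, in the archimedean case, the Artin--Whaples computation). Without that, the proof is incomplete.
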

%

\begin{proposition}\label{P:approx-for-curves}
Let $K$ be a perfect field and $\tau_1,\tau_2$ two distinct V-topologies on $K$ with $\tau_1$ t-henselian. Let $C$ be a geometrically integral separated curve over $K$, $X\in \tau_1$ and 
$Y\in \tau_2$ 
 subsets of $C(K)$, both containing at least one non-singular point of $C(K)$. If $Y$ is infinite then $X\cap Y$ is nonempty.
\end{proposition}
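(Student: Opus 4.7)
My plan would be to first reduce to the case where $C$ is a geometrically integral nonsingular projective curve, by passing to its smooth projective model $\tilde C$. Since the smooth locus $C^{sm}$ is Zariski-open (hence $\tau_i$-open), and both $X$ and $Y$ contain non-singular $K$-rational points, the restrictions $X\cap C^{sm}(K)$ and $Y\cap C^{sm}(K)$ are non-empty $\tau_i$-open subsets; moreover $Y\cap C^{sm}(K)$ remains infinite because $C^{sing}(K)$ is finite. Via the open immersion $C^{sm}\hookrightarrow\tilde C$, we may replace $C$ by $\tilde C$.

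Next, using that $Y$ is infinite, I would pick $g+1$ distinct smooth $K$-rational points of $Y$ (where $g$ is the genus of $C$) and apply Lemma \ref{L:divisor} to obtain a non-constant $f\in K(C)$ whose zero divisor is a sum of distinct simple $K$-rational points, all lying in $Y$. Choosing any such zero $q\in Y$, we have $v_q(f)=1$, so by Proposition \ref{P:v-top-implicit-etale} -- invoking the t-henselianity of $\tau_1$ -- the function $f$ restricts to a $\tau_1$-homeomorphism $f|_{V_1}\colon V_1\to W_1$ between a $\tau_1$-open neighborhood $V_1$ of $q$ in $C(K)$ and a $\tau_1$-open neighborhood $W_1$ of $0$ in $K$. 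This gives a $\tau_1$-local chart around a specific $K$-point of $Y$.

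The final step -- and the main obstacle -- is to combine this $\tau_1$-chart at $q\in Y$ with the strong approximation theorem in $K$ (Fact \ref{F:strong-approx-ift}) to locate a point in $X\cap Y$. Note that since $X$ is non-empty $\tau_1$-open with $\tau_1$ t-henselian, Lemma \ref{L:U-infinite} guarantees $X$ is infinite, so a symmetric application of Lemma \ref{L:divisor} yields a second rational function $g$ with simple zeros in $X$, and hence a $\tau_1$-homeomorphism $g|_{V_1^p}\colon V_1^p\to W_1^p$ near a smooth $p\in X$ with $V_1^p\subseteq X$. The task is then to find $z\in W_1^p$ whose image under $g|_{V_1^p}^{-1}$ (which lands in $X$) also lies in $Y$.

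The technical difficulty is that $\tau_2$ is not assumed t-henselian, so the natural attempt to push $Y$ through $f$ or $g$ into a $\tau_2$-open subset of $K$ and then apply strong approximation does not work directly: rational functions with simple zeros at smooth $K$-points are $\tau_1$-local homeomorphisms by Proposition \ref{P:v-top-implicit-etale}, but their $\tau_2$-openness would require the inverse function theorem for $\tau_2$ which we lack. The resolution likely uses the $\tau_2$-continuity of the relevant rational functions, the infinitude of $Y$ in a $\tau_2$-neighborhood of $q$, and a careful application of strong approximation on $K$ to the $\tau_1$-open $g(V_1^p)\subseteq W_1^p$ and an appropriate $\tau_2$-open subset of $K$ arising from $Y$ via the composite $f\circ (g|_{V_1^p})^{-1}$ or a similar rational map. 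The identification of this $\tau_2$-open set -- which is where the independence of $\tau_1$ and $\tau_2$ must enter crucially -- is the technical heart of the proof.
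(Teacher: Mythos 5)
Your reduction to the nonsingular projective model and your use of Lemma \ref{L:divisor} together with Proposition \ref{P:v-top-implicit-etale} match the opening of the paper's argument, but the proposal stops exactly at the point where the actual work happens, and you say so yourself. Two ideas are missing. First, the paper applies Lemma \ref{L:divisor} to $X$ (which is infinite by Lemma \ref{L:U-infinite}, since $\tau_1$ is t-henselian) so that the \emph{entire} zero divisor of the resulting $f$ consists of simple points $P_1,\dots,P_m$ of $X$, where $m=\deg f$. Taking disjoint $\tau_1$-neighbourhoods $W_j\subseteq X$ of the $P_j$ on which $f$ is a $\tau_1$-homeomorphism and setting $U=\bigcap_j f(W_j)$, a degree count shows that for every $y\in U$ \emph{all} $m$ preimages of $y$ lie in $X$, and indeed all preimages even over $K^{alg}$ are accounted for. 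This global control of the whole fibre is what your single local chart at one zero $q$ cannot deliver, and it is essential for the last step.

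Second, the obstacle you flag (no inverse function theorem for $\tau_2$) is resolved by passing to the V-closure $(\widehat K,\widehat\tau_2)$, which \emph{is} t-henselian, rather than by any direct argument on $(K,\tau_2)$. One interprets $Y$ as a set $Y(\widehat K)$ there (Lemma \ref{L:ift-"denseness"}); it is infinite, hence Zariski dense, hence contains an \'etale point of $f$, so by Proposition \ref{P:etale-in-t-hen} $f$ maps some $\widehat\tau_2$-open $\widehat W\subseteq Y(\widehat K)$ homeomorphically onto a $\widehat\tau_2$-open $\widehat V\subseteq\widehat K$. Density of $K$ in $\widehat K$ and Lemma \ref{L:ift-"denseness"} give a nonempty $\tau_2$-open $V(K)=\widehat V\cap K$. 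Strong approximation (Fact \ref{F:strong-approx-ift}) then yields $y\in U\cap V(K)$; the point of $\widehat W$ above $y$ must be one of the $m$ preimages, all of which are $K$-points of $X$, so it lies in $X\cap Y$. Without the V-closure device and the ``all preimages in $X$'' fibre argument, the proof does not close, so as it stands the proposal has a genuine gap.
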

\begin{proof}
Let $\tilde C$ be the non-singular projective curve who is birational isomorphic to $C$ and let $\tilde X$ and $\tilde Y$ be the images of $X$ and $Y$ under this birational isomorphism. By assumption, $\tilde X$ and $\tilde Y$ are non-empty. Furthermore, $\tilde X$ is infinite by Lemma \ref{L:U-infinite} and $\tilde Y$ is infinite by assumption. If we show that $\tilde X\cap \tilde Y$ is nonempty, by removing this point and repeating the argument, it will have to be infinite and thus $X\cap Y$ is non-empty. We may thus assume that $C$ is non-singular projective. 

Applying Lemma \ref{L:divisor}, 
we find a non-constant rational function $f\in K(C)$, $f:C\to K$, whose divisor of zeros has no multiplicities and consists of points $P_1,\dots,P_m$ in $X$. By, e.g., \cite[page 249]{Lorenzini}, $f$ is a map of degree at most $m$.

\begin{claim}
There is a $\tau_1$-open $0\in U\subseteq K$ such that for every $y\in U$, $f^{-1}(y)$ consists of $m$ distinct points in $X$.
\end{claim}
\begin{claimproof}
Since each $P_j$ has multiplicity one, by Proposition \ref{P:v-top-implicit-etale} for each $j$ there is a $\tau_1$-open neighbourhood $W_j\subseteq C(K)$ of $P_j$ such that $f$ induces a $\tau_1$-homeomorphism between $W_j$ and an open neighbourhood of zero. We may assume that $W_j\subseteq X$ and that $W_j\cap W_{j'}=\emptyset$ for $j\neq j'$. Hence $U=\bigcap_{j=1}^m f(W_j)$ is an open neighbourhood of $0$ in $K$, and if $y\in U$ then $f^{-1}(y)$ has at least one point in each $W_j$ but because the $W_j$ are distinct and $f$ has degree $m$, we are done.
\end{claimproof}
Now consider $\tau_2$. By \cite[Paragraph after Theorem 7.9]{PrZ1978}, 
$(K,\tau_2)$ is dense in $K^{sep}$ with respect to any extension of $\tau_2$ to $K^{sep}$, the separable closure of $K$. In particular, it is dense in its V-closure $(\widehat K, \widehat \tau_2)$.
Let $C(\widehat K)$ be the set of $\widehat K$-rational points of $C$. 
As in the proof of Lemma \ref{L:ift-"denseness"} and since $Y$ is a union of balls over $K$, we may consider $Y(\widehat K)$, the set $Y$ defines in $(\widehat K,\widehat \tau_2)$.


Since $(\widehat K,\widehat \tau_2)$ is t-henselian, by Lemma \ref{L:U-infinite} $Y(\widehat K)$ is infinite and hence Zariski dense. By \cite[Proposition 4.6]{etale}, there exists a Zariski open neighbourhood of, say, $P_1$ in which $f$ is \'etale and hence $Y(\widehat K)$ has an \'etale point of $f$.
By Proposition \ref{P:etale-in-t-hen}, there exists a $\widehat \tau_2$-open subset $\widehat W$ of $Y(\widehat K)$ on which $f$ is a homeomorphism. In particular $\widehat V:=f(\widehat W)$ is a $\widehat \tau_2$-open subset of $\widehat K$. Since $K$ is $\tau_2$-dense in $\widehat K$, $\widehat V\cap K$ is non empty. Consequently, by Lemma \ref{L:ift-"denseness"}, $\widehat V\cap K$ defines a $\tau_2$-open subset $V(K)$ of $K$.

By the strong approximation theorem (Fact \ref{F:strong-approx-ift}), there is $y\in V(K)\cap U$.  Since $y\in U$, by the claim, all $m$ pre-images $f^{-1}(y)$ are elements of $X\subseteq C(K)$. Because $y\in V(K)$, there is some $x\in \widehat W$ mapping to $y$, so we must have $x\in X\subseteq C(K)$. Thus $x\in X\cap Y$.

\end{proof}


%

\section{Independent Valuations on Valued Fields}

Our aim in this section is to generalize results by Johnson \cite[Chapter 11]{johnson} and Montenegro \cite{montenegro}, regarding the dependence of valuations in certain classes of NIP henselian fields. Both proofs use techniques from \cite{duret}. We generalize their results to t-henselian NIP fields.  The strong approximation result proved in the previous section will play an important role in the argument. 

As in \cite{montenegro}, \cite{johnson} and \cite{duret}, a crucial step is the observation that $q$-th roots can be separated by open sets for any $q>1$. For the rest of this section, let $K^q$ denote the set of $q$-th powers of $K$.

\begin{lemma}\label{L:correspondence-between-roots}
Let $K$ be a field containing a primitive $q$-th root of unity $\xi_q$ and let $b\in K^q$, where $q$ is a prime number.
Let $v$ be a valuation on $K$ satisfying
\begin{enumerate}
\item $q\neq \chr[Kv]$, or
\item $q=2$ and $\chr[K]\neq 2$.
\end{enumerate}
If $v(b- 1)>\gamma$ for some non-negative $\gamma\in vK$, then there is a one-to-one correspondence \[j:\{a\in K: a^q=b\}\to \{0,\dots, q-1\}\] such that $v(a-\xi_q^{j(a)})>\gamma/q$.
%
\end{lemma}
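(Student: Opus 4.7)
The plan is to exploit the factorization
\[
x^q - 1 = \prod_{k=0}^{q-1}(x - \xi_q^k) \in K[x]
\]
together with the simply transitive action of the group $\langle \xi_q \rangle$ on the set $R := \{a \in K : a^q = b\}$ of $q$-th roots of $b$.

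First I would observe that $v(b-1) > \gamma \geq 0$ forces $v(b) = v(1) = 0$, so $b$ is a unit in $\mathcal{O}_v$. Combined with $b \in K^q$ and $\xi_q \in K$, this yields $|R| = q$: fixing any $a_0 \in R$, we have $R = \{\xi_q^k a_0 : 0 \leq k < q\}$. Substituting $x = a_0$ in the factorization above gives $b - 1 = \prod_k (a_0 - \xi_q^k)$, hence, applying $v$,
\[
\sum_{k=0}^{q-1} v(a_0 - \xi_q^k) = v(b-1) > \gamma,
\]
and by pigeonhole some $s_0 \in \{0, \ldots, q-1\}$ satisfies $v(a_0 - \xi_q^{s_0}) > \gamma/q$ (interpreting the inequality in the divisible hull of $vK$, i.e.\ as $q \cdot v(a_0 - \xi_q^{s_0}) > \gamma$).

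Next, I would define $j(\xi_q^k a_0) := k + s_0 \pmod q$, which is manifestly a bijection $R \to \{0, \ldots, q-1\}$. The required closeness is then verified by
\[
v(\xi_q^k a_0 - \xi_q^{k+s_0}) = v\bigl(\xi_q^k(a_0 - \xi_q^{s_0})\bigr) = v(\xi_q^k) + v(a_0 - \xi_q^{s_0}) = v(a_0 - \xi_q^{s_0}) > \gamma/q,
\]
using that $v(\xi_q^k) = 0$, which follows from $\xi_q^q = 1$ and the torsion-freeness of the ordered abelian group $vK$.

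The characteristic hypotheses (1) and (2) are not strictly needed for this existence argument, which only uses that $\xi_q$ is a primitive $q$-th root in $K$. I expect they are included to secure additionally the \emph{uniqueness} of $j$: under (1), $q \neq \chr[Kv]$ makes $x^q - 1$ separable modulo $\mathcal{M}_v$, forcing $v(\xi_q^i - \xi_q^j) = 0$ for $i \neq j$, so by the ultrametric inequality no $a \in R$ can be $\gamma/q$-close to two distinct powers of $\xi_q$, pinning $j(a)$ down canonically. Case (2) is the exceptional $q = 2$ situation where $v(\xi_2 - 1) = v(2)$ may be positive, but the analogous uniqueness recovers once $\gamma$ dominates $v(2)$ appropriately; the construction of $j$ above, however, goes through uniformly in both cases.
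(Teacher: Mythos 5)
Your proof is correct and takes essentially the same route as the paper's: both rest on the factorization $b-1=\prod_{k=0}^{q-1}(a_0-\xi_q^k)$, the pigeonhole estimate $v(a_0-\xi_q^{s_0})>\gamma/q$, and the fact that $v(\xi_q^k)=0$ transports this estimate to every root $\xi_q^k a_0$. The only difference is bookkeeping: you assemble the bijection by translation from a fixed base root, whereas the paper assigns to each root its nearby power of $\xi_q$ (invoking the characteristic hypotheses for well-definedness) and then checks surjectivity; your remark that those hypotheses are needed only for uniqueness of that assignment, not for the existence of some correspondence, is accurate.
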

\begin{proof}
  Let $a\in K$ with $a^q=b$. Since $v(a^q-1)>\gamma$, 
  \[
  \sum_{j=0}^{q-1} v(a-\xi_q^j)>\gamma.
  \] 
  As a result, there exists some $j$ with $v(a-\xi_q^j)>\gamma/q$. If $v(a-\xi_q^{j})>\gamma/q$ and $v(a-\xi_q^{l})>\gamma/q$, then $\xi_q^{j},\xi_q^{l}\models v(x-a)>\gamma/q$ and consequently, $v(\xi_q^j-\xi_q^{l})>\gamma/q> 0$. Since $q\neq \mathrm{char}(Kv_i)$, the residue of $1$ has $q$ distinct $q$-th roots of unity, so $j=l$.

The map is onto since if $v(a-\xi_q^j)>\gamma/q$ then $v(a\xi_q^{\widehat j}-\xi_q^{j+\widehat j})>\gamma/q$ for any $0\leq \widehat j\leq q-1$.

%
%
%
\end{proof}

We also need the following result due to Johnson: \begin{fact}\cite[Lemma 9.4.8]{johnson}\label{F:def-of-val-finite-extension}
Let $K$ be a field with some structure, and $L/K$ a finite extension. Suppose that $\mathcal{O}$ is a definable valuation ring on $K$. Then each extension of $\mathcal{O}$ to $L$ is definable (over the same parameters used to define $\mathcal{O}$ and interpret $L$) in $L$ with its $K$-induced structure. 
\end{fact}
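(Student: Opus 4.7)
The plan is to reduce to a Galois setting in which the various extensions of $\mathcal{O}$ are conjugate, and then construct an explicit definition using parameters from $L$ that distinguish the chosen extension from its conjugates.

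First, I would reduce to the case where $L/K$ is Galois. The normal closure $\tilde L$ of $L/K$ is a finite extension of $K$ and hence interpretable in $K$ from the same parameters that interpret $L$. Any extension $\mathcal{O}'$ of $\mathcal{O}$ to $L$ lifts to some extension $\tilde{\mathcal{O}}'$ of $\mathcal{O}$ to $\tilde L$, and $\tilde{\mathcal{O}}' \cap L = \mathcal{O}'$; so if each extension to $\tilde L$ is definable in $\tilde L$ with its $K$-induced structure, intersecting with $L$ yields a definition of $\mathcal{O}'$. We may thus assume $L/K$ is Galois with finite Galois group $G$, which acts transitively on the finite set $\{\mathcal{O}_1 = \mathcal{O}', \mathcal{O}_2, \dots, \mathcal{O}_r\}$ of extensions of $\mathcal{O}$.

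Next, to distinguish $\mathcal{O}'$ from its conjugates, I observe that distinct extensions of the same valuation on $K$ are pairwise incomparable, so for each $i \geq 2$ there is some $a_i \in \mathcal{O}' \setminus \mathcal{O}_i$; by rescaling by an element of $K^\times$ we may arrange $v(a_i) = 0$, so that $a_i$ is a $w_1$-unit. Let $\bar a = (a_2, \dots, a_r) \in L^{r-1}$. The heart of the argument is the claim that there is a degree bound $d$ (depending on $L/K$ and $\bar a$ but not on the tested element $x$) such that
\[
x \in \mathcal{O}' \iff \text{no } P \in \mathcal{M}[X, Y_2, \dots, Y_r] \text{ of degree} \leq d \text{ satisfies } P(x, \bar a) = 1.
\]
Once this claim is established, the right-hand side is a first-order formula in $L$ with the given parameters (the parameters for $\mathcal{O}$ together with $\bar a$), and it defines $\mathcal{O}'$. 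The easy direction is clear: if $x \in \mathcal{O}'$ and each coefficient of $P$ lies in $\mathcal{M} \subseteq \mathcal{M}'$, then every monomial of $P(x, \bar a)$ lies in $\mathcal{M}'$, so $P(x, \bar a) \in \mathcal{M}'$ and cannot equal $1$.

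The hard direction, which will be the main obstacle, is the converse: given $x \notin \mathcal{O}'$, produce such a polynomial of bounded degree. The strategy is to use the approximation theorem for the finitely many pairwise incomparable valuations $w_1, \dots, w_r$ to find a polynomial expression in $x$ and the $a_i$'s whose $w_i$-values are balanced enough that a suitable Galois-invariant combination (a trace or norm combination) descends to $K^\times$ and can be normalized to $1$ by multiplying through by an element of $\mathcal{M}$. The subtle point, and the technical core of Johnson's argument, is that both the arithmetic combinatorics of the exponents and the Galois-theoretic control over the resulting conjugate expressions admit a uniform bound on the degree $d$ that is independent of $x$.
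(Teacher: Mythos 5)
The Fact is quoted from Johnson's thesis and the paper gives no proof of its own, but the key mechanism is reproduced in the proof of Lemma \ref{L:finite-extension-def-V}: after a Galois reduction one picks units $\bar a$ separating $\mathcal{O}'$ from its conjugates and shows that $x\in\mathcal{O}'$ iff no polynomial of degree at most $d$ with coefficients in $\mathcal{M}$ sends $(x,\bar a)$ to $1$. Your skeleton is exactly this one, and your ``easy direction'' is correct. Two small imprecisions: arranging $w_1(a_i)=0$ requires first replacing $a_i$ by a power $a_i^n$ so that its value lies in $vK$ (the value group extension is torsion) before multiplying by an element of $K^\times$, and one should check the rescaled element is still outside $\mathcal{O}_i$; this is done explicitly in Lemma \ref{L:finite-extension-def-V}.

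The genuine gap is the converse implication, which you explicitly leave unproved, and the strategy you sketch for it is not the one that works. The existence of \emph{some} $P$ with coefficients in $\mathcal{M}$ and $P(x,\bar a)=1$ when $x\notin\mathcal{O}'$ is not obtained by approximation-theorem balancing of values or by norm/trace descent; it is a consequence of Chevalley's extension theorem. Concretely, the set of values $P(x,\bar a)$ with $P$ over $\mathcal{M}$ is the ideal $\mathcal{M}\cdot\mathcal{O}[x,\bar a]$ of the ring $R=\mathcal{O}[x,\bar a]$; if it does not contain $1$, it lies in a maximal ideal of $R$, and a valuation ring of $L$ dominating the corresponding localization extends $\mathcal{O}$ (its trace on $K$ contains $\mathcal{O}$ and its maximal ideal contains $\mathcal{M}$) and contains every $a_i$, hence by the choice of $\bar a$ it must equal $\mathcal{O}'$, forcing $x\in\mathcal{O}'$. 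This produces $P$ with no degree control whatsoever; the uniform bound $d$ independent of $x$ is then a separate step (Johnson's Claim 9.4.9), obtained by a compactness argument from the fact that the sets ``some $P$ of degree $\le d$ over $\mathcal{M}$ satisfies $P(x,\bar a)=1$'' form an increasing definable family whose union is $L\setminus\mathcal{O}'$. It cannot come from ``arithmetic combinatorics of the exponents,'' since the values $w_1(x)$ ranging over $L\setminus\mathcal{O}'$ are unbounded. Without both the Chevalley step and the compactness step, the right-hand side of your displayed equivalence is not a first-order condition and the proof does not close.
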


We can now prove the main result of this section.

\begin{theorem}\label{T:indep-val=ip}
Let $K$ be a t-henselian NIP field, in some language expanding the language of rings. Then $K$ admits at most one definable $V$-topology.
\end{theorem}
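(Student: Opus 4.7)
The plan is to argue by contradiction. Suppose $K$ admits two distinct definable V-topologies $\tau_1 \neq \tau_2$, and aim to produce a formula with IP. Two tools will drive the argument: on the one hand, Lemma \ref{L:correspondence-between-roots}, which identifies $q$-th powers through a $\sigma$-neighborhood of $1$ (for $\sigma$ a chosen t-henselian topology), making each coset of $(K^\times)^q$ a $\sigma$-open set; on the other, Proposition \ref{P:approx-for-curves}, which lets us place $K$-rational points on a curve simultaneously inside a $\sigma$-open and a $\tau_2$-open.

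I would first put the structure in a convenient form. By Lemma \ref{L:def-top} I may pass to a sufficiently saturated elementary extension without changing the definable V-topologies. By Lemma \ref{L:from absolute-to-valuation} and Proposition \ref{P:defV=extdef}, both $\tau_i$ are induced by externally definable valuations $v_i$; working in the NIP Shelah expansion (Section \ref{ss:NIP}), I treat $v_1,v_2$ as definable. Since $K$ is t-henselian, there is a t-henselian V-topology $\sigma$ on $K$, and as $\tau_1 \neq \tau_2$, I may rename so that $\tau_2 \neq \sigma$. Since having two distinct definable V-topologies forces $K$ not to be separably closed, there is a prime $q$ with $(K^\times)^q \neq K^\times$; adjusting $q$ so that $q\neq\chr[K\sigma]$ (or $q=2$ with $\chr[K]\neq 2$), and using Lemma \ref{L:finite-extension-def-V} together with Fact \ref{F:def-of-val-finite-extension} to pass (if necessary) to a finite algebraic extension $L/K$ containing $\xi_q$ (preserving NIP, t-henselianity of $L$ with the lifted $\sigma$, and definability of the lifted $\tau_i$), I may also assume $\xi_q \in K$. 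Fix a non-$q$-th power $c \in K^\times$; by Lemma \ref{L:correspondence-between-roots} each coset of $(K^\times)^q$ in $K^\times$ is $\sigma$-open.

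The IP construction is as follows. For $k \geq 1$, pick distinct $a_1,\dots,a_k \in K$ and, for each $J \subseteq [k]$, consider the affine curve $C_J \subseteq \mathbb{A}^{k+1}_K$ cut out by
\begin{equation*}
y_i^{\,q} \;=\; \alpha_i^J\,(x-a_i), \qquad i=1,\dots,k,
\end{equation*}
with $\alpha_i^J=1$ if $i\in J$ and $\alpha_i^J=c$ if $i\notin J$. For a generic choice of $a_1,\dots,a_k$, the smooth projective model of $C_J$ is a geometrically integral nonsingular projective curve, and any $K$-rational point on $C_J$ gives $b_J := x$-coordinate with $b_J - a_i \in (K^\times)^q$ iff $i \in J$. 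An initial $K$-rational point on $C_J$ is produced using $\sigma$-t-henselianity: since each coset of $(K^\times)^q$ is $\sigma$-open, the set $W_J \subseteq K$ of admissible $b$'s is a non-empty $\sigma$-open. Applying Proposition \ref{P:approx-for-curves} to the smooth projective model of $C_J$, with $\tau_1:=\sigma$ and $\tau_2$ the distinct definable topology, I find $b_J$ in any prescribed $\tau_2$-open subset of $K$. Since $k$ is arbitrary, the ring-theoretic formula $\varphi(x,y) := \bigl(y-x \in (K^\times)^q\bigr)$ has IP witnessed by $(a_i)_{i\le k}$ and $(b_J)_{J\subseteq [k]}$, contradicting NIP.

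The principal obstacle is the algebro-geometric step: confirming that $C_J$ is geometrically integral and exhibiting the initial $K$-rational point to invoke Proposition \ref{P:approx-for-curves}. Both can be handled by choosing the $a_i$ sufficiently generically with respect to $\sigma$ (for instance, $\sigma$-close to a common $q$-th power base point, so that Lemma \ref{L:correspondence-between-roots} supplies $K$-roots for a convenient starting pattern $J_0$). Secondary obstacles are the reduction to a finite extension containing $\xi_q$ via Lemma \ref{L:finite-extension-def-V} and Fact \ref{F:def-of-val-finite-extension}, the case $(K^\times)^q = K^\times$ for all small $q$ (which forces $K$ to be large and close to separably closed, ruling out two distinct V-topologies by direct argument), and handling the possibility $\sigma = \tau_1$, which is harmless since only the distinctness $\sigma \neq \tau_2$ and the definability of $\tau_2$ enter the IP computation.
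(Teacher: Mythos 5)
Your overall architecture (reduce to two externally definable independent valuations, adjoin $\xi_q$, build curves of the form $y_i^q = (\text{linear in }x)$, invoke Proposition \ref{P:approx-for-curves}) parallels the paper's, but the way you encode the set $J$ is different, and that is where the argument breaks. You encode $J$ by whether $b_J-a_i$ lies in $(K^\times)^q$ or in $c^{-1}(K^\times)^q$, so that your witnessing formula $\varphi(x,y):=(y-x\in (K^\times)^q)$ is a pure ring formula. If this worked it would show that \emph{every} t-henselian field containing $\xi_q$ and a non-$q$-th power has IP as a pure field --- but $\mathbb{Q}_p$ is such a field and is NIP. The precise point of failure is the unproved assertion that $W_J=\bigcap_{i\in J}\bigl(a_i+(K^\times)^q\bigr)\cap\bigcap_{i\notin J}\bigl(a_i+c^{-1}(K^\times)^q\bigr)$ is non-empty: it is indeed $\sigma$-open, being a finite intersection of open cosets, but a finite intersection of non-empty opens can be empty, and in $\mathbb{Q}_p$ it \emph{must} be empty for some pattern once $k$ is large (else $\varphi$ would have IP there). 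Your fallback of taking the $a_i$ $\sigma$-close to a common base point realizes only one pattern $J_0$: once $b$ is forced into a prescribed coset near one $a_i$, the classes of the remaining $b-a_j$ are pinned down by the classes of $a_i-a_j$. Proposition \ref{P:approx-for-curves} cannot rescue this, since it presupposes non-empty open subsets of $C_J(K)$ in both topologies --- i.e.\ it presupposes the $K$-rational points you are trying to produce. A telling symptom is that $\tau_2$ never actually enters your IP computation; with only one topology in play no IP can arise.

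The paper's encoding is designed exactly to dodge this. All the elements $a_i+x$ are made $q$-th powers simultaneously by a \emph{single} $v_1$-open condition $v_1(x)>\gamma$ (the $a_i$ are chosen $v_1$-close to $1$, and $K^q$ contains a $v_1$-ball around $1$); the subset $S$ is then encoded not by which coset $a_i+x$ lies in, but by which of its $q$ roots is $v_2$-close to $1$ rather than to $\xi_q$ (Lemma \ref{L:correspondence-between-roots} applied to each valuation separately). Realizing an arbitrary pattern then only requires a point of the curve lying in one $v_1$-open set and one $v_2$-open set, each non-empty for trivial reasons, and their intersection is non-empty by strong approximation for independent topologies --- this is where the second topology does real work --- so the witnessing formula $\psi(u;v,w)$ necessarily mentions both valuations. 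Two further points your sketch leaves untreated, though they are minor by comparison: the passage through the perfect hull and the Frobenius twisting needed to land back in $K$ when $K$ is imperfect (Proposition \ref{P:approx-for-curves} is stated for perfect fields), and the verification that the chosen t-henselian topology is (externally) definable, which in the real closed case requires the detour through a saturated extension and the standard valuation.
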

\begin{proof}
Assume towards a contradiction that $K$ admits two distinct definable V-topologies. 

We may assume that one of the topologies in question is t-henselian. Indeed, if $K$ is separably closed, then any valuation on $K$ is henselian (and therefore inducing a t-henselian topology). 
If $K$ is a real closed field, then some saturated $\mathcal K\succ K$ has an externally definable henselian valuation (the standard valuation), and $\mathcal K$ expanded with this valuation has NIP. On the other hand, by Lemma \ref{L:def-top}, $\mathcal{K}$ has two distinct definable V-topologies coming from $K$. One of them must be distinct from the topology induced by the externally definable standard valuation. 
Finally, if $K$ is neither real closed nor separably closed, then the $t$-henselian $V$-topology is always 
definable in the language of rings 
(see \cite[p.~203]{Pralg}).

So we are reduced to showing that if $K$ admits two distinct definable V-topologies, one of which is t-henselian, then $K$ has IP. Passing to an elementary extension, by Proposition \ref{P:defV=extdef}, we may assume that $K$ admits two non-trivial externally definable independent valuations, $v_1,v_2$ with $v_1$ t-henselian. Since $K$ has NIP, by Section \ref{ss:NIP} we may add these valuation to the language while preserving NIP. We thus assume that $(K,v_1,v_2)$ has NIP with $v_1$ t-henselian.
%

Let $q=2$ if $\chr[K]\neq 2$ and any other prime if $\chr[K]=2$. 
We may assume that $K$ contains a primitive $q$-th root of unity $\xi_q$. Indeed, by Fact \ref{F:def-of-val-finite-extension}, $(K(\xi_q),v'_1,v'_2)$ is interpretable in $(K,v_1,v_2)$ where $v_i'$ is some extension of $v_i$ to $K(\xi_q)$. Since $v_1,v_2$ were independent, so are $v_1',v_2'$ and since NIP is preserved under interpretations,  it will suffice to show that $(K(\xi_q),v'_1,v'_2)$ has IP. So we may as well assume that $K=K(\xi_q)$.

Since the derivative of the map $x\mapsto x^q$ at $x=1$ is non-zero, by Lemma \ref{L:PZ}, $K^q$ contains an open subset defined by $v_1(x-1)>\gamma$. We may assume that $\gamma>0$ and after replacing $\gamma$ by $q\gamma$ we may further assume that $\gamma$ is $q$-divisible. Note that the assumptions of Lemma \ref{L:correspondence-between-roots} are met. Let $\chi(y):=v_1(y-1)>\gamma\wedge v_2(y-1)>0$. 
\begin{nclaim}
$\chi(y)$ defines an infinite set.
\end{nclaim}
\begin{claimproof}
Since for any $n$ each $v_i$-open set contains $n$ disjoint open balls, the result follows by the strong approximation theorem (Fact \ref{F:strong-approx-ift}).
\end{claimproof}

\begin{nclaim}
If $b \models \chi(y)$ then $b\in K^q$ and for $i=1,2$ there are  one-to-one correspondence \[j_i:\{a\in K: a^q=b\}\to \{0,\dots, q-1\}\] such that $v_1(a-\xi_q^{j_1(a)})>\gamma/q$ and $v_2(a-\xi_q^{j_2(a)})>0$.
\end{nclaim}
\begin{claimproof}
By choice of $\gamma$ if $b\models \chi(y)$, we have $b\in K^q$. The rest is Lemma \ref{L:correspondence-between-roots}.
%
\end{claimproof}

Consider $(\per K,v_1,v_2)$, the perfect hull of $K$, with the unique extensions of $v_1$ and $v_2$ (uniqueness holds by \cite[Corollary 3.2.10]{EnPr}).  In everything that follows, if $K$ is already perfect one may take $p=1$ and the same computations work. Either way, since the trivial valuation also extends uniquely to $\per K$, the valuations $v_1$ and $v_2$ on $\per K$ are still independent. 

Note that $\chi(\per K)\subseteq (\per{K})^q$. For if $v_1(a^{1/p^e}-1)>\gamma$ then $v_1(a-1)>\gamma$ and by the previous claim there exists $b\in K$ satisfying $b^q=a$. Now note that $(b^{1/p^e})^q=a^{1/p^e}$.

Fix a natural number $n$. Let $a_1,\dots,a_n\in \chi(\per K)$ be distinct, and let $C$ be the curve over $\per{K}$ defined by the equations \[y_1^q=a_1+x\]
\[\vdots\]
\[y_n^q=a_n+x.\]

By, e.g. \cite[Chapter 10, Exercise 4]{FrJa}, $C$ is a geometrically integral curve over $K$ (the proof actually shows that it is also of dimension $1$). One easily checks that it is nonsingular by a direct calculation.

Consider the open subset $U_1(\per K)\subseteq \per K$ defined by \[\{v_1(y_i-1)>\gamma/q\}_{i=1}^n\cup\{v_1(x)>\gamma\},\] 
 and for every $S\subseteq \{1,\dots,n\}$ let $U_{2,S}(\per K)\subseteq \per K$ be the open subset defined  by
\[\{v_2(y_i-1)>0\}_{i\in S}\cup \{v_2(y_i-\xi_q)>0\}_{i\notin S}\cup \{v_2(x)>0\}.\]


\begin{nclaim}
$U_1(\per K)\cap C$ is non-empty and for every $S\subseteq \{1,\dots,n\}$, $U_{2,S}(\per K)\cap C$ is non-empty.
\end{nclaim}
\begin{claimproof}
We will show that $U_1(\per K)\cap C$ is infinite, the proof for $U_{2,S}$ is similar. For every $x$ satisfying $v_1(x)>\gamma$, choose $y_i$ to be the $q$-th root of $a_i+x$ satisfying $v_1(y_i-1)>\gamma/q$.
%

\end{claimproof}

%

%
For every natural number $e$, let $\psi_{e,1}(z)$ be the formula $v_1(z-1)>p^e\gamma/q$
 and  $\psi_{e,2}(z)$ be the formula $v_2(z-1)>0$.
Finally, let $\psi_e(u;v)$ be the formula $\forall z\left(z^q=u+v\wedge \psi_{e,1}(z)\rightarrow \psi_{e,2}(z)\right)$.

\begin{nclaim}
There exists a large enough natural number $e$, such that $a_j^{p^e}\in K$ for $1\leq j\leq n$ and for every $S\subseteq \{1,\dots,n\}$ there is an $\tilde{x}_S\in K$ such that 
\[(K,v_1,v_2)\models \psi_e(a_i^{p^e},\tilde{x}_S)\Longleftrightarrow i\in S.\]
%
\end{nclaim}
\begin{claimproof}
For every $S\subseteq\{1,\dots,n\}$, after applying Proposition \ref{P:approx-for-curves} to $U_1(\per K)\cap C$ and $U_{2,S}(\per K)\cap C$, one is supplied with with an $n+1$-tuple \[(x_S,b_{S,1},\dots,b_{S,n})\in \left( U_1(\per K)\cap C\right)\cap \left( U_{2,S}(\per K)\cap C\right).\]

Let $e$ be a large enough integer such that for every such $S$, \[x_S^{p^e},b_{S,1}^{p^e},\dots,b_{S,n}^{p^e},a_1^{p^e},\dots,a_n^{p^e}\in K.\]

Note that for every $S\subseteq \{1,\dots,n\}$ and for every $1\leq i\leq n$, since $(b_{S,i},x_S)$ satisfied 
\[y_i^q=x+a_i\] then, after applying the Frobenius map $e$-times, $(b_{S,i}^{p^e},x_S^{p^e})$ satisfies
\[y_i^q=x+a_i^{p^e}.\]

Consequently, by setting for every $S\subseteq \{1,\dots,n\}$ $\tilde{x}_S:=x_S^{p^e}$, we have
\[(K,v_1,v_2)\models \psi_e(a_i^{p^e},\tilde{x}_S)\Longleftrightarrow i\in S.\]

%
%
\end{claimproof}

Now, let $\psi_1(z,w):=v_1(z-1)>v_1(w)$, $\psi_2(z):=v_2(x-1)>0$ and 
\[
\psi(u;v,w):=(\forall z)(z^q=u+v\land \psi_1(z,w)\to \psi_2(z)).
\] 
We will show that $\psi(u;v,w)$ has IP. We have to show that for all $n$ there are $(c_i: i<n)$ and $\{b_S: S\subseteq [n]\}$ ($b_S$ of length 2) such that $(K,v_1,v_2)\models \psi(c_i;b_S)\iff i\in S$. So given $n$ fix distinct $a_1,\dots a_n\in \chi(\per K)$,  $e$ as provided by the last claim and for all $S\subseteq [n]$ set $b_S:=(\tilde{x}_S, y_S)$ where $y_S$ is an element of $vK$ of valuation $p^e\gamma/q$ and  $\tilde{x}_S$ is provided by the last claim. So $(K,v_1,v_2)\models \psi_e(a_i^{p^e},\tilde{x}_S)$ if and only if $(K,v_1,v_2)\models \psi(a_i^{p^e};b_S)$ which -- by the conclusion of the above claim -- holds if and only if $i\in S$, showing that $\psi(u;v,w)$ has IP. 

This concludes the proof of the theorem.

\end{proof}

Note that a definable valuation on a field induces a definable V-topology.

\begin{corollary}\label{C:hens-incompIP}
Let $(K,v_1,v_2)$ be a field with two non-trivial incomparable (resp. independent) valuations. If $(K,v_1)$ is henselian (resp. t-henselian) then $(K,v_1,v_2)$ has IP.

Consequently, any two externally definable (t-)henselian valuations on a NIP field $K$ (possibly with additional structure) are comparable.
\end{corollary}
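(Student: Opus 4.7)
The plan is to reduce both parts of the main statement to Theorem~\ref{T:indep-val=ip} by contraposition. For the t-henselian and independent case, I would use that independence of $v_1$ and $v_2$ is exactly the assertion that they induce distinct V-topologies on $K$. In the language of $(K,v_1,v_2)$ both valuation rings are definable, so by Lemma~\ref{L:def-top} both V-topologies are definable V-topologies; the t-henselianity of $v_1$ makes one of them t-henselian. Were $(K,v_1,v_2)$ NIP, Theorem~\ref{T:indep-val=ip} would force at most one definable V-topology, a contradiction, so $(K,v_1,v_2)$ has IP.

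For the henselian and incomparable case, I would reduce to the previous case via a coarsening argument. If $v_1,v_2$ are already independent, the previous paragraph applies (henselian implies t-henselian). Otherwise they admit a minimal nontrivial common coarsening $w$ with $\CO_w=\CO_{v_1}\cdot\CO_{v_2}$, which is henselian as a coarsening of the henselian $v_1$. Passing to the residue field $Kw$, the induced valuations $\bar v_1,\bar v_2$ are nontrivial by incomparability of $v_1,v_2$; independent by the minimality of $w$ (any common nontrivial coarsening on $Kw$ would pull back to a proper refinement of $w$ coarsening both $v_i$, contradicting minimality); and $\bar v_1$ is henselian, by the standard fact that if $w$ and $v_1=\bar v_1\circ w$ are both henselian then so is $\bar v_1$. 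Since $w$ is definable from $v_1,v_2$, the structure $(Kw,\bar v_1,\bar v_2)$ is interpretable in $(K,v_1,v_2)$, and interpretations preserve NIP; hence the IP of $(Kw,\bar v_1,\bar v_2)$ obtained from the first case transfers back to $(K,v_1,v_2)$.

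For the consequence, suppose $K$ is NIP and $v_1,v_2$ are externally definable (t-)henselian valuations. In the Shelah expansion $K^{\mathrm{sh}}$ both $v_1,v_2$ become definable, and $K^{\mathrm{sh}}$ remains NIP. If $v_1,v_2$ were incomparable (resp.\ independent), the main part of the corollary applied to the reduct of $K^{\mathrm{sh}}$ to the language of rings expanded by predicates for $v_1,v_2$ would give IP, contradicting the fact that this reduct is definable inside the NIP structure $K^{\mathrm{sh}}$.

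I do not expect a serious obstacle: once Theorem~\ref{T:indep-val=ip} is in hand, everything is routine model-theoretic bookkeeping plus two elementary valuation-theoretic facts (that $\CO_{v_1}\cdot\CO_{v_2}$ is the minimal common coarsening, and that henselianity passes between a valuation, its coarsenings, and the corresponding residue-field valuations). The only mildly delicate verification is the independence of $\bar v_1,\bar v_2$ on $Kw$, which is immediate from the definition of $w$ once stated explicitly.
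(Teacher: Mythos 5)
Your argument is correct and follows the paper's proof essentially verbatim: the independent case is immediate from Theorem~\ref{T:indep-val=ip}, the incomparable case passes to the residue field of the minimal common coarsening $w$ (checking, as you do, that the induced valuations are nontrivial, independent, and that $\bar v_1$ is henselian), and the consequence goes through the Shelah expansion. The one imprecision is your claim that $w$ is \emph{definable} from $v_1,v_2$: the ring $\mathcal{O}_{v_1}\cdot\mathcal{O}_{v_2}$ is a priori only externally definable (the paper invokes definability of $w$ in the Shelah expansion, which still preserves NIP and makes $(Kw,\bar v_1,\bar v_2)$ interpretable in a NIP structure), so the argument survives with that one-word adjustment.
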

\begin{proof}
If $v_1$ and $v_2$ are independent, the result follows from Theorem \ref{T:indep-val=ip}. Otherwise consider $w$, their common coarsening. Assume towards a contradiction that $(K,v_1,v_2)$ has NIP, and thus, since $w$ is definable in the Shelah expansion which also has NIP, $(K,v_1,v_2,w)$ has NIP. Consequently, $(Kw,\overline{v_1},\overline{v_2})$ satisfies the assumptions of Theorem \ref{T:indep-val=ip} and thus has IP, contradiction.

Finally, since the Shelah expansion of $K$ has NIP, adding the two valuations to the structure preserves NIP. If they were not comparable (resp. dependent), that would contradict the above. 
\end{proof}
%
%

\begin{example}\label{E:sep-closed-twoval}
A separably closed field with two incomparable non-trivial valuations has IP.
\end{example}
%
%
%

\begin{corollary}
The following are equivalent:
\begin{enumerate}
\item Every NIP  valued field is henselian.
\item No NIP  field defines two independent valuations.
\item No NIP  field defines two incomparable valuations.
\end{enumerate}
The statement remains true if we replace NIP, throughout, by ``strongly dependent'' or by ``finite dp-rank''.
\end{corollary}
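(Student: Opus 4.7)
My plan is to establish the cycle $(1) \Rightarrow (3) \Rightarrow (2) \Rightarrow (1)$, which lets me invoke the just-proved Corollary \ref{C:hens-incompIP} for the hardest step while reducing the rest to elementary valuation theory. First I would note that $(3) \Rightarrow (2)$ is essentially formal: if $v_1, v_2$ are comparable, say $\mathcal{O}_{v_1} \subseteq \mathcal{O}_{v_2}$, then $v_2$ itself is a nontrivial common coarsening, so comparable valuations are dependent; contrapositively, any independent pair is automatically incomparable. Hence the absence of an incomparable definable pair rules out an independent one.

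For $(1) \Rightarrow (3)$, I would assume toward a contradiction that a NIP field $K$ defines two incomparable valuations $v_1, v_2$. Since definable valuations are named in the Shelah expansion (Section \ref{ss:NIP}), each of $(K, v_1)$ and $(K, v_1, v_2)$ retains NIP. Hypothesis $(1)$ then forces $v_1$ to be henselian as a valued field, whereupon Corollary \ref{C:hens-incompIP} applied to $(K, v_1, v_2)$ yields IP, a contradiction.

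The remaining and most delicate step is $(2) \Rightarrow (1)$. Given a NIP valued field $(K, v)$ with $v$ non-henselian, I would pick a finite separable extension $L/K$ on which $v$ has at least two distinct extensions $v_1, v_2$. These must be incomparable: a strict inclusion $\mathcal{O}_{v_1} \subsetneq \mathcal{O}_{v_2}$ would correspond to a nontrivial convex subgroup $H \subseteq v_1 L$ satisfying $H \cap vK = \{0\}$, but the fundamental inequality makes $vK$ of finite index in $v_1 L$, forcing $H = \{0\}$. By Fact \ref{F:def-of-val-finite-extension}, $v_1$ and $v_2$ are definable in the $K$-induced structure on $L$, so $(L, v_1, v_2)$ is interpretable in $(K, v)$ and inherits NIP. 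If $v_1, v_2$ are independent on $L$ we already contradict $(2)$; otherwise, as in the proof of Corollary \ref{C:hens-incompIP}, the residue valuations $\bar v_1, \bar v_2$ on the interpretable residue field $Lw$ of their minimal common coarsening $w$ are independent and definable in a NIP structure, again contradicting $(2)$.

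The final sentence of the corollary follows because every tool used above -- interpretability in finite extensions, passage to the residue field of a (Shelah-)definable coarsening, and Theorem \ref{T:indep-val=ip} itself -- preserves the classes of strongly dependent theories and theories of finite dp-rank just as it preserves NIP, so the same cycle of implications runs verbatim. The main obstacle I anticipate is making sure that in the dependent subcase of $(2) \Rightarrow (1)$ the descent to $Lw$ really stays within the interpretable structure of $(K,v)$ (so that NIP, strong dependence, or finite dp-rank is inherited); this is precisely where the Shelah-expansion paragraph of Section \ref{ss:NIP} and the definability of coarsenings have to be applied carefully.
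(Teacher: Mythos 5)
Your proof is correct, and all three implications go through. It is, however, organized as the reverse cycle $(1)\Rightarrow(3)\Rightarrow(2)\Rightarrow(1)$, whereas the paper proves $(1)\Rightarrow(2)\Rightarrow(3)\Rightarrow(1)$; the underlying ingredients (Corollary \ref{C:hens-incompIP}, Fact \ref{F:def-of-val-finite-extension}, and the coarsening/Shelah-expansion descent to $Kw$) are the same, but they are distributed differently. Concretely: your $(3)\Rightarrow(2)$ is the trivial direction (independent implies incomparable for non-trivial valuations), so your $(2)\Rightarrow(1)$ has to absorb both the finite-extension argument and the reduction from an incomparable pair to an independent pair via the residue field of the minimal common coarsening --- which is exactly the content the paper splits between its $(2)\Rightarrow(3)$ and $(3)\Rightarrow(1)$. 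Your observation that the two extensions of $v$ to $L$ are automatically incomparable (via the fundamental inequality) is correct and matches what the paper implicitly uses. The one step where your route is genuinely cleaner is $(1)\Rightarrow(3)$: you apply $(1)$ to get $v_1$ henselian and then invoke Corollary \ref{C:hens-incompIP} directly, whereas the paper's $(1)\Rightarrow(2)$ takes a detour through the fact that a field with two independent henselian valuations is separably closed and then cites Example \ref{E:sep-closed-twoval}. Your closing remark about strongly dependent and finite dp-rank matches the paper's ``obvious adaptations'' comment, and your flagged concern about the descent to $Lw$ staying interpretable is handled exactly as you suggest: the coarsening is externally definable, so one works in the Shelah expansion, which preserves all three dividing lines.
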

\begin{proof}
We prove the implications for NIP. Obvious adaptations give the implications for finite dp-rank and strongly dependent.

$(1)\implies (2)$. If a NIP field defines two independent valuations, then since by assumption both are henselian it must be separably closed, by \cite[Theorem 4.4.1]{EnPr}. This contradicts Example \ref{E:sep-closed-twoval}.

$(2)\implies (3)$. Let $v_1$ and $v_2$ be two incomparable definable valuations on a NIP field $K$ and let $w$ be their common coarsening. If $w$ is trivial then $v_1$ and $v_2$ are independent and this contradicts $(2)$. If $w$ is not trivial then since $w$ is definable in the Shelah expansion (which has NIP), $(Kw,\bar v_1, \bar v_2)$ the residue field of $w$ together with the two induced valuations, is a NIP field with two independent definable valuations, contradicting $(2)$. 

$(3)\implies (1)$. Let $K$ be a valued field which is not henselian, then there is a finite extension $L/K$ with two incomparable valuations. Since $L$, and both extensions of $v$ to $L$, are interpretable by \cite[Lemma 9.4.8]{johnson}, we get a field with two incomparable valuations which has NIP, contradiction.

\end{proof}

We will say that a field $K$ satisfies the \emph{henselianity conjecture} if for every valuation $v$ on $K$, if $(K,v)$ has NIP then $v$ is henselian. Note that there is no harm in assuming that $K$ has NIP. The above theorem allows us to prove the henselianity conjecture for some valued fields.

\begin{proposition}\label{P:nip-val-rcf}
Let $K$ be a real closed field and $v$ a non-trivial valuation on $K$. Then the following are equivalent:
\begin{enumerate}
\item $(K,v)$ has NIP;
\item $v$ is convex;
\item $v$ is henselian.
\end{enumerate}
\end{proposition}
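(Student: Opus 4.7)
The plan is to prove (2)$\Leftrightarrow$(3) classically and (3)$\Rightarrow$(1) via a known transfer principle, and to reserve the bulk of the argument for (1)$\Rightarrow$(2), which is where Theorem \ref{T:indep-val=ip} does the main work. For (2)$\Leftrightarrow$(3), on a real closed field a valuation is henselian if and only if its valuation ring is convex. In one direction, if $v$ is convex then $Kv$ is real closed and Hensel's lemma is a consequence of the intermediate value theorem in $K$. In the other direction, if $v$ is henselian then applying Hensel to $Y^2-(1+a)$ for any $a\in\mathcal{M}_v$ (note $\chr[K]=0$) shows $1+a$ is a square, hence positive, so $a>-1$, and symmetrically $a<1$; thus $\mathcal{M}_v$ consists of infinitesimals and $\mathcal{O}_v$ is convex. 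For (3)$\Rightarrow$(1), the pair $(K,v)$ is then henselian of equicharacteristic zero with real closed residue field and divisible ordered abelian value group. The Delon--B\'elair NIP-transfer principle for such henselian valued fields reduces NIP of $(K,v)$ to NIP of $Kv$ and $vK$, both of which are standard.

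For (1)$\Rightarrow$(2), the starting observation is that $K$ being real closed makes the order topology $\tau_{ord}$ a $t$-henselian V-topology on $K$ that is already definable in the language of rings. The valuation topology $\tau_v$ is also a definable V-topology on $(K,v)$. Since $(K,v)$ is assumed NIP, Theorem \ref{T:indep-val=ip} applies and forces $\tau_v=\tau_{ord}$, equivalently, $v$ and the natural valuation $v_{nat}$ of $K$ are dependent. Let $w:=v\cdot v_{nat}$ be their finest common coarsening. Because $w$ coarsens $v_{nat}$, it is convex and the residue field $Kw$ is again real closed; because $w$ coarsens $v$, it is externally definable in $(K,v)$, so $(K,v,w)$ is NIP and the interpretable structure $(Kw,\bar v)$, where $\bar v$ denotes the valuation on $Kw$ induced by $v$, is NIP as well. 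Moreover, by the minimality of $w$, the induced valuations $\bar v$ and $\bar v_{nat}$ are independent on $Kw$, and $\bar v_{nat}$ coincides with the natural valuation of $Kw$ (the order on $Kw$ is the pushforward through the convex $w$ of the order on $K$). If $\bar v$ were non-trivial, $(Kw,\bar v)$ would be a real closed (hence $t$-henselian) NIP field carrying two distinct definable V-topologies, $\tau_{\bar v}\neq\tau_{ord}$, contradicting Theorem \ref{T:indep-val=ip} applied to $(Kw,\bar v)$. Hence $\bar v$ is trivial, so $v=w$, and $v$ is convex.

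The main obstacle is the gap between the topological conclusion $\tau_v=\tau_{ord}$ delivered by a single application of Theorem \ref{T:indep-val=ip} and the algebraic statement ``$\mathcal{O}_v$ is convex''. Topological equality only gives dependence of $v$ with $v_{nat}$, which is strictly weaker than $\mathcal{O}_v\supseteq\mathcal{O}_{v_{nat}}$. The key device is to pass to the residue field of the finest common coarsening $w$ in order to manufacture there an \emph{independent} pair $(\bar v,\bar v_{nat})$ on a field which is still real closed (hence $t$-henselian) and still NIP, so that a second application of Theorem \ref{T:indep-val=ip} eliminates the only remaining possibility for non-convexity, namely that $v$ is a proper refinement of $w$.
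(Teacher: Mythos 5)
Your proposal is correct and follows essentially the same route as the paper: the heart of the matter, $(1)\Rightarrow(2)$, is handled in both cases by playing the ($t$-henselian, ring-definable) order topology against $\tau_v$ via Theorem \ref{T:indep-val=ip}, then passing to the residue field of a convex coarsening (which is the natural valuation / its join with $v$) and applying the theorem a second time there. The only cosmetic differences are that the paper first passes to an elementary extension so that the natural valuation is non-trivial and invokes Corollary \ref{C:hens-incompIP} to get comparability, whereas you work with the finest common coarsening directly (which is how that corollary is proved anyway), and that for $(3)\Rightarrow(1)$ the paper argues via convexity and the Shelah expansion rather than the Delon transfer principle.
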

\begin{proof}

$(1)\implies (2)$. Assume that $(K,v)$ has NIP but $v$ is not convex.  By passing to an elementary extension we may assume that $K$ has a non-trivial convex valuation, e.g. $w_\infty$ whose maximal ideal are the infinitesimals. Since $w_\infty$ is definable in $K^{sh}$, $(K,w_\infty,v)$ has NIP as well. If $v$ and $w_\infty$ were incomparable then, since $(K,w_\infty)$ is henselian by \cite[Theorem 4.3.7]{EnPr}, we would get a contradiction to Corollary \ref{C:hens-incompIP}. Thus they are comparable. If $v$ were coarser than $w_\infty$ then it would be henselian and consequently convex by \cite[Lemma 4.3.6]{EnPr}, contradiction.

As a result, $v$ must be finer than $w_\infty$. Since, as noted above, $w_\infty$ is externally definable, we may pass to the residue field $Kw_\infty$. The residue field $Kw_\infty$ is also real closed and $(Kw_\infty,\bar v)$ has NIP since the induced valuation $\bar v$ is interpretable in $(K,v,w_\infty)$ (which has NIP). But $(Kw_\infty,\bar v)$ has no non-trivial convex valuation, 
so the ordering must be archimedean and the order-topology must be independent from $\bar v$. Consequently $(Kw_\infty,<)$ may be embedded into the reals. Since $Kw_\infty$ is real-closed, and the reals with the ordering are t-henselian, the V-topology the order defines in $Kw_\infty$ is also t-henselian.  Applying Theorem \ref{T:indep-val=ip} we deduce that $(Kw_\infty,\bar v)$ has IP, contradiction. 

$(2)\implies (3)$. This is \cite[Theorem 4.3.7]{EnPr}.

$(3)\implies (1)$. By \cite[Lemma 4.3.6]{EnPr}, $v$ is convex and thus definable in the Shelah expansion $K^{sh}$. Since the latter has NIP, $(K,v)$ has NIP.


\end{proof}

We can generalize the above to any almost real closed field (i.e. a field admitting a henselian valuation with a real closed residue field).

\begin{corollary}\label{C:hen-arc}
Let $K$ be an almost real closed field and $v$ a non-trivial valuation on $K$. Then the following are equivalent:
\begin{enumerate}
\item $(K,v)$ has NIP;
\item $v$ is convex;
\item $v$ is henselian.
\end{enumerate}
\end{corollary}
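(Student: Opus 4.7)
The plan is to reduce to Proposition \ref{P:nip-val-rcf} using a henselian valuation $u$ witnessing that $K$ is almost real closed, so $Ku$ is real closed. By Baer-Krull I fix an ordering $<$ on $K$ making $\mathcal O_u$ convex. The key structural observation is that any henselian or convex valuation $v$ on $K$ is comparable with $u$: for henselian $v$, this is F.~K.~Schmidt's theorem (\cite[Theorem 4.4.1]{EnPr}, applicable since $K$ is not separably closed); for convex $v$, because the convex subrings of an ordered field are linearly ordered by inclusion.

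The equivalence (2)$\iff$(3) would then follow from a case analysis on the comparison of $v$ and $u$. If $\mathcal O_u\subseteq \mathcal O_v$, then $v$ is automatically henselian as a coarsening of $u$, and convex because for any $0<b<a$ with $a\in \mathcal O_v$ one has $b/a\in[0,1]\subseteq \mathcal O_u\subseteq \mathcal O_v$. If $\mathcal O_v\subseteq \mathcal O_u$, then $v=u\circ\bar v$ for the induced valuation $\bar v$ on $Ku$; by the composition theorem for henselian valuations $v$ is henselian iff $\bar v$ is, and convexity transfers between $v$ on $(K,<)$ and $\bar v$ on $Ku$ (with its induced ordering) since $\mathcal O_v$ is the preimage of $\mathcal O_{\bar v}$ under the residue map $\mathcal O_u\to Ku$. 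Proposition \ref{P:nip-val-rcf} applied to the real closed field $Ku$ then supplies the equivalence of henselianity and convexity for $\bar v$, which lifts back to $v$.

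For (3)$\implies$(1) I would run the same case analysis, this time to exhibit $v$ as externally definable in $(K,u)^{sh}$: in the coarsening case via the convex subgroup of $uK$ determining $v$, and in the refinement case via the convexity of $\bar v$ on $Ku$ (Proposition \ref{P:nip-val-rcf}), which makes $\bar v$ externally definable in $(Ku)^{sh}$ and hence makes $v$ externally definable in $(K,u)^{sh}$. Since $(K,u)$ is a henselian valued field of equal characteristic zero with NIP residue field (real closed) and NIP value group (an ordered abelian group is NIP by Gurevich-Schmitt), it is NIP by the standard AKE-style transfer; as Shelah expansions preserve NIP, $(K,v)$ is NIP as a reduct.

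The main obstacle is (1)$\implies$(3), for which I would adapt the proof of Proposition \ref{P:nip-val-rcf}. The real closed case is already handled there, so assume $K$ is neither separably closed nor real closed. By \cite[p.~203]{Pralg} the $t$-henselian V-topology induced by $u$ is then definable in the ring language, hence in $(K,v)^{sh}$, which is a NIP $t$-henselian field in the expanded language. Theorem \ref{T:indep-val=ip} forces the $v$- and $u$-topologies to coincide; in particular $v$ and $u$ admit a non-trivial common coarsening $w$, necessarily henselian as a coarsening of $u$. If $v$ and $u$ were incomparable, the induced valuations $\bar v,\bar u$ on $Kw$ would be non-trivial and independent, with $\bar u$ henselian on the NIP field $(Kw,\bar u,\bar v)$, contradicting the first part of Corollary \ref{C:hens-incompIP}. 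Hence $v$ and $u$ must be comparable, and the case analysis of the second paragraph concludes that $v$ is henselian.
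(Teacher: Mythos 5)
Your overall architecture --- fix a henselian $u$ with $Ku$ real closed, show $v$ must be comparable with $u$, and transfer the question to the residue field via Proposition \ref{P:nip-val-rcf} --- is essentially the paper's. Your direct proof of $(2)\Leftrightarrow(3)$ and your proof of $(3)\Rightarrow(1)$ (exhibiting $v$ as externally definable in $(K,u)^{sh}$ by the coarsening/refinement case analysis, with $(K,u)$ NIP by Delon's transfer theorem) are sound alternatives to the paper's citations of \cite[Lemma 4.3.6]{EnPr}, \cite[Proposition 2.2(iii)]{delonfarre} and the second use of \cite[Theorem A]{JahNIP}, modulo the usual care about which ordering witnesses convexity.

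The genuine gap is in $(1)\Rightarrow(3)$, at the phrase ``the NIP field $(Kw,\bar u,\bar v)$''. You know $(K,v)$ has NIP, and since $w$ coarsens $v$ it is externally definable, so $(Kw,\bar v)$ has NIP; but nothing in your argument makes $\bar u$ (equivalently $u$) externally definable in $(K,v)$, and without that you cannot add $\bar u$ to the language while preserving NIP, so Corollary \ref{C:hens-incompIP} has no NIP hypothesis to contradict. The coincidence of the $u$- and $v$-topologies, which is all Theorem \ref{T:indep-val=ip} gives you, says nothing about definability of the set $\mathcal{O}_u$. This is precisely the point where the paper invokes \cite[Theorem A]{JahNIP}: since $Ku$ is real closed, $u$ is definable in $K^{sh}$, hence $(K,v,u)$ has NIP and Corollary \ref{C:hens-incompIP} yields comparability in one stroke, with no detour through Theorem \ref{T:indep-val=ip} at all. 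Your route can be repaired without that citation, but not as written: rather than putting $\bar u$ into the language, apply Theorem \ref{T:indep-val=ip} a second time to the NIP t-henselian field $(Kw,\bar v)$, whose t-henselian topology (the $\bar u$-topology) is ring-definable when $Kw$ is neither separably closed nor real closed, to contradict the independence of $\bar u$ and $\bar v$; the case $Kw$ real closed is disposed of by Proposition \ref{P:nip-val-rcf} together with F.~K.~Schmidt, and $Kw$ cannot be separably closed since its residue field under $\bar u$ is the real closed field $Ku$. One of these two fixes needs to be made explicit.
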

\begin{proof}
$(1)\implies (3)$. Let $w$ be a henselian valuation on $K$ with real closed residue field. Consequently, by \cite[Theorem A]{JahNIP}, $w$ is definable in $K^{sh}$ and thus $(K,v,w)$ has NIP. So by Corollary \ref{C:hens-incompIP}, $v$ and $w$ must be comparable. If $v$ is coarser than $w$, we are done. Otherwise, since $Kw$ is real closed and $(Kw,\bar v)$ has NIP, where $\bar v$ is the induced valuation on $Kw$, by Proposition \ref{P:nip-val-rcf}, $\bar v$ is henselian. As $v$ is the composition of the henselian valuations $w$ and $\bar v$, it is also henselian.

$(3)\implies (2)$. This is \cite[Lemma 4.3.6]{EnPr}.

$(2)\implies (1)$. Let $w$ be a henselian valuation on $K$ with real closed residue field. By \cite{DelHenselian}, $(K,w)$ has NIP and in particular so does $K$. By \cite[Proposition 2.2(iii)]{delonfarre}, $v$ is henselian and since, by \cite[Lemma 4.3.6]{EnPr}, $Kv$ is not separably closed,  by \cite[Theorem A]{JahNIP}, $v$ is definable in $K^{sh}$ and as a result $(K,v)$ has NIP.
\end{proof}

Once we know the henselianity conjecture for real closed fields, we can now prove:

\begin{proposition}\label{P:def-v-top-is-t-hen}
Let $K$ be a t-henselian NIP field, in some language expanding the language of rings. Then any definable V-topology on $K$ is t-henselian.

Consequently, if $K$ is henselian and $(K,w)$ has NIP then either $w$ is henselian or $w$ is finer than any henselian valuation on $K$. 
\end{proposition}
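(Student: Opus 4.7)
I would prove the first assertion by a case analysis on the algebraic type of $K$. If $K$ is separably closed, every valuation on $K$ is henselian, hence any V-topology induced by a valuation is t-henselian; the same holds for V-topologies induced by archimedean absolute values. If $K$ is real closed and $\tau$ is induced by an archimedean absolute value, then $\tau$ coincides with the order topology and is therefore t-henselian. If $\tau$ is induced by a valuation on a real closed $K$, I pass to an $\aleph_0$-saturated elementary extension $K^\ast$ (this is legitimate because being elementarily V-topological is first order, by Lemma~\ref{L:def-top}) and apply Proposition~\ref{P:defV=extdef} to realize $\tau$ through an externally definable valuation $v$ on $K^\ast$; since Shelah expansions preserve NIP, $(K^\ast, v)$ is a NIP valued real closed field and Proposition~\ref{P:nip-val-rcf} forces $v$ to be henselian, so $\tau$ is t-henselian. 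Finally, if $K$ is neither separably closed nor real closed, then \cite[p.~203]{Pralg} supplies a henselian valuation $u$ on $K$ definable in the language of rings, whose topology $\tau_u$ is a definable t-henselian V-topology on $K$; Theorem~\ref{T:indep-val=ip} then forces $\tau = \tau_u$, so $\tau$ is t-henselian.

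For the consequence I apply the first assertion to the NIP structure $(K, w)$, whose underlying field is t-henselian as $K$ is henselian: the V-topology $\tau_w$ is then t-henselian. Fix any henselian valuation $v$ on $K$; the aim is to show that, unless $w$ is henselian, one has $\mathcal{O}_w \subseteq \mathcal{O}_v$. The plan is to invoke Corollary~\ref{C:hens-incompIP} in its t-henselian form after making $v$ externally definable in $(K, w)$. By \cite[Theorem A]{JahNIP}, whenever the residue field $Kv$ is not separably closed, $v$ is definable in the Shelah expansion of $K$ (in the language of rings), and hence externally definable in $(K, w)$ too. The structure $(K, v, w)$ is then NIP, with $v$ externally definable henselian and $w$ externally definable and t-henselian, so $v$ and $w$ must be comparable. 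If $\mathcal{O}_v \subseteq \mathcal{O}_w$, then $w$ is a coarsening of the henselian $v$ and is therefore itself henselian; otherwise $\mathcal{O}_w \subseteq \mathcal{O}_v$, as required.

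The main obstacle is the residual case where $Kv$ is separably closed, in which Jahnke's theorem does not directly deliver external definability of $v$. I would handle this by replacing $v$ with a henselian coarsening $v'$ whose residue field is not separably closed, which reduces to the previous situation and still yields $\mathcal{O}_w \subseteq \mathcal{O}_{v'} \subseteq \mathcal{O}_v$ after combining with the chain structure of henselian valuations on NIP fields; the extreme sub-case in which no such coarsening exists forces $K$ itself to be separably closed, and then every valuation on $K$, including $w$, is automatically henselian, so the disjunction holds trivially.
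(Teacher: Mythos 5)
Your proof of the first assertion is essentially the paper's: the same three-way case split (separably closed / real closed via Proposition~\ref{P:defV=extdef} and Proposition~\ref{P:nip-val-rcf} / neither, via the ring-definable t-henselian valuation and Theorem~\ref{T:indep-val=ip}). That part is fine.

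The ``consequently'' part is where you diverge from the paper, and your route has a genuine gap. You want to make the henselian valuation $v$ externally definable in $(K,w)$ via \cite[Theorem A]{JahNIP}, which only applies when $Kv$ is not separably closed, and your fallback for the case $Kv$ separably closed does not work. First, the claim that the non-existence of a non-trivial henselian coarsening of $v$ with non-separably-closed residue field forces $K$ to be separably closed is false: take $K=\mathbb{C}((t))$ with $v$ the $t$-adic valuation; its only coarsening is trivial, its residue field is algebraically closed, and $K$ is far from separably closed. Second, even when such a coarsening $v'$ exists, the argument only yields $\mathcal{O}_w\subseteq\mathcal{O}_{v'}$, and since $\mathcal{O}_v\subseteq\mathcal{O}_{v'}$ as well, this says nothing about the relative position of $\mathcal{O}_w$ and $\mathcal{O}_v$: valuations refining a common valuation need not be comparable, and ``the chain structure of henselian valuations'' does not apply to the (possibly non-henselian) $w$. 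Recursing into the residue field $Kv'$ lands you back in the problematic sub-case, since there $\bar v$ is again henselian with separably closed residue field.

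The paper avoids external definability of $v$ altogether. From the first assertion applied to $(K,w)$, both the $v$- and $w$-topologies are t-henselian, so $v$ and $w$ must be dependent (independence would force $K$ to be separably closed, making $w$ henselian). Assuming they are incomparable, one takes their join $u$, which is externally definable in $(K,w)$ simply because it is a coarsening of $w$ (convex subgroups are externally definable); then $(Ku,\bar w)$ is NIP, $Ku$ is henselian via $\bar v$, $\bar w$ is not henselian, and $\bar v,\bar w$ are independent by choice of $u$ --- contradicting the dependence statement just established, applied on $Ku$. If you want to salvage your approach, replace the appeal to \cite[Theorem A]{JahNIP} by this join-and-pass-to-the-residue-field argument.
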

\begin{proof}
If $K$ is separably closed then any V-topology on $K$ is t-henselian.

Assume that $K$ is real closed. After passing to an elementary extension, by Proposition \ref{P:defV=extdef} it is enough to show that if $(K,v)$ has NIP for some valuation then $v$ is t-henselian. By Proposition \ref{P:nip-val-rcf}, $v$ is henselian.

If $K$ is neither separably closed nor real closed, as in the proof of Theorem \ref{T:indep-val=ip}, $K$ admits a definable $V$-topology inducing the given t-henselian V-topology. By Theorem \ref{T:indep-val=ip}, the given definable V-topology must also be t-henselian. 

Now assume that $(K,v)$ is henselian and $w$ is a non-henselian valuation on $K$ such that $(K,w)$ has NIP. By the above, $v$ and $w$ are dependent. Assume that they are incomparable and let $u$ be their join. Since $u$ is externally definable in $(K,w)$, $(Ku,\bar w)$ has NIP. Note that $\bar w$ is not henselian, but $Ku$ is henselian ($\bar v$ is henselian), so by the above $\bar v$ and $\bar w$ must be dependent. Contradicting the fact that $v$ and $w$ are incomparable.
\end{proof}

In particular, if $K$ is henselian and $(K,v)$ has NIP but not henselian then all henselian valuations are externally definable in $(K,v)$.

The above corollary allows us to prove the following:

\begin{corollary}\label{C:hen-conj-goes-up}
Let $K$ be a NIP field admitting a henselian valuation $v$ such that $Kv$ satisfies the henselianity conjecture. Then $K$ satisfies the henselianity conjecture.
\end{corollary}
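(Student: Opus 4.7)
The plan is as follows. Let $w$ be a valuation on $K$ with $(K,w)$ NIP; the goal is to show that $w$ is henselian. Since $K$ admits the henselian valuation $v$, the field $K$ is itself henselian, so the concluding part of Proposition~\ref{P:def-v-top-is-t-hen} applies and yields a dichotomy: either $w$ is already henselian (in which case we are done), or $w$ is finer than every henselian valuation on $K$; in particular, $v$ is a coarsening of $w$.

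I would then analyse the latter case by pushing the problem down to the residue field $Kv$. Any coarsening of $w$ is externally definable in $(K,w)$, so adding a predicate for $\mathcal{O}_v$ preserves NIP, and $(K,w,v)$ has NIP. The residue field $Kv$ carries the induced valuation $\bar w$ whose valuation ring is $\mathcal{O}_w/\mathcal{M}_v$ (recall that since $v$ is a coarsening of $w$, one has $\mathcal{M}_v\subseteq\mathcal{M}_w\subseteq\mathcal{O}_w$, so this quotient makes sense). The two-sorted structure $(Kv,\bar w)$ is interpretable in $(K,w,v)$, and hence has NIP.

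If $\bar w$ is trivial, then $\mathcal{O}_w=\mathcal{O}_v$, so $w$ coincides with $v$ and is henselian, contradicting the case we are in. Otherwise $\bar w$ is a non-trivial valuation on $Kv$ with $(Kv,\bar w)$ NIP, and the hypothesis that $Kv$ satisfies the henselianity conjecture yields that $\bar w$ is henselian. Since $w$ is the composition of $v$ (henselian on $K$) with $\bar w$ (henselian on $Kv$), and the composition of henselian valuations is henselian (a standard fact, e.g.\ \cite[Corollary 4.1.4]{EnPr}), we conclude that $w$ is henselian, as required.

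The only non-routine ingredient is the dichotomy provided by Proposition~\ref{P:def-v-top-is-t-hen}, which has already been established; the remainder of the argument amounts to a standard passage to the residue field together with the composition principle for henselian valuations, so I do not anticipate any substantive obstacle.
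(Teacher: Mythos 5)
Your proof is correct and follows essentially the same route as the paper: the dichotomy from Proposition \ref{P:def-v-top-is-t-hen}, external definability of the coarsening $v$ in $(K,w)$ so that $(Kv,\bar w)$ has NIP, and then the henselianity conjecture on $Kv$ combined with the composition principle for henselian valuations. The extra care you take with the trivial-$\bar w$ case and the explicit description of $\mathcal{O}_{\bar w}$ is harmless but not needed.
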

\begin{proof}
Let $w$ be a valuation on $K$ with $(K,w)$ NIP. If $w$ is not henselian then by Proposition \ref{P:def-v-top-is-t-hen}, $w$ must be finer than $v$. Since $v$ is then externally definable in $(K,w)$, $(Kv,\bar w)$ has NIP. Using the fact that $Kv$ satisfies the henselianity conjecture, $\bar w$ is henselian and consequently $w$ is henselian as well.
\end{proof}

Note that this implies that any Hahn field over a field satisfying the henselianity conjecture, satisfies the henselianity conjecture. 


In the following we will show that every infinite dp-minimal field satisfies the henselianity conjecture for NIP fields and thus giving a wide class of examples satisfying the statement of the previous proposition.
\begin{corollary}\label{C:hen-con-dpmin-res}
Every dp-minimal field satisfies the henselianity conjecture for NIP fields.

Consequently, if $K$ is a NIP field admitting a henselian valuation with dp-minimal residue field then $K$ satisfies the henselianity conjecture.
\end{corollary}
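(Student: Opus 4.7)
The second assertion will follow immediately from the first together with Corollary~\ref{C:hen-conj-goes-up}: given a NIP field $K$ admitting a henselian valuation with dp-minimal residue field, applying the first assertion to the residue field shows that it satisfies the henselianity conjecture, and Corollary~\ref{C:hen-conj-goes-up} then transfers this to $K$. The main task is therefore to prove the first assertion.

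To prove the first assertion I will fix a dp-minimal field $K$ and a valuation $v$ on $K$ with $(K,v)$ NIP, and show $v$ is henselian. The cases where $K$ is algebraically closed or real closed are immediate---the former because every valuation on an algebraically closed field is henselian, the latter from Proposition~\ref{P:nip-val-rcf}. Assuming henceforth that $K$ is neither, Johnson's theorem for dp-minimal fields \cite{johnsonp} supplies a canonical non-trivial henselian valuation $u$ on $K$ that is definable in the pure language of rings, and the classification of dp-minimal valued fields (see \cite{johnsonp,JSW}) pins down $Ku$ as elementarily equivalent to an algebraically closed field, a real closed field, or a finite field. In particular $K$ is henselian, and since $u$ is definable in pure $K$ it is also definable in $(K,v)$.

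The heart of the argument is to apply Proposition~\ref{P:def-v-top-is-t-hen} to the NIP structure $(K,v)$: it yields that either $v$ is henselian, in which case we are done, or $v$ is strictly finer than every henselian valuation on $K$, and hence strictly finer than $u$. In the latter case $u$ is a non-trivial coarsening of $v$, so is externally definable in $(K,v)$, making $(K,v,u)$ NIP and consequently $(Ku,\bar v)$ NIP, where $\bar v$ denotes the valuation induced by $v$ on $Ku$. Inspecting the three possibilities for $Ku$: if $Ku$ is finite, $\bar v$ must be trivial, forcing $\mathcal{O}_v = \mathcal{O}_u$ and contradicting strictness; if $Ku$ is algebraically closed, $\bar v$ is automatically henselian; if $Ku$ is real closed, $\bar v$ is henselian by Proposition~\ref{P:nip-val-rcf}. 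In the two non-finite cases $v = u \circ \bar v$ is a composition of henselian valuations, hence henselian, contradicting the standing case hypothesis. Thus $v$ is henselian in every case.

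The crux of this plan --- and the step where I expect the main obstacle --- is invoking Johnson's classification in the specific form that forces $Ku$ to be algebraically closed, real closed, or finite. Without such structural input, Proposition~\ref{P:def-v-top-is-t-hen} would only reduce the problem on $K$ to an a priori identical one on the dp-minimal residue field $Ku$, so the closure of the argument rests on having a concrete description of $Ku$ that cuts off a potential recursion.
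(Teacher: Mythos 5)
Your proof is correct and takes essentially the same route as the paper: the structural fact that a dp-minimal field admits a (possibly trivial) henselian valuation with algebraically closed, real closed or finite residue field, combined with Proposition \ref{P:def-v-top-is-t-hen} --- you have simply inlined the proof of Corollary \ref{C:hen-conj-goes-up}, which the paper invokes directly at this point. The only cosmetic omission is the trivial case where $K$ itself is finite (hence neither algebraically closed nor real closed but with no non-trivial valuation), which satisfies the henselianity conjecture vacuously.
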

\begin{proof}
Let $K$ be a dp-minimal field. We first note that $K$ admits a henselian (possibly trivial) valuation with a residue field which is either algebraically closed, real closed or finite. The proof of this fact is a direct consequence of a variant of \cite[Theorem 9.7.2]{johnson} which does not use the saturation requirement (see \cite[Remark 3.2]{HaHaJa}). Alternatively, one may use Proposition \ref{P:jahnke}(1) and the fact that Shelah's conjecture for dp-minimal fields is true.

If $v$ is trivial then we are done, otherwise we may use Corollary \ref{C:hen-conj-goes-up}.
%
\end{proof}

\section{Shelah's Conjecture Implies Henselianity Conjecture}
Shelah's Conjecture on NIP fields states that
\begin{conjecture}\label{Shelah}
	Any infinite NIP field which is neither real closed nor separably closed admits a non-trivial henselian valuation. 
\end{conjecture}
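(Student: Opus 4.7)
The final statement is the conjecture motivating the entire paper, and it is open in full generality; so the ``proof proposal'' below is better read as a program describing the steps to which the machinery developed above reduces the conjecture, together with an honest indication of where the remaining gap sits. The only known case is dp-minimal, due to Johnson \cite{johnsonp} (and independently for the topological half by Jahnke--Simon--Walsberg in \cite{JSW}); the plan is to try to bootstrap from this case using the tools of Sections 2--4.

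My first step would be to reduce the problem to producing a single definable V-topology. Combining Proposition \ref{P:defV=extdef} (every $\aleph_0$-saturated elementarily $V$-topological field carries an externally definable valuation inducing its topology), Proposition \ref{P:def-v-top-is-t-hen} (any definable $V$-topology on a $t$-henselian NIP field is $t$-henselian), and the definability result \cite[Theorem 5.2]{JahKoe} for fields which are neither separably closed nor real closed, the conjecture reduces to the following statement: every infinite NIP field $K$ which is neither separably closed, real closed, nor contained in $\overline{\mathbb F}_p$ admits at least one definable $V$-topology. This reduction is essentially the content of Corollary \ref{C:reform-shelah} in the sequel.

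The second and main step is therefore to construct such a topology from NIP combinatorics alone. My approach would be to generalise Johnson's dp-minimal construction: fix a sufficiently saturated elementary extension, and try to define a $0$-neighbourhood base as the intersection closure of a carefully chosen uniformly definable family of ``small'' subsets of $K$, where smallness is controlled by alternation bounds coming from NIP (Baldwin--Saxl, honest definitions, existence of generically stable measures, or suitable pro-definable objects of low dp-rank). Verification of the $V$-topology axioms would then reduce to ruling out the degenerate cases in which the intersection is trivial or coincides with a translate of an additive subgroup; Theorem \ref{T:indep-val=ip} guarantees that any two such candidate topologies must agree, so canonicity is automatic once existence is established.

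The main obstacle is precisely this existence step outside of dp-minimal or $t$-henselian hypotheses. The rest of the program is comparatively smooth: Corollary \ref{C:hen-conj-goes-up} shows the henselianity conjecture lifts well through residue fields, Theorem \ref{T:t-h} controls the coarsening structure, and Section 5's topological rigidity rules out competing topologies. Nevertheless, extracting a non-trivial $V$-topology from NIP combinatorics alone in arbitrary dp-rank appears to require genuinely new ideas beyond those developed here, and this is where any attempt at a proof along these lines will stall.
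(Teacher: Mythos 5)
The statement you were asked about is Conjecture \ref{Shelah} itself, which the paper states as an open conjecture and does not prove; there is no proof in the paper to compare against. Your assessment is accurate: you correctly identify the statement as open, and the reductions you describe (to producing a single definable V-topology, via Proposition \ref{P:defV=extdef}, Theorem \ref{T:indep-val=ip}, and the definability results of \cite{JahKoe}) match the paper's own reformulations in Corollary \ref{C:reform-shelah} and Proposition \ref{P:jahnke}, with the genuine gap located exactly where the paper leaves it, namely the existence of a non-trivial definable V-topology on an arbitrary infinite NIP field outside the dp-minimal and t-henselian cases.
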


\begin{remark}
There are corresponding conjectures for fields of finite dp-rank and for strongly dependent fields, just replace separably closed by algebraically closed.
\end{remark}

\begin{proposition}\label{P:jahnke}
Shelah's conjecture implies the following:
\begin{enumerate}
\item Any infinite NIP field $K$ admits a henselian valuation $v$ (possibly trivial) such that $Kv$ is real closed, separably closed or finite. 
\item Any infinite NIP field which is neither real closed nor separably closed admits a non-trivial definable henselian valuation. 
\item Any NIP non-trivially valued field admits a non-trivial definable henselian valuation.
\end{enumerate}
\end{proposition}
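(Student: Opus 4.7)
I would prove the three statements in order, using the canonical henselian valuation $v_K$ for (1), the Jahnke--Koenigsmann definability theorem for (2), and Proposition~\ref{P:shelah-implies-hens} for (3).

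For (1), take $v := v_K$, the canonical henselian valuation on $K$. By the defining property of $v_K$, the residue field $Kv_K$ admits no non-trivial henselian valuation, and it is NIP because interpretable in $K$. If $Kv_K$ is finite we are done; if infinite, Shelah's conjecture forces it to be separably closed or real closed (the third option being excluded by the defining property of $v_K$). Hence $Kv_K$ is in all cases separably closed, real closed, or finite.

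For (2), let $K$ be an infinite NIP field that is neither real closed nor separably closed. By Shelah's conjecture, $K$ admits a non-trivial henselian valuation. The desired definable henselian valuation is then provided by the Jahnke--Koenigsmann theorem (\cite[Theorem~5.2]{JahKoe}; cf.\ \cite[Theorem~6.19]{dupont}), which guarantees that any non-separably-closed, non-real-closed field admitting a non-trivial henselian valuation admits a non-trivial $\emptyset$-definable henselian valuation in the language of rings.

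For (3), let $(K,v)$ be an NIP valued field with $v$ non-trivial. By Proposition~\ref{P:shelah-implies-hens}, Shelah's conjecture implies the henselianity conjecture, hence $v$ is henselian; since $v$ already belongs to the language of $(K,v)$, it is itself a non-trivial $\emptyset$-definable henselian valuation on $K$. The main obstacle in the whole argument is (2): passing from the mere existence of a non-trivial henselian valuation (supplied by Shelah) to a definable one. The hypotheses ``not separably closed'' and ``not real closed'' cannot be dispensed with, as is witnessed by non-trivial valuations on algebraically closed fields and by non-archimedean real closed fields, and this upgrade is precisely the content of the cited Jahnke--Koenigsmann theorem.
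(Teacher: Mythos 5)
There are genuine gaps in (1) and (2). In (1) you assert that $Kv_K$ is NIP ``because interpretable in $K$''. It is not: the residue field of a valuation is interpretable in the \emph{valued} field $(K,v_K)$, not in the pure field $K$, and an arbitrary henselian valuation on a NIP field need not be definable, so NIP does not automatically pass to $Kv_K$. This is exactly the point where the paper invokes \cite[Theorem A]{JahNIP}: after reducing to the case where every non-trivial henselian valuation on $K$ has non-separably closed residue field (otherwise such a valuation already witnesses (1)), $v_K$ is the finest henselian valuation with non-separably closed residue field, Theorem A makes it \emph{externally} definable, hence $(K,v_K)$ --- and therefore $Kv_K$ --- is NIP, and only then can Shelah's conjecture be applied to the residue field. (Your description of the ``defining property'' of $v_K$ is also inaccurate when some henselian valuation has separably closed residue field, since then $Kv_K$ is separably closed and may well carry further henselian valuations; that case is harmless, but it is another reason the case split cannot be avoided.)

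In (2), the theorem you attribute to Jahnke--Koenigsmann is not what \cite[Theorem 5.2]{JahKoe} (or \cite[Theorem 6.19]{dupont}) says: those results produce a definable valuation \emph{inducing the t-henselian topology}, and that valuation need not itself be henselian. The statement you actually use --- every field which is neither separably closed nor real closed and admits a non-trivial henselian valuation admits a non-trivial definable \emph{henselian} valuation --- is false in general; Jahnke--Koenigsmann exhibit henselian fields with no definable non-trivial henselian valuation. The paper's route to (2) goes through (1): once one has a henselian valuation whose residue field is real closed, separably closed or finite, definability follows from \cite[Theorem 3.10, Corollary 3.11]{JahKoe2015} in the first two cases and from the finite-residue-field criterion in the last. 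Finally, in (3) you invoke Proposition \ref{P:shelah-implies-hens}, whose proof in the paper itself rests on Proposition \ref{P:jahnke}; this is salvageable only because that proof uses parts (1)--(2) and not (3), and you should make this order of dependence explicit. The paper instead argues (3) directly: in the separably closed and real closed cases $v$ itself is henselian (the latter by Proposition \ref{P:nip-val-rcf}) and lies in the language, and otherwise one applies (2).
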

\begin{remark}
Similar proofs give the same results for strongly dependent fields and fields of finite dp-rank.
\end{remark}
\begin{proof}
\begin{enumerate}
\item We may assume that $K$ is neither separably closed nor real closed (otherwise take $v$ to be the trivial valuation). We may assume that every non-trivial henselian valuation on $K$ has non separably closed residue field.
	By \cite[Theorem 4.4.2]{EnPr}, there exists a finest henselian valuation with non separably closed residue field, the canonical valuation $v_K$. It is non trivial since $K$ admits a non-trivial henselian valuation by Shelah's conjecture. Notice that $Kv_K$ is non henselian. Indeed, if it were henselian then composing the corresponding place with $K\to Kv_K$ would yield a henselian valuation on $K$ strictly finer than $v_K$, contradiction.
	 Since $Kv_K$ is not separably closed, by \cite[Theorem A]{JahNIP} $v_K$ is externally definable in $K$. As a result, $(K,v_K)$ has NIP and so does $Kv_{K}$. Applying Conjecture \ref{Shelah} again, we get that it is finite or real closed. 
	 
\item Let $K$ be an infinite NIP field which is not real closed nor separably closed. Using (1), we are supplied with a, necessarily non-trivial, henselian valuation $v$ such that $Kv$ is real closed, separably closed or finite. If $Kv$ is real closed or separably closed then, by \cite[Theorem 3.10, Corollary 3.11]{JahKoe2015}, $K$ admits a non-trivial definable henselian valuation. If $Kv$ is finite then $v$ is definable in $K$ by \cite[Example after Proposition 3.1]{JahKoe2015}.
\item Let $(K,v)$ be a NIP valued field and assume that $v$ is non-trivial. If $K$ is separably closed then $v$ is henselian, if it is real closed $v$ is henselian by Proposition \ref{P:nip-val-rcf}. Since $K$ can not be finite ($v$ is non-trivial), by (2), either way $(K,v)$ admits a non-trivial definable henselian valuation. 
\end{enumerate}
\end{proof}

The following is a corollary of Proposition \ref{P:jahnke} and Corollarys \ref{C:hen-conj-goes-up}. We give a short alternative proof. 

\begin{proposition}\label{P:shelah-implies-hens}
Shelah's conjecture implies that if $(K,v)$ is a NIP non-trivially valued field then $(K,v)$ is henselian. Shelah's conjecture for strongly dependent fields (fields of finite dp-rank) implies the henselianity conjecture for strongly dependent fields (fields of finite dp-rank). 
\end{proposition}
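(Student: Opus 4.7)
The plan is to derive the henselianity conjecture from Shelah's conjecture by chaining together three tools already in hand: Proposition \ref{P:jahnke}(1), which under Shelah's conjecture produces a henselian valuation with a well-behaved residue field; Proposition \ref{P:def-v-top-is-t-hen}, which constrains the shape of any NIP valuation on a henselian field; and Proposition \ref{P:nip-val-rcf}, which settles the real closed case.

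Let $(K,v)$ be a NIP non-trivially valued field; the goal is to show $v$ is henselian. First I dispose of the trivial endpoints: if $K$ is separably closed then every valuation on $K$ is henselian, and if $K$ is real closed then $v$ is henselian by Proposition \ref{P:nip-val-rcf}. Thus I may assume that $K$ is neither, which places $K$ in the scope of Shelah's conjecture. Proposition \ref{P:jahnke}(1) then supplies a (necessarily non-trivial) henselian valuation $u$ on $K$ whose residue field $Ku$ is separably closed, real closed, or finite.

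The key step is a proof by contradiction: suppose $v$ is not henselian. Since $K$ is henselian (via $u$) and $(K,v)$ is NIP, Proposition \ref{P:def-v-top-is-t-hen} forces $v$ to be finer than every henselian valuation on $K$, so in particular $u$ coarsens $v$. Being a coarsening, $u$ is externally definable in $(K,v)$, whence $(K,v,u)$ is NIP and the induced structure $(Ku,\bar v)$ on the residue field, where $\bar v$ is the valuation on $Ku$ induced by $v$, is NIP as well. Now $\bar v$ must be non-trivial, for otherwise $v = u$ would already be henselian; this rules out the possibility that $Ku$ is finite. If $Ku$ is separably closed then $\bar v$ is automatically henselian, and if $Ku$ is real closed then Proposition \ref{P:nip-val-rcf} applied to $(Ku,\bar v)$ forces $\bar v$ to be henselian. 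In either surviving case, $v$ is the composition of the two henselian valuations $u$ and $\bar v$, hence henselian --- the desired contradiction.

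For the strongly dependent and finite dp-rank versions, both properties are preserved under interpretations and under passage to the Shelah expansion (Section \ref{ss:NIP}), so the analogues of Propositions \ref{P:jahnke}(1), \ref{P:def-v-top-is-t-hen} and \ref{P:nip-val-rcf} carry over, and the same chain of deductions applies (with separably closed replaced by algebraically closed in the output of the appropriate Shelah-type conjecture). The principal subtlety --- rather than an outright obstacle --- is verifying that Proposition \ref{P:def-v-top-is-t-hen} is genuinely applicable, which requires $K$ to admit a non-trivial henselian valuation; this is exactly what Shelah's conjecture together with Proposition \ref{P:jahnke}(1) guarantees once the real closed and separably closed endpoints have been stripped away.
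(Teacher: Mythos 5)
Your proof is correct, and it is in fact the route the paper itself alludes to but does not write out: immediately before its proof the paper remarks that the proposition ``is a corollary of Proposition \ref{P:jahnke} and Corollary \ref{C:hen-conj-goes-up}'' and then gives ``a short alternative proof''. What you do is essentially Proposition \ref{P:jahnke}(1) combined with the argument of Corollary \ref{C:hen-conj-goes-up} inlined: get a henselian $u$ with residue field separably closed, real closed or finite, use the ``consequently'' clause of Proposition \ref{P:def-v-top-is-t-hen} to force a non-henselian $v$ to be strictly finer than $u$, pass to $(Ku,\bar v)$ via external definability of the coarsening, and settle the residue field by Proposition \ref{P:nip-val-rcf} (real closed) or automatically (separably closed), with the finite case excluded since $\bar v$ is non-trivial. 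The paper's written proof instead takes $\mathcal{O}_w$ to be the intersection of \emph{all} externally definable henselian valuation rings in $(K,v)$, proves comparability of $w$ and $v$ directly from Corollary \ref{C:hens-incompIP} (rather than through Proposition \ref{P:def-v-top-is-t-hen}), shows $Kw$ carries no definable henselian valuation, and then invokes Proposition \ref{P:jahnke}(2) to conclude $Kw$ is separably or real closed. Your version is arguably cleaner in that it avoids the delicate point of whether the intersection defining $\mathcal{O}_w$ is itself externally definable (which the paper must address in its first claim), at the cost of leaning on the heavier Proposition \ref{P:def-v-top-is-t-hen}; ultimately both arguments bottom out in Theorem \ref{T:indep-val=ip} via Corollary \ref{C:hens-incompIP}. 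Your closing remarks about the strongly dependent and finite dp-rank cases match the paper's treatment.
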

\begin{proof}
We prove the proposition for NIP valued field. A similar proof gives the finite dp-rank and strongly dependent cases.


We first observe that by Proposition \ref{P:jahnke}, $K$ admits a non-trivial definable henselian valuation.
Let $\mathcal{O}_w$ the intersection of all externally definable henselian valuation rings in $(K,v)$. Note that it is a valuation ring and henselian since any two of the intersectants are comparable, by Corollary \ref{C:hens-incompIP}. Denote by $w$ the valuation associated with $\mathcal{O}_w$.

\begin{claim}
$w$ and $v$ are comparable.
\end{claim}
\begin{proof}
Assume that they are incomparable. Denote by $\tilde{w}$, with valuation ring $\mathcal{O}_{\tilde{w}}$, their common coarsening. It is externally definable in $(K,v)$ and henselian. 

If there were no externally definable valuation rings between $\mathcal{O}_w$ and $\mathcal{O}_{\tilde{w}}$ it would mean that $\mathcal{O}_w$ is externally definable. Thus $(K,v,w)$ has NIP, contradicting Corollary \ref{C:hens-incompIP}.

Otherwise, let $\mathcal{O}_{w'}$ be an externally definable henselian valuation ring between the two of them. Consequently, $(K,v,w')$ has NIP and $w'$, $v$ are incomparable contradicting Corollary \ref{C:hens-incompIP}.
\end{proof}

\begin{claim}
There is no definable henselian valuation on $Kw$.
\end{claim}
\begin{claimproof}
If $Kw$ is non-separably closed then by \cite[Proposition 2.4]{JahNIP}, $w$ is externally definable in $(K,v)$. If there were a definable henselian valuation on $Kw$, the composition with $w$ would be an externally definable henselian valuation strictly finer than $w$, contradiction.

If $Kw$ is separably closed, there can be no definable valuation since $Kw$ is stable.
\end{claimproof}

If $\mathcal{O}_w\subseteq \mathcal{O}_v$ we are done since then $v$ is 
henselian. We will thus assume that $\mathcal{O}_v\subsetneq \mathcal{O}_w$, so $\mathcal{O}_w$ is externally definable and $(K,v,w)$ has NIP.

Consequently, $(Kw,\bar v)$ has NIP and $Kw$ admits no non-trivial definable henselian valuation. By Proposition \ref{P:jahnke}(2), $Kw$ must be separably closed or real closed. Either way, $\bar v$ must be henselian, where the real closed case follows from Proposition \ref{P:nip-val-rcf}.
\end{proof}

Recently Krapp-Kuhlmann-Leh\'ericy \cite{KKL} showed, using different methods, that Shelah's conjecture restricted to ordered strongly dependent fields is equivalent to the fact that every such field is almost real closed. Consequently, Shelah's conjecture for strongly dependent ordered fields implies, using Corollary \ref{C:hen-arc}, the henselianity conjecture for such fields.

\section{Non-Empty Interior and Henselianity}\label{S: NEIH}
Several theorems of the sort "model theoretic dividing line implies henselianity of valued fields'' (e.g., \cite{MMvdD} \cite{weaklyrcf} \cite{Guingona2014} \cite{johnson} and \cite{JSW})  use the model theoretic assumptions to deduce that every Zariski dense definable set has non-empty interior in the valuation topology. Recently, and independently from us, it was observed by Sinclair \cite{Peter} that for valued fields of finite dp-rank henselianity of the valued field $(K,v)$ is equivalent to: for every coarsening $v_\Delta$ of $v$, every infinite definable subset of $Kv_\Delta$ in the language $(Kv_\Delta,\bar v)$ has non empty interior with respect to the $\bar v$-adic topology.

The aim of this section is to expand this observation to valued fields with no model theoretic constraint, and give in the process a new (to the best of our knowledge) characterisation of henselianity. 

As the following example shows,  it is not true that in an infinite non-perfect NIP field an infinite definable set must have non empty interior. Consequently, the above topological condition must be adjusted. This will be achieved by using t-henselianity.

\begin{example}
Let $(K,v)$ be an infinite NIP imperfect valued field of characteristic $p$. 
Since $K^p$ is a definable subfield it is NIP and hence Artin-Schreier closed so by \cite[Theorem 1.11]{FVK-densepefect} if $K\setminus K^p$ had non-empty interior $K^p$ would intersect it, which is absurd.
\end{example}

The basic outline of the following proofs is well known. Our exposition owes much to \cite[Section 4]{JSW}. We use the following fact, see \cite[Lemma 3.9]{Guingona2014}, for instance, for a proof:
\begin{fact}\label{F:well-used}
Let $F$ be a field extension of $K$ and let $\alpha\in F\setminus K$ be algebraic over $K$. Let $\alpha=\alpha_1,\dots,\alpha_n$ be the conjugates of $\alpha$ over $K$ and let $g$ be given by:
\[g(X_1,\dots,X_{n-1},Y):=\prod_{i=1}^n\left(Y-\sum_{j=0}^{n-1}\alpha^j_iX_j\right).\]
Then $g\in K[X_0,\dots,X_{n-1},Y]$ and there are $G_0,\dots,G_{n-1}\in K[X_0,\dots,X_{n-1}]$ such that
\[g(X_0,\dots,X_{n-1},Y)=\sum_{j=0}^{n-1}G_j(X_0,\dots,X_{n-1})Y^j+Y^n.\]
Letting $G=(G_0,\dots,G_{n-1})$ we have:
\begin{enumerate}
\item If $\bar c=(c_0,\dots,c_{n-1})\in K^n$ and $c_j\neq 0$ for some $j$, then $g(\bar c,Y)$ has no roots in $K$;
\item There is a $\bar d=(d_0,\dots,d_{n-1})\in K^n$ such that $d_j\neq 0$ for some $j$ and $|J_G(\bar d)|\neq 0$.
\end{enumerate}
\end{fact}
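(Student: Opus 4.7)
The overall strategy is symmetric-function theory plus two Vandermonde computations. To see that $g$ has coefficients in $K$ and the stated monic form in $Y$, I would expand $g=\prod_{i=1}^n(Y-y_i)$ where $y_i:=\sum_{k=0}^{n-1}\alpha_i^k X_k$; the coefficient of $Y^j$ is $(-1)^{n-j}e_{n-j}(y_1,\dots,y_n)$. This is a symmetric polynomial in $\alpha_1,\dots,\alpha_n$ with coefficients in $\mathbb{Z}[\bar X]$, so by the fundamental theorem of symmetric functions it can be written in the elementary symmetric polynomials of the $\alpha_i$, which (up to sign) are the coefficients of the minimal polynomial of $\alpha$ over $K$. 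This gives $G_j\in K[\bar X]$ and the claimed decomposition of $g$.

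For (1), the roots of $g(\bar c,Y)$ in an algebraic closure of $K$ are precisely the values $y_i(\bar c)=\sum_{k=0}^{n-1}\alpha_i^k c_k$. Since $1,\alpha_i,\dots,\alpha_i^{n-1}$ is a $K$-basis of $K(\alpha_i)$, if some $c_k$ with $k\geq 1$ is nonzero then each $y_i(\bar c)$ lies in $K(\alpha_i)\setminus K$, so no root of $g(\bar c,Y)$ lies in $K$. (The literal reading ``some $c_j\neq 0$'' breaks when only $c_0$ is nonzero, in which case $y_i(\bar c)=c_0\in K$; the intended hypothesis must therefore be ``some $c_j\neq 0$ with $j\geq 1$'', and I would state this at the start.)

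Part (2) is where the main work lies. I would factor the map $G:K^n\to K^n$ as $\bar X\xrightarrow{L}\bar y\xrightarrow{\sigma}\bar G$, where $L$ is multiplication by the Vandermonde matrix $V=(\alpha_i^k)_{1\le i\le n,\,0\le k\le n-1}$ and $\sigma$ sends $\bar y$ to the signed tuple of elementary symmetric polynomials. The chain rule yields $|J_G(\bar d)|=\det V\cdot |J_\sigma(L\bar d)|$, and $|J_\sigma(\bar y)|=\pm\prod_{i<j}(y_j-y_i)$ is a standard computation. Both factors are nonzero provided the $\alpha_i$ and the values $y_i(\bar d)$ are pairwise distinct; choosing $\bar d=(0,1,0,\dots,0)$ gives $y_i(\bar d)=\alpha_i$, reducing everything to distinctness of the conjugates themselves. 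The main obstacle is therefore separability of $\alpha$ over $K$: if $\alpha$ is inseparable the $\alpha_i$ collide, $V$ becomes singular, and the Jacobian vanishes identically. In the intended applications of this lemma separability is a standing assumption (or one replaces $K(\alpha)$ by its separable part over $K$), and I would flag this explicitly at the outset of the proof.
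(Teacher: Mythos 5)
Your proof is correct and follows essentially the same route as the source the paper cites for this fact without reproving it (\cite[Lemma 3.9]{Guingona2014}, cf.\ also \cite[Lemma 4.3]{JSW}): the fundamental theorem of symmetric polynomials to get $G_j\in K[\bar X]$, the basis $1,\alpha_i,\dots,\alpha_i^{n-1}$ of $K(\alpha_i)/K$ for (1), and the factorization $|J_G|=\pm\det V\cdot\prod_{i<j}\bigl(y_i-y_j\bigr)$ through the Vandermonde matrix and the Jacobian of the elementary symmetric functions for (2), with your explicit witness $\bar d=(0,1,0,\dots,0)$ even avoiding any appeal to $K$ being infinite. Both of your caveats are well taken and are features of the statement rather than gaps in your argument: the hypothesis in (1) and (2) should read ``$c_j\neq 0$ for some $j\geq 1$'' (as in the cited source), and (2) does require $\alpha$ to be separable over $K$ so that the conjugates are distinct --- which holds in the paper's only application, Proposition \ref{P:some-approx}, where $L/K$ is a separable extension.
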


\begin{proposition}\label{P:some-approx}
Suppose $(K,v)$ is a valued field, and let $(L,v^h)$ be a henselian immediate separable algebraic extension of $(K,v)$. Furthermore, suppose that for every polynomial map $P:K^n\to K^n$ and $v$-open subset $U\subseteq K^n$ such that $|J_P(U)|\neq \{0\} $ the image $P(U)$ has non-empty interior.

Let $a\in L$ such that for any $\gamma\in v^hL=vK$ there is some $b\in K$ such that $v^h(b-a)\geq \gamma$. Then $a\in K$.
\end{proposition}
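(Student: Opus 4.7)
The plan is to assume for contradiction that $a \notin K$ and to derive a contradiction with Fact \ref{F:well-used}(1). Since $L/K$ is separable algebraic, $a$ has a separable minimal polynomial over $K$ of some degree $n \geq 2$. Apply Fact \ref{F:well-used} with $\alpha = a$ to obtain the conjugates $a = \alpha_1, \dots, \alpha_n$ of $a$, the polynomial $g(\bar X, Y) = Y^n + \sum_{j=0}^{n-1} G_j(\bar X) Y^j \in K[\bar X, Y]$, and a point $\bar d = (d_0, \dots, d_{n-1}) \in K^n$ with some $d_j \neq 0$ and $|J_G(\bar d)| \neq 0$.

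The first step is to produce a local inversion of $G$ at $\bar d$. Since $|J_G|$ is a polynomial non-vanishing at $\bar d$, it is non-vanishing on a $v$-open neighborhood of $\bar d$, which we may shrink further to avoid the origin. The hypothesis of the proposition (in the spirit of Proposition \ref{P:etale=non-empty interior}) forces $(K,v)$ to be $t$-henselian, and so by the inverse function theorem (Lemma \ref{L:PZ}) we may choose $v$-open sets $U \ni \bar d$ (still avoiding $0$) and $V \ni G(\bar d)$ such that $G\restriction U : U \to V$ is a homeomorphism.

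Now consider $y_0 := \sum_{j=0}^{n-1} a^j d_j \in L$. The factorization $g(\bar d, Y) = \prod_{i=1}^{n}\bigl(Y - \sum_j \alpha_i^j d_j\bigr)$ shows that $y_0$ is a root of $g(\bar d, Y)$, and $y_0 \notin K$, since otherwise $g(\bar d, Y)$ would have a $K$-root, contradicting Fact \ref{F:well-used}(1) (recall $\bar d \neq 0$). The approximability of $a$ transfers to $y_0$: for $b \in K$ with $v^h(b - a)$ sufficiently large, setting $y_b := \sum_j b^j d_j \in K$ and using the identity $a^j - b^j = (a-b)\sum_{k=0}^{j-1} a^k b^{j-1-k}$, one obtains $v^h(y_0 - y_b) \geq v^h(a-b) + C$ for some constant $C \in vK$ depending only on $a$ and $\bar d$, and hence $v^h(y_0 - y_b) \to \infty$ as $v^h(a-b) \to \infty$.

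Finally, perform polynomial division in $K[Y]$ to write $g(\bar d, Y) = (Y - y_b) Q_b(Y) + R_b$ with $Q_b$ monic of degree $n-1$ and $R_b = g(\bar d, y_b) = (y_b - y_0)\prod_{i\geq 2}\bigl(y_b - \sum_j \alpha_i^j d_j\bigr) \in K$. The first factor tends to $\infty$ in $v^h$-valuation while the remaining factors stay bounded, so $v^h(R_b) \to \infty$. The polynomial $\tilde g_b(Y) := (Y - y_b) Q_b(Y) = g(\bar d, Y) - R_b$ has $y_b \in K$ as a root, and its coefficient vector differs from $G(\bar d)$ only in the constant term, by $-R_b$; hence for $b$ close enough to $a$, this vector lies in $V$. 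By the local inversion from the first step, there is $\bar d' \in U$ with $g(\bar d', Y) = \tilde g_b(Y)$, which has $y_b \in K$ as a root; since $\bar d' \in U$ forces $\bar d' \neq 0$, this contradicts Fact \ref{F:well-used}(1). The main obstacle is the first step, namely extracting the local homeomorphism conclusion for $G$ at $\bar d$ from the ``non-empty interior'' hypothesis of the proposition; the rest of the argument is a Duret-style algebraic manipulation combined with the transferred approximation for $y_0$.
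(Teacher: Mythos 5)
Your overall skeleton is the right one — it is the Duret-style perturbation argument that the paper (following \cite[Proposition 4.4]{JSW}) also uses: take $g$, $G$, $\bar d$ from Fact \ref{F:well-used}, transfer the approximability of $a$ to $y_0=\sum_j a^jd_j$, subtract $g(\bar d,y_b)$ from the constant coefficient to manufacture a nearby polynomial with a root in $K$, and contradict Fact \ref{F:well-used}(1). Steps 4--6 of your write-up are fine.

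The genuine gap is your first step. You claim that the hypothesis of the proposition ``forces $(K,v)$ to be t-henselian'' via Proposition \ref{P:etale=non-empty interior}, and you then use Lemma \ref{L:PZ} to get open sets $U\ni\bar d$ and $V\ni G(\bar d)$ with $G\restriction U$ a homeomorphism onto $V$. But the hypothesis of Proposition \ref{P:etale=non-empty interior} is strictly stronger than the hypothesis you are given: it requires that $f(U)$ contain an open neighbourhood \emph{of the point $f(\bar a)$} at every \'etale point $\bar a$, whereas the proposition you are proving only assumes that $P(U)$ has non-empty interior \emph{somewhere}. This is exactly the difference between conditions (3) and (4) of Theorem \ref{T:t-h}, and the paper only closes that gap by quantifying over all coarsenings and going through the implication $(4)\Rightarrow(1)$ — whose proof uses the very proposition you are trying to prove, so invoking it here would be circular. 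As written, your step 6 needs $V$ to be a neighbourhood of $G(\bar d)$, and nothing in the hypothesis supplies that.

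The repair is what the paper does. Apply Lemma \ref{L:PZ} not to $(K,v)$ but to the henselization $(K^h,v^h)$, which is t-henselian for free; this gives $U^h\ni\bar d$ on which $G$ is a homeomorphism over $K^h$, and one sets $U:=U^h\cap K^n$ (shrunk to avoid $0$). The hypothesis, applied to $G$ and $U$ (legitimate since $|J_G(\bar d)|\neq 0$), yields only that $G(U)$ contains some open ball $B_\gamma(G(\bar e))$ with $\bar e\in U$, $\bar e\neq 0$ — with no control on where $\bar e$ sits relative to $\bar d$. One then runs your steps 4--6 centred at $\bar e$ instead of $\bar d$: put $y_0:=\sum_j a^je_j$, note $v^h\bigl(g(\bar e,y_b)\bigr)\to\infty$ as $v^h(b-a)\to\infty$, so for $b$ close enough to $a$ the coefficient vector of $g(\bar e,Y)-g(\bar e,y_b)$ lies in $B_\gamma(G(\bar e))\subseteq G(U)$, hence equals $G(\bar c)$ for some non-zero $\bar c\in U$, and $g(\bar c,Y)$ has the root $y_b\in K$, contradicting Fact \ref{F:well-used}(1). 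Your transfer of approximability and the valuation estimate for the remainder term work verbatim with $\bar e$ in place of $\bar d$, so the rest of your argument survives this recentring.
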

\begin{proof}
Suppose $a$ has degree $n$ over $K$ and let $g$ and $G$ be as in Fact \ref{F:well-used}. Let $\bar d$ be as in $(2)$ in Fact \ref{F:well-used}. By applying Lemma \ref{L:PZ} to $(K^h,v^h)$, the henselization of $(K,v)$, there is a $v$-open subset  $U^h\subseteq (K^h)^n$ containing $\bar d$ such that the restriction of $G$ (viewed as function in $K^h$) to $U^h$ is a homeomorphism. Thus the restriction of $G$ (as a function in $K$) to $U:=U^h\cap K^n$ is injective.  By assumption, $G(U)$ has non empty interior. 
We continue the proof exactly as in \cite[Proposition 4.4]{JSW}, and the result is that $a\in K$.

\end{proof}

To prove the main result of this section we will need the following variant of Taylor's theorem:

\begin{fact}\cite[Lemma 24.59]{FVK}\label{F:taylor}
Let $(K,v)$ be a valued field with valuation ring $\mathcal{O}_v$ and let $f\in \mathcal{O}_v[X]$ be a polynomial. Then there exists $H_f(X,Y)\in \mathcal{O}_v[X,Y]$ such that 
\[f(Y)-f(X)=f'(X)(Y-X)+(Y-X)^2H_f(X,Y).\]
\end{fact}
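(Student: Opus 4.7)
The plan is to reduce the statement to a polynomial division argument carried out entirely inside the ring $\mathcal{O}_v[X,Y]$. Set
\[
p(X,Y) := f(Y) - f(X) - f'(X)(Y-X) \in \mathcal{O}_v[X,Y],
\]
and view it as a polynomial in the variable $Y$ with coefficients in the subring $\mathcal{O}_v[X]$. Since $(Y-X)^2$ is monic of degree $2$ in $Y$ with coefficients in $\mathcal{O}_v[X]$, Euclidean division by it is available over this coefficient ring without inverting anything. This produces a quotient $H_f(X,Y) \in \mathcal{O}_v[X,Y]$ together with a remainder of $Y$-degree at most $1$, which I rewrite in the convenient form $r_0(X) + r_1(X)(Y-X)$ with $r_0, r_1 \in \mathcal{O}_v[X]$; so
\[
p(X,Y) = (Y-X)^2 H_f(X,Y) + r_0(X) + r_1(X)(Y-X).
\]

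The remaining step is to check that $r_0$ and $r_1$ vanish identically. Specializing $Y = X$ in the displayed identity gives $r_0(X) = p(X,X) = 0$ at once. Differentiating the displayed identity with respect to $Y$ and then setting $Y = X$ yields
\[
r_1(X) = \frac{\partial p}{\partial Y}(X,X) = f'(X) - f'(X) = 0.
\]
Combining these gives the desired decomposition with $H_f \in \mathcal{O}_v[X,Y]$.

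There is essentially no obstacle in this argument; the only subtle point worth emphasising is that everything takes place inside $\mathcal{O}_v[X,Y]$ rather than in $K[X,Y]$ — and this works precisely because the divisor is monic in the chosen variable $Y$, so no denominators from $K \setminus \mathcal{O}_v$ are ever introduced. As a sanity check, one may also verify the formula by $\mathcal{O}_v$-linearity, reducing to the monomials $f(X) = X^n$, where the explicit $H_f$ has $\mathbb{Z}$-coefficients coming from iterating the factorization $Y^k - X^k = (Y-X)(Y^{k-1} + \cdots + X^{k-1})$; this gives a more concrete but less conceptual alternative proof.
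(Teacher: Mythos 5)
Your proof is correct. The paper does not prove this statement itself---it is quoted as a Fact from \cite[Lemma 24.59]{FVK}---and your argument (monic Euclidean division by $(Y-X)^2$ over $\mathcal{O}_v[X]$, with the two remainder coefficients killed by evaluating at $Y=X$ and by formally differentiating in $Y$, both of which are characteristic-free) is the standard one; your monomial sanity check via $Y^k-X^k$ is essentially the Taylor/binomial-expansion proof one finds in the cited source.
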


\begin{theorem}\label{T:t-h}
Let $(K,v)$ be a valued field. The following are equivalent:
\begin{enumerate}
\item $(K,v)$ is henselian;
\item $(Kv_\Delta,\bar v)$ is t-henselian for every coarsening $v_\Delta$ of $v$.

\item For every coarsening $v_\Delta$ of $v$, polynomial map $P: (Kv_\Delta)^n\to (Kv_\Delta)^n$ and $a\in (Kv_\Delta)^n$ such that the Jacobian determinant is non zero at $a$ and $a\in U\subseteq (Kv_\Delta)^n$ where $U$ is a $\bar{v}$-open set, $P(U)$ has non-empty interior.
\item For every coarsening $v_\Delta$ of $v$, polynomial map $P: (Kv_\Delta)^n\to (Kv_\Delta)^n$ with Jacobian determinant not identically zero and $U\subseteq (Kv_\Delta)^n$ where $U$ is a $\bar v$-open set, $P(U)$ has non-empty interior.

\end{enumerate}
\end{theorem}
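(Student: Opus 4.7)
The plan is to establish the cycle $(1) \Rightarrow (2) \Rightarrow (3) \Leftrightarrow (4) \Rightarrow (1)$, with the substantive work concentrated in the final implication. For $(1) \Rightarrow (2)$, I would use the standard decomposition result for henselian valuations: if $v = \bar v \circ v_\Delta$ is henselian, then both $v_\Delta$ on $K$ and $\bar v$ on $Kv_\Delta$ are henselian (see \cite{EnPr}), and any henselian valued field is $t$-henselian in its valuation topology since it coincides with its own henselization.

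For $(2) \Rightarrow (3)$, apply Lemma \ref{L:PZ} inside each $(Kv_\Delta, \bar v)$: given a polynomial map $P$ and $a \in U$ with $|J_P(a)| \neq 0$, the inverse function theorem produces a $\bar v$-open neighbourhood $U_0 \subseteq U$ of $a$ on which $P$ is a homeomorphism onto a $\bar v$-open set, so $P(U) \supseteq P(U_0)$ has non-empty interior. The equivalence $(3) \Leftrightarrow (4)$ is formal: the direction $(4) \Rightarrow (3)$ is immediate, while $(3) \Rightarrow (4)$ rests on the fact that in any $V$-topological field a non-zero polynomial cannot vanish identically on a non-empty open subset of the affine space. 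This is proved by induction on the number of variables, using that a non-discrete Hausdorff field has infinite open sets while a non-zero univariate polynomial has only finitely many zeros. Thus if $|J_P|$ is not identically zero, some $a \in U$ has $|J_P(a)| \neq 0$, and $(3)$ yields non-empty interior for $P(U)$.

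The heart of the proof is $(4) \Rightarrow (1)$. Specialising $(4)$ to the trivial coarsening $v_\Delta$ (so that $(Kv_\Delta, \bar v) = (K,v)$) gives precisely the non-empty interior hypothesis of Proposition \ref{P:some-approx} in $(K,v)$. Let $(K^h, v^h)$ be the henselization of $(K,v)$; it is a henselian immediate separable algebraic extension of $(K,v)$, and $K$ is $v^h$-dense in $K^h$. For any $a \in K^h$ and any $\gamma \in vK = v^h K^h$, density furnishes some $b \in K$ with $v^h(b - a) \geq \gamma$, so Proposition \ref{P:some-approx} forces $a \in K$. Hence $K^h = K$, i.e.\ $(K,v)$ is henselian. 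The main obstacle lies precisely here: one must verify that the henselization genuinely meets all hypotheses of Proposition \ref{P:some-approx} (immediate, separable algebraic, density of $K$), and note that the non-trivial coarsenings play no further role in this implication, since $(4)$ applied at the trivial coarsening already captures everything needed.
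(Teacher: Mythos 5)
Your implications $(1)\Rightarrow(2)$, $(2)\Rightarrow(3)$ and $(3)\Leftrightarrow(4)$ are fine and essentially match the paper's argument (the paper likewise reduces $(3)\Rightarrow(4)$ to Zariski density of nonempty $\bar v$-open sets). The gap is in $(4)\Rightarrow(1)$, and it sits exactly at the point you flag as ``the main obstacle'': the claim that $K$ is $v^h$-dense in its henselization is false in general. Immediacy of $K^h/K$ only gives that for $a\in K^h\setminus K$ the set $v^h(a-K)$ has no maximal element; it does not give that this set is cofinal in $vK$. Density does hold when $v$ has rank one (then $K^h$ embeds into the completion), but fails in higher rank. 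For instance, take $(F,u)$ non-henselian of rank one (say $\mathbb{Q}$ with the $p$-adic valuation), let $K=F((t))$, let $w$ be the $t$-adic valuation and $v$ the composition of $w$ with $u$ on the residue field $F$. Then $(K,v)$ is not henselian, but any $a\in K^h$ lifting an element of $F^h\setminus F$ satisfies $w^h(a-b)=0$ for every $b\in K$ (their $w$-residues differ, as $b$ has residue in $F$ while $a$ does not), so $v^h(a-b)$ stays inside the proper convex subgroup corresponding to $uF$, and Proposition \ref{P:some-approx} cannot be applied to $(K,v)$ and $a$. So condition $(4)$ at the trivial coarsening alone does not suffice, and the quantification over all coarsenings is genuinely needed.

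This is precisely what the paper's proof of $(4)\Rightarrow(1)$ is organized around: assuming $v$ is not henselian, it produces a polynomial $p$ of a special form with no root in $K$ and a root $a\in K^h$ with $v^h(p'(a))=0$, forms the convex subgroup $\Delta$ generated by $S=\{v^h(b-a): b\in K,\ v^h(b-a)>0\}$, shows by a Newton-iteration argument (using Fact \ref{F:taylor} rather than Taylor expansion, to cover positive characteristic) that $S$ is cofinal in $\Delta$, and then applies Proposition \ref{P:some-approx} to $(Kv_\Delta,\bar v)$ --- this is where hypothesis $(4)$ at the nontrivial coarsening $v_\Delta$ enters --- to find $c\in K$ with $p(c)\in\mathcal{M}_{v_\Delta}$, contradicting the fact that $v_\Delta(p(b))=0$ for all $b$ with $v(b-a)>0$. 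Your proposal omits this entire reduction; to repair it you must reinstate the coarsening $v_\Delta$ and prove the cofinality of $S$ in $\Delta$, since the direct appeal to Proposition \ref{P:some-approx} over $(K,v)$ only covers the case $\Delta=vK$.
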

\begin{proof}
$(1)\implies (2)$. For every coarsening $v_\Delta$ of $v$, $(Kv_\Delta,\bar v)$ is henselian and in particular t-henselian.

$(2)\implies (3)$. Such a point $a$ is an \'etale point of $P$ so the result follows by Proposition \ref{P:etale=non-empty interior}.

$(3)\implies (4)$. Let $P: (Kv_\Delta)^n\to (Kv_\Delta)^n$ be a polynomial map with non-vanishing Jacobian determinant and $U\subseteq K^n$ a $\bar{v}$-open subset. Since the set of points of $K^n$ where the Jacobian determinant in non-zero is open, and $U$ is Zariski dense, there exists such a point in $U$.

$(4)\implies (1)$. We denote by $\mathcal{O}_v$ and $vK$ the valuation ring and value group of $v$, respectively. We suppose towards a contradiction that $v$ is not henselian. By \cite[Theorem 9.1(2)]{FVK}, there is a polynomial \[p(X)=X^n+c_{n-1}X^{n-1}+\sum_{i=0}^{n-2}c_iX^i\in \mathcal{O}_v[X]\] 
with $v(c_{n-1})=0, v(c_i)>0$ for all $i<n-1$, such that $p$ has no root in $K$. Let $a\in K^h$ be such that $p(a)=0$, $v^h(a-c_{n-1})>0$ and $v^h(p'(a))=0$, indeed, $0$ is the only multiple root of $\bar p(X)=X^{n-1}(X+\bar c_{n-1})$ (see  \cite[Theorem 9.1(2)]{FVK}). Consider the subset \[S:=\{v^h(b-a)\in vK:b\in K, v^h(b-a)>0\}\] of $vK$, and let $\Delta$ be the convex subgroup of $vK$ generated by $S$. Note that $\Delta\neq vK$, for otherwise, our assumption allows us to apply Proposition \ref{P:some-approx} to deduce that $a\in K$.

\begin{claim}
$S$ is cofinal in $\Delta$.
\end{claim}
\begin{claimproof}
The proof is basically the same as in \cite[Claim 5.12.1]{weaklyrcf}. The main difference is that instead of using Taylor's approximation theorem (which does not hold for positive characteristic) we apply Fact \ref{F:taylor}.

It suffices to prove that if $\gamma\in S$  (so $\gamma>0$) then there is some $c\in K$ satisfying $v^h(c-a)\geq 2\gamma$.  Let $\gamma\in S$ and let $b\in K$ such that $v^h(b-a)=\gamma$. Note that, by using Fact \ref{F:taylor} twice, there are $H_1,H_2\in K^h$ with $v^h(H_1),v^h(H_2)\geq 0$, such that
\[p(b)=p'(a)(b-a)+(b-a)^2H_1 \text{ and }\]
\[-p(b)=p'(b)(a-b)+(a-b)^2H_2.\]
As a result, $v^h(p'(b))=v^h(p'(a)+(b-a)(H_1+H_2))$ so, since $v^h(p'(a))=0$ and $v^h((b-a)(H_1+H_2))>0$, $v^h(p'(b))=v^h(p'(a))=0$. In particular $p'(b)\neq 0$. Using Fact \ref{F:taylor}, again, 
\[p(a)-p(b)=(a-b)p'(b)+(a-b)^2H,\] where $v^h(H)\geq 0$, and setting $c:=b-p(b)/p'(b)\in K$ we get that
\[c-a=(a-b)^2\frac{H}{p'(b)}.\] 

Consequently, since $v^h(p'(b))=0$, \[v^h(c-a)=2v^h((a-b)H)\geq 2\gamma.\]
\end{claimproof}

Let $v_\Delta$ be the coarsening of $v$ with valued group $\Gamma_v/\Delta$ and let $v^h_\Delta$ be the corresponding coarsening of $v^h$.  We let $\bar v$ be the non-trivial valuation induced on $Kv_\Delta$ by $v$.

\begin{claim}
There exists $c\in K$, with $v_\Delta(c-a)>0$,  such that $p(c)\in \mathcal{M}_{v_\Delta}$.
\end{claim}
\begin{claimproof}
By the definition of $\Delta$ and the first claim, the residue $a v^h_\Delta$ is approximated arbitrarily well in the residue field $Kv_\Delta$ (with respect to the valuation $\bar v$). Note that $(K^hv^h_\Delta, \bar{v^h})$ is a henselian immediate separable algebraic extension of $(Kv_\Delta,\bar v)$. 

Thus, by $(4)$,  we may apply Proposition \ref{P:some-approx} to $(Kv_\Delta,\bar v)$, to deduce that $a v^h_{\Delta}\in Kv_\Delta$. Take some $c\in K$ with the same residue (with respect to $v_\Delta$) as $a$. In particular, $cv_\Delta$ is a root of the polynomial $\bar p(X)$ (that is $p(X)$ considered in $Kv_\Delta$), as needed.
\end{claimproof}

We declare \[J:=\{b\in K: v(b-a)>0\}.\]
Then, as $c-a\in \mathcal{M}_{v_\Delta}\subseteq \mathcal{M}_v$, we have $c\in J$.
\begin{claim}
For all $b\in J$, $v^h(b-a)=v(p(b))$.
\end{claim}
\begin{claimproof}
Again, by Fact \ref{F:taylor}
\[p(b)=p'(a)(b-a)+(b-a)^2H,\] with $v^h(H)\geq 0$. Since $v^h(p'(a))=0$, and $0<v^h(b-a)<v^h((b-a)^2H)$, the assertion follows.
\end{claimproof}
By the definition of $\Delta$, $v_\Delta(p(b))=0$ for any $b\in J$. This contradicts $p(c)\in \mathcal{M}_{v_\Delta}$, and this finishes the proof.

\end{proof}

Using this result we may give a reformulation of Shelah's Conjecture.
\begin{corollary}\label{C:reform-shelah}
The following are equivalent:
\begin{enumerate}
\item Every infinite NIP field is either separably closed, real closed or admits a non-trivial henselian valuation.
\item Every infinite NIP field is either separably closed, real closed or admits a non-trivial definable henselian valuation.
\item Every infinite NIP field is either t-henselian or the algebraic closure of a finite field.
\item Every infinite NIP field is elementary equivalent to a henselian field.
\end{enumerate}
\end{corollary}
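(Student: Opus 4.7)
The plan is to prove the cycle $(1)\Rightarrow(2)\Rightarrow(3)\Rightarrow(4)\Rightarrow(1)$. The first implication $(1)\Rightarrow(2)$ is just Proposition~\ref{P:jahnke}(2), which has already done the work of upgrading henselian valuations to definable ones under Shelah's conjecture.

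For $(2)\Rightarrow(3)$ I would argue by cases on an infinite NIP field $K$. If $K$ carries a non-trivial definable henselian valuation, then the induced valuation topology is t-henselian. If $K$ is real closed, then $K$ is t-henselian: via the natural valuation in the non-archimedean case and via the archimedean absolute value otherwise. If $K$ is separably closed and $K\neq \overline{\mathbb{F}_p}$ for every prime $p$, then $K$ has transcendence degree at least $1$ over its prime field, hence carries a non-trivial valuation which, being on a separably closed field, is automatically henselian; thus $K$ is t-henselian. The remaining possibility $K=\overline{\mathbb{F}_p}$ is exactly the second disjunct of $(3)$.

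For $(3)\Rightarrow(4)$ I invoke \cite[Theorem~7.2]{PrZ1978}, by which every t-henselian field is elementarily equivalent to a henselian field. The exceptional case is handled directly: $\overline{\mathbb{F}_p}$ is elementarily equivalent to the algebraic closure of $\mathbb{F}_p(t)$, which admits any extension of the $t$-adic valuation and, being separably closed, is henselian.

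The main content lies in $(4)\Rightarrow(1)$. Fix an infinite NIP field $K$ and a henselian field $L$ with $K\equiv L$. If $K$ is separably closed or real closed we are done, so assume otherwise; both properties being elementary, $L$ too is neither separably closed nor real closed, and since NIP is elementary $L$ is NIP. Being henselian, $L$ is in particular t-henselian, so by \cite[p.~203]{Pralg} the t-henselian $V$-topology on $L$ is definable in the language of rings; since $L$ is not real closed, \cite[Theorem 5.2]{JahKoe} (alternatively \cite[Theorem 6.19]{dupont}) shows that this topology is induced by a definable valuation $v$ on $L$, which must then be henselian. Since ``$\varphi$ defines a non-trivial henselian valuation ring'' is a first-order statement, the formula defining $v$ on $L$ also defines a non-trivial henselian valuation on $K$, yielding $(1)$ (and in fact $(2)$). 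The main subtlety is the non-circularity of this last step: one must make sure that the definability results imported from Prestel, Jahnke--Koenigsmann and Dupont are themselves unconditional, not derived from Shelah's conjecture.
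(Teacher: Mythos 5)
Your implications $(1)\Rightarrow(2)$, $(2)\Rightarrow(3)$ and $(3)\Rightarrow(4)$ are fine and essentially match the paper's (which runs the cycle as $(2)\Rightarrow(1)\Rightarrow(4)\Rightarrow(3)\Rightarrow(1)$ plus $(1)\Rightarrow(2)$). The problem is $(4)\Rightarrow(1)$, where all the content of the corollary lives: your step asserting that the definable valuation $v$ on $L$ produced by \cite{JahKoe} or \cite{dupont} ``must then be henselian'' is a genuine gap. What you actually know is that $v$ induces the (unique, since $L$ is not separably closed) t-henselian topology on $L$, hence is \emph{dependent} with some henselian valuation $u$ on $L$; but dependence with a henselian valuation does not make $v$ henselian --- for instance any proper refinement of $u$ by a non-henselian valuation on $Lu$ induces the same topology as $u$ without being henselian, and the paper itself recalls (in the discussion preceding Proposition \ref{P:t-hen-algext}) the Prestel--Ziegler field carrying a definable valuation that induces a t-henselian topology yet has no henselian coarsening whatsoever. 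So transporting the defining formula of $v$ to $K$ only yields a definable valuation on $K$ inducing a t-henselian topology, i.e.\ statement (3), not statement (1). The subtlety is not, as you suggest, the unconditionality of the cited definability results; it is that ``admits a non-trivial henselian valuation'' is not elementary and cannot be transferred this way.

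The missing ingredient is the bridge from t-henselianity back to henselianity, which is exactly what Theorem \ref{T:t-h} together with Proposition \ref{P:def-v-top-is-t-hen} supply, and which is where NIP is genuinely used. In the paper's proof of $(3)\Rightarrow(1)$, one takes the definable valuation $v$ inducing the t-henselian topology on $K$ and verifies criterion (2) of Theorem \ref{T:t-h}: for every proper coarsening $w$ of $v$, the pair $(Kw,\bar v)$ is again an infinite NIP valued field, $Kw$ is t-henselian by (3), and Proposition \ref{P:def-v-top-is-t-hen} (resting on the uniqueness of definable V-topologies, Theorem \ref{T:indep-val=ip}) forces the definable topology of $\bar v$ to coincide with the t-henselian one; only then does Theorem \ref{T:t-h} give that $v$ is henselian. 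Your argument is repaired by replacing $(4)\Rightarrow(1)$ with $(4)\Rightarrow(3)$ (t-henselianity of a definable topology is elementary, \cite[Remark 7.11]{PrZ1978}) followed by this argument.
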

\begin{proof}
$(2)\implies (1)$. Straightforward.

$(1)\implies (4)$. Separably closed fields and real closed fields are elementary equivalent to henselian fields.

$(4)\implies (3)$. If $K$ is not the algebraic closure of a finite field it must admit a V-topology $\tau$. If $K$ is separably closed this topology is necessarily t-henselian so assume this is not the case. By \cite[Remark 7.11]{PrZ1978} and assumption $(4)$, $K$ admits a t-henselian V-topology.

$(3)\implies (1)$. Let $K$ be an infinite field which is not separably closed nor real closed. Since, by $(3)$, $K$ is t-henselian, by either \cite[Theorem 5.2]{JahKoe} or \cite[Theorem 6.19]{dupont}, $K$ admits a definable non-trivial t-henselian valuation $v$. We will show that $v$ is henselian using Theorem \ref{T:t-h}. Let $w$ be any proper coarsening of $v$. Since $(K,v)$ has NIP, so does $(Kw,\bar v)$ and since $w$ is a proper coarsening $Kw$ is infinite. By assumption $(3)$, $Kw$ is t-henselian. By Proposition \ref{P:def-v-top-is-t-hen}, $\bar v$ is t-henselian, as needed.

$(1)\implies (2)$. Proposition \ref{P:jahnke}(3).
\end{proof}

Since comparable valuations induce the same topology, if a valuation has a non-trivial henselian coarsening it is t-henselian. The other direction is false, e.g. in \cite[page 338]{PrZ1978} Prestel and Ziegler construct a t-henselian field (i.e. elementary equivalent to a field admitting a henselian valuation) of characteristic 0, which is neither real closed, algebraically closed nor does it admit a henselian valuation. Now, since the field is neither real closed nor separably closed it admits a definable valuation inducing the t-henselian topology by \cite[Theorem 5.2]{JahKoe} and \cite[Theorem 6.19]{dupont}. Since $K$ is not henselian this valuation can not have a henselian coarsening. However, in the spirit of Theorem \ref{T:t-h}, we prove the following related result:

\begin{proposition}\label{P:t-hen-algext}
Let $(K,v)$ be a valued field with $K$ not real closed. Then the following are 
equivalent:
\begin{enumerate}
\item $v$ has a non-trivial henselian coarsening;
\item For every algebraic extension $L/K$, any extension of $v$ to $L$ 
is t-henselian.
\end{enumerate}
\end{proposition}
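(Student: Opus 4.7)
The plan is as follows. For $(1)\Rightarrow(2)$, I would let $w$ be a non-trivial henselian coarsening of $v$ and observe that, for any algebraic $L/K$ and any extension $v^L$ of $v$ to $L$, the henselian valuation $w$ extends uniquely to a henselian valuation $w^L$ on $L$, which remains non-trivial and -- because coarsenings lift to extensions -- coarsens $v^L$. Two comparable non-trivial valuations induce the same V-topology, so $\tau_{v^L}=\tau_{w^L}$, which is t-henselian because $w^L$ is henselian; hence $v^L$ is t-henselian.

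For $(2)\Rightarrow(1)$, taking $L=K$ first yields that $v$ itself is t-henselian and in particular non-trivial. If $K$ is separably closed every valuation is henselian, so $v$ is a non-trivial henselian coarsening of itself. Otherwise $K$ is neither separably closed nor real closed, and by \cite[Theorem 5.2]{JahKoe} or \cite[Theorem 6.19]{dupont} there is a non-trivial \emph{definable} valuation $u$ on $K$ inducing the same topology as $v$. The $t$-henselian analogue of F.~K.~Schmidt's theorem (\cite[Section 7]{PrZ1978}) tells us that on a non-separably-closed field any two t-henselian V-topologies are dependent, so $u$ and $v$ are dependent as valuations and their common coarsening $w=u\vee v$ is non-trivial and coarsens $v$.

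It remains to show that $w$ is henselian. I would apply Theorem \ref{T:t-h} to $w$: it suffices to verify that $(Kw_\Delta,\bar w)$ is t-henselian for every coarsening $w_\Delta$ of $w$. Since every non-trivial coarsening of a valuation induces the same V-topology as the original, and $w_\Delta$ is simultaneously a coarsening of $v$, this task reduces to showing $(Kv_{\Delta'},\bar v)$ is t-henselian for every coarsening $v_{\Delta'}$ of $v$ that coarsens $w$. I would extract this from assumption (2) by a lifting argument in the spirit of the proof of $(4)\Rightarrow(1)$ of Theorem \ref{T:t-h}: a hypothetical failure of t-henselianity in such a residue $(Kv_{\Delta'},\bar v)$ would, after lifting an appropriate witness polynomial from the residue field to $\mathcal O_{v_{\Delta'}}$ and passing to a suitably chosen algebraic extension $L/K$, produce an extension of $v$ to $L$ whose induced V-topology fails t-henselianity, contradicting (2).

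The main obstacle is this final reverse-direction transfer: in Theorem \ref{T:t-h} a failure of henselianity in $(K,v)$ was transferred to a failure of the topological condition in residues of coarsenings, whereas here we need to transfer a failure at the residue level back to a failure in an algebraic extension. The mechanism should still function because a witness of non-t-henselianity in $Kv_{\Delta'}$ becomes, in a carefully chosen finite algebraic extension $L$ capturing the relevant roots and units, a witness at the level of $(L,v^L)$; controlling how the inherited V-topology behaves during this lift (using the interaction of coarsenings with henselizations along $L/K$) is the delicate point.
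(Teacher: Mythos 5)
Your direction $(1)\Rightarrow(2)$ is correct and is essentially the paper's argument: the henselian coarsening $w$ extends uniquely to $L$, the extension coarsens $v'$ by \cite[Lemma 3.1.5]{EnPr}, and comparability transfers t-henselianity. The problems are all in $(2)\Rightarrow(1)$.

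First, your target is wrong. You set out to prove that $w=u\vee v$ is henselian, where $u$ is a definable valuation inducing the same topology as $v$. But $u\vee v$ may well equal $v$ itself (nothing in the construction prevents $u$ and $v$ from being comparable), and the proposition does not claim that $v$ is henselian --- only that it has a non-trivial henselian coarsening. Indeed, if $v$ is a non-henselian proper refinement of a henselian valuation $w_0$, then $(1)$ and hence $(2)$ hold, yet $v$ (and so possibly your $w$) is not henselian. So the statement ``$w$ is henselian'' can be false under the hypotheses. The real content of the implication is to \emph{locate} the correct coarsening, and your construction does not do that.

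Second, even granting a correct target, the step you yourself flag as delicate is a genuine gap, and I do not see how to fill it along the lines you sketch. Hypothesis $(2)$ constrains extensions of $v$ to \emph{algebraic extensions} $L/K$; the hypothesis of Theorem \ref{T:t-h} concerns \emph{residue fields of coarsenings} $(Kv_{\Delta'},\bar v)$. These are very different objects: a residue field of a coarsening is a quotient, not a subfield of $K^{\mathrm{alg}}$, and a witness to failure of t-henselianity in $Kv_{\Delta'}$ (a polynomial with no root near the expected approximate root) lifts to a polynomial over $\mathcal O_{v_{\Delta'}}$ whose roots live in algebraic extensions of $K$ --- but roots existing in $L$ say nothing about roots existing in $Kv_{\Delta'}$ unless you already know some henselianity, which is circular. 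The paper's proof avoids this entirely by a Galois-theoretic route: choose a prime $p$ dividing $\#G_K$ (with $p\neq 2$ if $K$ is Euclidean), pass to the fixed field $L$ of a $p$-Sylow subgroup, use $(2)$ there to get t-henselianity, upgrade to $p$-henselianity via Koenigsmann (possible because $L$ has only $p$-extensions, so it contains the relevant roots of unity and $p$-henselianity coincides with henselianity on $L$), and then descend a suitably chosen henselian coarsening from $L$ to $K$ using \cite[Proposition 3.1(a)]{Koe99}. Some input of this Galois-theoretic kind seems unavoidable: the hypothesis is about algebraic extensions, and the only known bridges from ``t-henselian on all algebraic extensions'' to ``henselian'' go through $p$-henselianity and descent.
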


Before we proceed with the proof, we recall that a field $K$ is called \emph{Euclidean} if  $[K(2):K]=2$. Any Euclidean field is uniquely ordered, the positive elements being exactly the squares.

\begin{proof}
If $K$ is separably closed then any valuation on $K$ is henselian. We 
may thus assume that $K$ is neither real closed nor separably closed.

$(1)\implies (2)$. Let $w$ denote the henselian coarsening of $v$, $L/K$ 
be an algebraic extension and let $v'$ be any extension of $v$ to $L$. 
By \cite[Lemma 3.1.5]{EnPr} the unique extension $w'$ of $w$ to $L$
coarsens $v'$. Since $w$ is henselian, so is $w'$ and since $w'$ is a 
coarsening of $v'$, the latter is t-henselian.

$(2)\implies (1)$. We plan to use \cite[Proposition 3.1(a)]{Koe99}. For 
that, we will need some basic theory of the absolute Galois group, see e.g. 
\cite[Chapter 1, Sections 22.8 and 22.9]{FrJa}. Let $p$ be a prime 
number dividing $\# G_K$, the order of the absolute Galois group of $K$, 
and in case $K$ is Euclidean choose $p\neq 2$. Note that if $K$ is 
Euclidean and $2$ is the only prime number dividing $\# G_K$ then $K$ is 
necessarily real closed, contradicting our assumption. 

Let $L$ be the fixed field of some $p$-Sylow subgroup of $G_K$. Let $v'$ be any 
extension of $v$ to $L$. By assumption, $(L,v')$ is t-henselian. Note 
that since $L$ admits no Galois extensions of degree prime to $p$, L 
contains a primitive $p$th root of unity in case $\chr[L]\neq p$. Also, 
by construction, $L$ is not Euclidean in case $p=2$. Thus, as $L$ is elementary equivalent to a henselian field (and thus also to a p-henselian field, i.e. admitting a non-trivial $p$-henselian valuation) by \cite[Theorem 7.2]{PrZ1978}, by \cite[Corollary 2.2]{Koenigsmann}), $L$ 
is $p$-henselian. Note that since $L(p)=L^{sep}$, by choice of $L$, $p$-henselianity and henselianity (and hence also $t$-henselianity) coincide on $L$.
 
Consider the henselian valuation $u$ on $L$ which is defined as follows:
if either $p\neq 2$ or $p=2$ and $L$ admits no henselian valuation with real closed residue field, we take $u$ to
 be $v_L$, the canonical henselian valuation on $L$. Otherwise, let $u$ be the 
coarsest henselian valuation with real closed residue field 
(note that the latter exists and non-trivial by \cite[Proposition 2.1(iv)]{delonfarre} and since $L$ is not Euclidean and hence in particular not real closed).
Either way, $u$ coarsens $v_L$.

In particular, $u$ and $v'$ are both t-henselian (recall that $v'$ is 
t-henselian by assumption). Thus, they are dependent since otherwise $L$ 
would be separably closed by \cite[Theorem 7.9]{EnPr}. As a result, they 
have a common non-trivial henselian coarsening $w$.

In order to apply \cite[Proposition 3.1(a)]{Koe99} we must check that:
\begin{enumerate}
\item $w$ is a coarsening of the canonical henselian valuation $v_L$.
\item If $p=2$ and $Lw$ is real closed then $w$ has no proper coarsenings with real closed residue field.
\end{enumerate}

Since both these properties hold for $u$, they trivially follow for its 
coarsening $w$. Thus, by  \cite[Proposition 3.1(a)]{Koe99}, the 
restriction of $w$ to $K$ is a henselian valuation, it is non-trivial 
since $w$ is non-trivial. Finally, since $w$ coarsens $v'$, the restriction of $w$ to $K$ coarsens $v$.
\end{proof}

\bibliographystyle{plain}
\bibliography{nip-valued-fields}

\end{document}